\newcommand{\beq}{\begin{equation}}
\newcommand{\eeq}{\end{equation}}
\newcommand{\beqs}{\begin{equation*}}
\newcommand{\eeqs}{\end{equation*}}
\newcommand{\ba}{\begin{array}}
\newcommand{\ea}{\end{array}}
\newcommand{\beas}{\begin{eqnarray*}}
\newcommand{\eeas}{\end{eqnarray*}}
\newcommand{\bea}{\begin{eqnarray}}
\newcommand{\eea}{\end{eqnarray}}
\newcommand{\bal}{\begin{align}}
\newcommand{\eal}{\end{align}}
\newcommand{\bals}{\begin{align*}}
\newcommand{\eals}{\end{align*}}
\newcommand{\K}{\ensuremath{\mathbb K}}
\newcommand{\R}{\ensuremath{\mathbb R}}
\newcommand{\C}{\ensuremath{\mathbb C}}
\newcommand{\N}{\ensuremath{\mathbb N}}
\newcommand{\Z}{\ensuremath{\mathbb Z}}
\newcommand{\bigo}{\mathcal O}
\newcommand{\LL}{\mathcal L}
\newcommand{\iln}{L}
\newcommand{\inprod}[1]{\langle{#1}\rangle}
\newcommand{\doubleinprod}[1]{\langle\!\langle{#1}\rangle\!\rangle}
\newcommand{\bds}{\begin{displaystyle}}
\newcommand{\eds}{\end{displaystyle}}
\newcommand{\spec}{\mathfrak S}
\def\eqdef{\stackrel{\rm def}{=}}
\def\d{{\rm d}}
\newcommand{\bvec}[1]{\mathbf{#1}}
\def\vecx{\bvec x}
\def\vece{\bvec e}
\def\vecu{\bvec u}
\def\vecv{\bvec v}
\def\vecx{\bvec x}
\def\vecw{\bvec w}
\def\veck{\bvec k}
\def\varep{\varepsilon}
\renewcommand{\Re}{\operatorname{Re}}
\renewcommand{\Im}{\operatorname{Im}}
\def\ddt{\frac{\d}{\d t}}
\newcommand{\vkL}{\veck_{\mathbf L}}
\newtheorem{theorem}{Theorem}[section]
\newtheorem{lemma}[theorem]{Lemma}
\newtheorem{proposition}[theorem]{Proposition}
\newtheorem{definition}[theorem]{Definition}
\newtheorem{assumption}[theorem]{Assumption}
\theoremstyle{definition}
\newtheorem{example}[theorem]{Example}
\newtheorem{scenario}[theorem]{Scenario}
\newtheorem{remark}[theorem]{Remark}
\newcommand{\tnum}{\rm(\roman*)}
\newcommand{\rnum}{\rm(\alph*)}
\date{\today}
\numberwithin{equation}{section}
\title{Asymptotic expansions with subordinate variables for solutions of the Navier--Stokes equations}
\author{Luan Hoang}
\address{Department of Mathematics and Statistics,
Texas Tech University\\
1108 Memorial Circle, Lubbock, TX 79409--1042, U. S. A.}
\email{luan.hoang@ttu.edu}
\keywords{Navier--Stokes equations, long-time dynamics, asymptotic expansions, complicated expansions}
\subjclass[2020]{35Q30; 76D05; 35C20; 41A60}
\begin{document}

\begin{abstract} 
We study the three-dimensional Navier--Stokes equations in a periodic domain with the force decaying in time. Although the force has a certain coherent decay, as time tends to infinity, it can be too complicated for the previous theory of asymptotic expansions to be applicable. To deal with this issue, we systematically develop a new theory of asymptotic expansions containing the so-called subordinate variables which can be defined recursively. We apply it to obtain an asymptotic expansion for any Leray--Hopf weak solutions. The expansion, in fact, is constructed explicitly and the impact of the subordinate variables can be clearly specified. The complexifications of the Gevrey--Sobolev spaces, and of the Stokes and bilinear operators of the  Navier--Stokes equations are utilized to facilitate such a construction.
\end{abstract}

\maketitle


\tableofcontents
 
\pagestyle{myheadings}\markboth{L. Hoang}{Asymptotic expansions with subordinate variables for the Navier--Stokes equations}

\section{Introduction}\label{intro}

We study the long-time dynamics of viscous, incompressible fluids in the three-dimensional space.
We denote by 
$\vecx\in \R^3$ the spatial variables,  
$t\in\R$ the time variable, 
$\nu>0$,  the (kinematic) viscosity,
$\vecu(\vecx,t)\in\R^3$ the  velocity vector field, 
$p(\vecx,t)\in\R$ the pressure, 
and  $\mathbf f(\vecx,t)\in\R^3$ the body force. 
The Navier--Stokes equations (NSE) describing the fluid flows are 
\begin{align}\label{nse}
\begin{split}
&\bds \frac{\partial \vecu}{\partial t}\eds  + (\vecu\cdot\nabla)\vecu -\nu\Delta \vecu = -\nabla p+\mathbf f \quad\text{on }\R^3\times(0,\infty),\\
&\textrm{div } \vecu = 0  \quad\text{on }\R^3\times(0,\infty),
\end{split}
\end{align}
with the initial condition
\beq\label{ini}
\vecu(\vecx,0) = \vecu^0(\vecx),
\eeq 
where  $\vecu^0(\vecx)$ is a given divergence-free vector field. 
Throughout the paper, we use the  notation  
$u(t)=\vecu(\cdot,t)$, $f(t)=\mathbf f(\cdot,t)$ and $u^0=\vecu^0(\cdot).$ 

When the force is potential, i.e., $\mathbf f(\vecx,t)=-\nabla \phi(\vecx,t)$ for some scalar function $\phi$, 
Foias and Saut prove in  \cite{FS87}  that  any  non-trivial, regular solution
$u(t)$, in a bounded or periodic domain $\Omega$, admits an  asymptotic expansion (as $t\to\infty$)
\beq \label{expand}
u(t) \sim \sum_{n=1}^\infty q_n(t)e^{-\mu_nt}
\eeq
in Sobolev spaces $H^m(\Omega)^3$, for all $m\ge 0$. Here, the functions $q_n(t)$ are $C^\infty(\Omega)^3$-valued polynomials in $t$,
and $\mu_n>0$ is strictly increasing to infinity.
The reader is referred to \cite{FHS2} for a survey on the subject, or more specifically,
\cite{FS84a,FS84b,FS87,FS91} for the normalization map and invariant nonlinear manifolds associated with \eqref{expand}, 
\cite{FHOZ1,FHOZ2,FHS1} for the corresponding Poincar\'e--Dulac normal form,
\cite{FHN1,FHN2} for applications to the analysis of helicity, statistical solutions, and decaying turbulence.
The expansion \eqref{expand} is improved to hold in the Gevrey classes \cite{HM1}, and extended to include the decaying non-potential forces \cite{HM2,CaH1,CaH2,H6} with increasing level of complexity.
The newer expansions can be complicated containing the power or logarithmic or iterated logarithmic functions of time. Asymptotic expansions similar to \eqref{expand} are also established for the Lagrangian trajectories associated with the solutions of the NSE  \cite{H4}.
Furthermore, they are obtained for solutions of dissipative wave equations \cite{Shi2000}. 
This theory has been developed  for  general nonlinear ordinary differential equations (ODEs), specifically,   for  autonomous analytic systems \cite{Minea},  systems with non-smooth nonlinearity  \cite{CaHK1}, and for nonautonomous systems \cite{CaH3,H5,H10}.
 
Among the papers last mentioned, \cite{H10} is our particular interest.
In that article,  the asymptotic expansion, as $t\to\infty$,  at first cannot be obtained by the previous techniques in \cite{H5,H6,CaH3}. The reason behind this failure is that each of its possible terms may not be in a closed form.
Then a subordinate variable $\zeta\in\R$ is introduced to make it closed. With this variable $\zeta$, all decaying solutions admit a  new type of asymptotic expansion which is still meaningful and provides precise asymptotic approximations for the solutions.
The current work aims to  extend and develop that method systematically for the NSE.
We  explain the main ideas here.
The asymptotic expansion, as $t\to\infty$,  for the force $f(t)$ in the NSE and, later, any Leray--Hopf weak solution $u(t)$, will be of the form
\beq\label{fpol}
\sum_{n=1}^\infty \phi_n(t)\psi(t)^{-\mu_n}\text{ in some Gevrey--Sobolev space $G_{\alpha,\sigma}$,}
\eeq
where $\psi(t)=t$, or $\ln t$, or some iterated logarithmic function of $t$. 
Above, the main decaying modes are $\psi(t)^{-\mu_n}$, with positive numbers $\mu_n\nearrow \infty$.
Each function $\phi_n(t)$ is  $G_{\alpha,\sigma}$-valued and of the form 
\beq\label{phi}
\phi_n(t)=g_n\left (e^t,t,\ln t,\ln\ln t,\ldots,\mathcal Y_1(t),\mathcal Y_2(t),\mathcal Y_3(t),\ldots\right),
\eeq 
where
$g_n=g_n(z,\zeta)$
with $z$ being multi-variables  and  $\zeta=(\zeta_1,\zeta_2,\zeta_3,\ldots)$.
The variable $z$ is substituted by $(e^t,t,\ln t,\ln\ln t,\ldots)$ in \eqref{phi}.
The variables $\zeta_k$ are called subordinate variables and  have the following recursive substitutions 
$$\zeta_1=Z_1(z),
\quad  \zeta_2=Z_2(z,\zeta_1),
\quad \zeta_3=Z_3(z,\zeta_1,\zeta_2),
\quad \ldots.$$
The   substitutions in $t$ for $\zeta$ yield the functions $\mathcal Y_k(t)$ in \eqref{phi}, which are
\begin{align*}
\mathcal Y_1(t)&=Z_1(e^t,t,\ln t,\ln\ln t,\ldots),\quad
\mathcal Y_2(t)=Z_2(e^t,t,\ln t,\ln\ln t,\ldots,\mathcal Y_1(t)),\\
\mathcal Y_3(t)&=Z_3(e^t,t,\ln t,\ln\ln t,\ldots,\mathcal Y_1(t),\mathcal Y_2(t)),
\quad \ldots.
\end{align*}
Recall that  $\zeta$ in \cite{H10} is a single variable in $\R$ and the functions $g_n$ and $Z_k$ only have real number powers for $\zeta$. Here, $\zeta$ represents multi-variables and can have complex number powers in  $g_n$ and $Z_k$. Therefore, \eqref{fpol} and \eqref{phi} cover a much wider and more complicated  class of asymptotic expansions than those studied  in \cite{H10} and also \cite{CaH1,CaH3,H5,H6}.

The paper is organized as follows.
In Section \ref{Prelim} we recall the standard functional setting for the NSE including the Stokes operator $A$, the bilinear mapping $B$ and the Gevrey--Sobolev spaces $G_{\alpha,\sigma}$. We also review their correpsonding complexifications $A_\C$, $B_\C$ and $G_{\alpha,\sigma,\C}$ which are crucial in later statements and proofs.
In Section \ref{classtype}, we introduce the subordinate variables and, in Definition \ref{Hclass}, the main class of functions containing them.
A class of substitution functions for the subordinate variables are defined in Definition \ref{plusclass}.
Systems of subordinate variables and their substitutions are specified in Definition \ref{Zsys}. 
It is worthy to note that these variables, or more precisely, their substitution functions can be defined recursively.
In Section \ref{results}, we define  important linear operators $\mathcal M_j,\mathcal R,\mathcal Z_{A_\C}$  in Definitions \ref{defMR} and \ref{defZA}.
They are used, under the main Assumption \ref{B1},  to explicitly  construct in Definition \ref{construct} the   formal terms of the asymptotic expansion for  any Leray--Hopf weak solution $u(t)$.
The subordinate variables have a recursive effect which is expressed explicitly in the products
$W_j \partial q_\lambda/\partial \zeta_j$ in formula \eqref{chin} when $m_*=0$.
The main results are Theorems \ref{mainthm} and \ref{thm3} that establish the  asymptotic expansion for $u(t)$.
The former theorem is stated using complexified spaces.
The latter one only involves the standard real Gevrey--Sobolev spaces, but  the classes of functions and definition of the asymptotic expansions need be reformulated in Definitions \ref{realPL}--\ref{realex}.
In subsection \ref{example}, we illustrate the obtained results with some typical situations for the force $f(t)$ and some nontrivial examples.
Section \ref{prepsec} contains necessary preparations for the proofs of the main results.
Elementary  properties of the involved functions and linear operators are obtained in subsection \ref{mainprops}.
More importantly, we establish in Theorem \ref{Fode} an asymptotic approximation for solutions of  the linearized NSE. It is a key technical step to recursively obtain each term of the asymptotic expansion.
Finally the proofs of the main results are given in Section \ref{proofsec}. For Theorem \ref{mainthm}, 
 essential properties in Proposition \ref{qregpower} for the terms of the solutions' asymptotic expansion  are needed and hence proved. 
Theorem \ref{thm3} is proved by using Theorem \ref{mainthm}  and the conversions in Lemma \ref{convert} between corresponding functions, subordinate variables and asymptotic expansions.

\section{Backgrounds}\label{Prelim}

We  use the following notation throughout the paper. 
\begin{itemize}
 \item $\N=\{1,2,3,\ldots\}$ denotes the set of natural numbers, $\Z_+=\N\cup\{0\}$, and number $i=\sqrt{-1}$.

 \item For any vector $z\in\C^n$, the real part, respectively, imaginary part, conjugate vector, Euclidean norm, of $z$ is denoted by $\Re z$, respectively, $\Im z$, $\bar z$, $|z|$.
 
 \item Denote 
 $ \mathbf 0_n=(0,0,\ldots,0)\in \R^n$ and $\mathbf 1_n=(1,1,\ldots,1)\in \R^n.$
   
 \item Let $f$ and $h$ be non-negative functions on $[T_0,\infty)$ for some $T_0\in \R$. We write 
 $$f(t)=\bigo(h(t)), \text{ implicitly meaning as $t\to\infty$,} $$
 if there exist numbers $T\ge T_0$ and $C>0$ such that $f(t)\le Ch(t)$ for all $t\ge T$.
\end{itemize}

Let $S$ be a subset of $\C^n$. 
\begin{itemize}
\item We say $S$ preserves the addition if $x+y\in S$ for all $x,y\in S$.
\item We say $S$ preserves the conjugation if  $\overline z\in S$ for any $z\in S$.
\item When $n=1$, we say $S$ preserves the unit increment if $z+1\in S$ for all $z\in S$.
\end{itemize}

\subsection{The functional setting}\label{setting}

We recall the standard functional setting for the NSE in periodic domains, see e.g. \cite{CFbook, TemamAMSbook,TemamSIAMbook, FMRTbook}.
Let $\ell_1$, $\ell_2$, $\ell_3$ be fixed positive numbers. Denote $\mathbf L= (\ell_1,\ell_2,\ell_3)$, $\ell_*=\max\{\ell_1,\ell_2,\ell_3\}$, and the domain $\Omega=(0,\ell_1)\times(0,\ell_2)\times(0,\ell_3)$.
Let  $\{\vece_1,\vece_2,\vece_3\}$ be the standard basis of $\R^3$. A function $g(\vecx)$ is said to be $\Omega$-periodic if
\beqs
g(\vecx+\ell_j \vece_j)=g(\vecx)\quad \textrm{for all}\quad \vecx\in \R^3,\ j=1,2,3,\eeqs
 and is said to have zero average over $\Omega$ if 
\beqs
\int_\Omega g(\vecx)d\vecx=0.
\eeqs

In this paper, we  focus on the case when the force $\mathbf f(\vecx,t)$ and solutions $(\vecu(\vecx, t),p(\vecx, t))$  are $\Omega$-periodic.
By rescaling the variables $\vecx$ and $t$, we assume throughout, without loss of generality, that  $\ell_*=2\pi$ and $\nu =1$.

Let $L^2(\Omega)$ and $H^m(\Omega)=W^{m,2}(\Omega)$, for integers $m\ge 0$, denote the standard Lebesgue and Sobolev spaces on $\Omega$.
The standard inner product and norm in $L^2(\Omega)^3$ are denoted by $\inprod{\cdot,\cdot}$ and $|\cdot|$, respectively.
(We warn that this  notation  $|\cdot|$ also denotes the Euclidean norm in $\R^n$ and $\C^n$, for any $n\in\N$, but its meaning will be clear based on the context.)

Let $\mathcal{V}$ be the set of all $\Omega$-periodic trigonometric polynomial vector fields which are divergence-free and  have zero average over $\Omega$.  
Define
$$H, \text{ respectively } V,\ =\text{ closure of }\mathcal{V} \text{ in }
L^2(\Omega)^3, \text{ respectively } H^1(\Omega)^3.$$
Thanks to the zero average condition, it is standard to  use the following inner product on $V$
\beq\label{Vprod}
\doubleinprod{\vecu,\vecv}
 =\sum_{j=1}^3 \inprod{ \frac{\partial \vecu}{\partial x_j} , \frac{\partial \vecv}{\partial x_j} }\quad \text{for all } \vecu,\vecv\in V,
\eeq
and denote its corresponding norm by $\|\cdot\|$.
We use the following embeddings and identification
$$V\subset H=H'\subset V',$$
where each space is dense in the next one, and the embeddings are compact.

Let $\mathcal{P}$ denote the orthogonal (Leray) projection in $L^2(\Omega)^3$ onto $H$.
The Stokes operator $A$ is a bounded linear mapping from $V$ to its dual space $V'$ defined by  
$$\inprod{A\vecu,\vecv}_{V',V}=\doubleinprod{\vecu,\vecv}\text{ for all }\vecu,\vecv\in V.$$
As an unbounded operator on $H$, the operator $A$ has the domain $\mathcal D(A)=V\cap H^2(\Omega)^3$, and, under  the current consideration of periodicity condition, 
\beq\label{ADA} A\vecu = - \mathcal{P}\Delta \vecu=-\Delta \vecu\in H \quad \textrm{for all}\quad \vecu\in\mathcal D(A).
\eeq

Define a bounded bilinear mapping $B:V\times V\to V'$ by
\beqs
\inprod{B(\vecu,\vecv),\vecw}_{V',V}=b(\vecu,\vecv,\vecw)\eqdef \int_\Omega ((\vecu\cdot \nabla) \vecv)\cdot \vecw\, \d\vecx  \textrm{ for all } \vecu,\vecv,\vecw\in V.
\eeqs 
In particular,
\beq\label{BDA}
B(\vecu,\vecv)=\mathcal{P}((\vecu\cdot \nabla) \vecv), \quad \textrm{for all}\quad \vecu,\vecv\in\mathcal D(A).
\eeq
There exists a constant $d_*>0$ such that
\beq\label{Bweak}
\|B(u,v)\|_{V'}\le d_* \|u\|\,\|v\|\quad \forall u,v\in V.
\eeq

Then the problems  \eqref{nse} and \eqref{ini} can be rewritten in the functional form as (see, e.g.,  \cite{LadyFlowbook69,CFbook,TemamAMSbook,TemamSIAMbook})
\begin{align}\label{fctnse}
&\frac{\d u(t)}{\d t} + Au(t) +B(u(t),u(t))=f(t) \quad \text{ in } V' \text{ on } (0,\infty),\\
\label{uzero} 
&u(0)=u^0\in H.
\end{align}
More specifically, we will deal with the following type of weak solutions.

\begin{definition}\label{lhdef}
Let $f\in L^2_{\rm loc}([0,\infty),H)$.
A \emph{Leray--Hopf weak solution} $u(t)$ of \eqref{fctnse} is a mapping from $[0,\infty)$ to $H$ such that 
\beqs
u\in C([0,\infty),H_{\rm w})\cap L^2_{\rm loc}([0,\infty),V),\quad u'\in L^{4/3}_{\rm loc}([0,\infty),V'),
\eeqs
and satisfies 
\beq\label{varform}
\ddt \inprod{u(t),v}+\doubleinprod{u(t),v}+b(u(t),u(t),v)=\inprod{f(t),v}
\eeq
in the distribution sense in $(0,\infty)$, for all $v\in V$, and the energy inequality
\beqs
\frac12|u(t)|^2+\int_{t_0}^t \|u(\tau)\|^2\d\tau\le \frac12|u(t_0)|^2+\int_{t_0}^t \langle f(\tau),u(\tau)\rangle \d\tau
\eeqs
holds for $t_0=0$ and almost all $t_0\in(0,\infty)$, and all $t\ge t_0$.  
Here, $H_{\rm w}$ denotes the topological vector space $H$ with the weak topology.
 
A \emph{regular} solution is a Leray--Hopf weak solution that belongs to $C([0,\infty),V)$.

If $t\mapsto u(T+t)$ is a Leray--Hopf weak  solution, respectively, regular solution, then we say $u$ is a Leray--Hopf weak solution, respectively, regular solution on $[T,\infty)$.
\end{definition}

It is well-known that a regular solution is unique among all Leray--Hopf weak solutions.
Also, if $u(t)$ is a Leray--Hopf weak solution then it is a Leray--Hopf weak solution on $[T,\infty)$ for almost every $T\in(0,\infty)$.

\subsection{The real Gevrey--Sobolev spaces}\label{realsp}
We review the spectral Sobolev and Gevrey spaces based on the Stokes operator. There exist a complete orthonormal basis $(\vecw_n)_{n=1}^\infty$ of $H$, and a  sequence of positive numbers $(\lambda_n)_{n=1}^\infty$ so that 
\beq \label{lambn}
0<\lambda_1\le \lambda_2\le \ldots \le \lambda_n\le \lambda_{n+1}\le \ldots,
\quad \lim_{n\to\infty}\lambda_n=\infty,
\eeq
\beq \label{Awn}
A\vecw_n=\lambda_n \vecw_n \quad\forall n\in\N.
\eeq 
Note that each $\lambda_n$ is an eigenvalue of $A$ and has finite multiplicity.
We denote by $(\Lambda_n)_{n=1}^\infty$ the  strictly increasing sequence of the above eigenvalues $\lambda_n$. Then we still have $\Lambda_n\to\infty$ as $n\to\infty$.
Denote $\spec(A)=\{\lambda_n:n\in\N\}=\{\Lambda_n:n\in\N\}$, which is the spectrum of $A$.
It is known that
$\spec(A)=\{|\vkL|^2: \veck\in\Z^3, \veck\ne \mathbf 0\}$, where $\begin{displaystyle}\vkL=2\pi\left (k_1/\ell_1,k_2/\ell_2,k_3/\ell_3\right)\end{displaystyle}$ for 
 $\veck=(k_1,k_2,k_3)\in \Z^3$. 
It follows that $\lambda_1=1$.

For $\Lambda\in\spec(A)$, we denote by  $R_\Lambda$ the orthogonal projection from $H$ to the eigenspace of $A$ corresponding to $\Lambda$,
and set 
$$P_\Lambda=\sum_{\lambda\in \spec(A),\lambda\le  \Lambda}R_\lambda.$$ 
Recall that each linear space $P_\Lambda H$ is finite dimensional.

For $\alpha,s,\sigma\in\R$, define, for $\vecu=\sum_{n=1}^\infty c_n\vecw_n\in H$ with $c_n=\inprod{\vecu,\vecw_n}$,
\beq\label{Aee}
A^\alpha \vecu= \sum_{n=1}^\infty c_n\lambda_n^\alpha \vecw_n,\quad
e^{s A} \vecu= \sum_{n=1}^\infty c_n e^{s \lambda_n} \vecw_n,\quad
e^{\sigma A^{1/2}} \vecu= \sum_{n=1}^\infty c_n e^{\sigma \sqrt{\lambda_n}} \vecw_n.
\eeq
For $L=A^\alpha,e^{sA},e^{\sigma A^{1/2}}$, the corresponding formula in \eqref{Aee} is, in fact, defined on the domain $\mathcal D(L)=\{\mathbf u\in H:L\mathbf u\in H\}$.
(Even when $\alpha<0$ or $s<0$ or $\sigma<0$, we do not use the domains $\mathcal  D(L)$ as functional spaces larger than $H$ in this paper.)
The formulas in \eqref{Aee} can be formulated with the Fourier series by
\beqs
A^\alpha \vecu=\sum_{\veck\ne \mathbf 0} |\vkL|^{2\alpha} \vecu_\veck e^{i\vkL\cdot 
\vecx},\  
e^{s A} \vecu=\sum_{\veck\ne \mathbf 0} e^{s |\vkL|^2} \vecu_\veck e^{i\vkL\cdot 
\vecx},\  
e^{\sigma A^{1/2}} \vecu=\sum_{\veck\ne \mathbf 0} e^{\sigma 
|\vkL|} \vecu_\veck e^{i\vkL\cdot 
\vecx}
\eeqs 
 for  $\vecu(\vecx)=\sum_{\veck\ne \mathbf 0}\vecu_\veck e^{i\vkL\cdot \vecx}\in H$  with constant vectors $\vecu_\veck\in \C^3$.

Let $\alpha,\sigma\ge 0$. The  Gevrey--Sobolev spaces are defined by
\beqs
G_{\alpha,\sigma}=\mathcal D(A^\alpha e^{\sigma A^{1/2}} )\eqdef \{ \vecu\in H: |\vecu|_{\alpha,\sigma}\eqdef |A^\alpha 
e^{\sigma A^{1/2}}\vecu|<\infty\}.
\eeqs
Each $G_{\alpha,\sigma}$ is a real Hilbert space with the inner product
\beq\label{Gprod}
\inprod{u,v}_{G_{\alpha,\sigma}}=\inprod{A^\alpha e^{\sigma A^{1/2}}u,A^\alpha e^{\sigma A^{1/2}}v} \text{ for }u,v\in G_{\alpha,\sigma}.
\eeq

Note that $G_{0,0}=\mathcal D(A^0)=H$,  $G_{1/2,0}=\mathcal D(A^{1/2})=V$, $G_{1,0}=\mathcal D(A)$. The inner product in \eqref{Gprod} when $\alpha=\sigma=0$, respectively, $\alpha=1/2$, $\sigma=0$, agrees with $\inprod{\cdot,\cdot}$ on $H$, respectively,  $\doubleinprod{\cdot,\cdot}$ on $V$ indicated at the beginning of this subsection and \eqref{Vprod}. Subsequently,  $\|\vecu\|=|\nabla \vecu|=|A^{1/2}\vecu|$ for $\vecu\in V$.
Also, the norms $|\cdot|_{\alpha,\sigma}$ are increasing in $\alpha$, $\sigma$, hence, the spaces $G_{\alpha,\sigma}$ are decreasing in $\alpha$, $\sigma$.

We recall well-known inequalities that will be used throughout.
There exist positive numbers $d_0(\alpha,\sigma)$,  for $\alpha\ge 0$ and $\sigma>0$, such that
\beq\label{als0}
|A^\alpha e^{-\sigma A}v| \le d_0(\alpha,\sigma) |v|\quad \forall v\in H,
\eeq
\beq\label{als1}
|A^\alpha e^{-\sigma A^{1/2}}v| \le d_0(2\alpha,\sigma) |v|\quad \forall v\in H,
\eeq 
\beq \label{als}
|A^\alpha v|\le d_0(2\alpha,\sigma) |e^{\sigma A^{1/2}}v|\quad  \forall v\in G_{0,\sigma}.
\eeq

For estimates of the Gevrey norms $|B(u,v)|_{\alpha,\sigma}$,  we recall a convenient inequality  from \cite[Lemma 2.1]{HM1} which itself generalizes the case $\alpha=0$ in \cite{FT-Gevrey}.  
There exists a constant $c_*>1$ such that if $\sigma\ge 0$ and $\alpha\ge 1/2$, then 
\beq\label{AalphaB} 
|B(u,v)|_{\alpha ,\sigma }\le c_*^\alpha  |u|_{\alpha +1/2,\sigma } \, |v|_{\alpha +1/2,\sigma}\quad \forall u,v\in G_{\alpha+1/2,\sigma}.
\eeq

\subsection{Complexified spaces and mappings}\label{complex}
We will use the idea of complexification, see, e.g., \cite[section 77]{Halmos1974}.
 We review the material from Section 4 of \cite{H6}.

\subsubsection{General complexification}\label{cmplx}

Let $X$ be a linear space over $\R$. Its complexification is the set of elements $z=x+iy$ with $x,y\in X$, and is denoted by $X_\C$.
The addition and scalar multiplication on $X_\C$ are  naturally defined by
\begin{align*}
z+z'=(x+x')+i(y+y'),\quad
cz=(ax-by)+i(bx+ay),
\end{align*}
for $z= x+iy,z'=x'+iy'\in X_\C$ with $x,x',y,y'\in X$, and  $c=a+ib\in \C$ with $a,b\in\R$.
Then $X_\C$ is a linear space over $\C$ and $X\subset X_\C$. 
For $z=x+iy\in X_\C$, with $x,y\in X$,  its conjugate is defined by 
$\overline z=x-iy=x+i(-y).$
 Then
\beqs
 \overline{cz}=\overline c\, {\overline z} \text{ for all } c\in\C, z\in X_\C.
\eeqs 

Suppose $(X,\langle\cdot,\cdot\rangle_X)$ is an inner product  space over $\R$. Then the complexification $X_\C$ is an inner product space over $\C$ with the corresponding inner product $\langle\cdot,\cdot\rangle_{X_\C}$ defined by
\beqs
\langle x+iy,x'+iy'\rangle_{X_\C}=\langle x,x'\rangle_X+\langle y,y'\rangle_X+i(\langle y,x'\rangle_X-\langle x,y'\rangle_X) \text{ for }x,x',y,y'\in X.
\eeqs
Denote by $\|\cdot\|_X$ and $\|\cdot\|_{X_\C}$ the norms on $X$ and $X_\C$ induced  from their respective inner products. Then   
\beqs
\|x+iy\|_{X_\C}=(\|x\|_X^2+\|y\|_X^2)^{1/2}\text{ and } \|\overline z\|_{X_\C}=\| z\|_{X_\C} \text{  for all $x,y\in X$ and $z\in X_\C$.}
\eeqs

Let $X$ and $Y$ be real linear spaces, and $X_\C$ and $Y_\C$ be their complexifications. Let $L$ be a linear mapping from $X$ to $Y$. The complexification $L$ is the mapping $L_\C:X_\C\to Y_\C$ defined by
\beqs
L_\C (x_1+ix_2)=Lx_1+i Lx_2 \text{ for  all } x_1,x_2\in X.
\eeqs
Clearly, $L_\C$ is the unique  linear extension of $L$ from $X$ to $X_\C$.

\begin{lemma}[ { \cite[Corollary 4.2]{H7}} ]\label{XYsame}
If $L:X\to Y$ is a bounded linear mapping between two real inner product spaces, then $L_\C$ is also a bounded linear mapping, and
\beqs
\|L_\C\|_{\mathcal B(X_\C,Y_\C)}=\|L\|_{\mathcal B(X,Y)}.
\eeqs 
Here, $\|\cdot\|_{\mathcal B(\cdot,\cdot)}$ denotes the norm of a bounded linear mapping.
\end{lemma}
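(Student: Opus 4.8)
The plan is to prove the equality of operator norms by establishing the two inequalities separately, the whole argument resting on the Pythagorean identity $\|x+iy\|_{X_\C}^2=\|x\|_X^2+\|y\|_X^2$ for the complexified norms recorded just above. For the bound $\|L_\C\|_{\mathcal B(X_\C,Y_\C)}\le \|L\|_{\mathcal B(X,Y)}$, I would take an arbitrary $z=x_1+ix_2\in X_\C$ with $x_1,x_2\in X$. Since $L_\C z=Lx_1+iLx_2$ with $Lx_1,Lx_2\in Y$ by the definition of the complexification, the norm identity in $Y_\C$ gives
\[
\|L_\C z\|_{Y_\C}^2=\|Lx_1\|_Y^2+\|Lx_2\|_Y^2\le \|L\|_{\mathcal B(X,Y)}^2\bigl(\|x_1\|_X^2+\|x_2\|_X^2\bigr)=\|L\|_{\mathcal B(X,Y)}^2\,\|z\|_{X_\C}^2,
\]
which simultaneously shows that $L_\C$ is bounded and that its norm is at most $\|L\|_{\mathcal B(X,Y)}$.

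For the reverse inequality, I would use that $X$ embeds isometrically into $X_\C$ via $x\mapsto x+i0$, because $\|x+i0\|_{X_\C}=\|x\|_X$, and likewise $\|Lx+i0\|_{Y_\C}=\|Lx\|_Y$; moreover $L_\C(x+i0)=Lx$ by definition of $L_\C$. Hence for every $x\in X$,
\[
\|Lx\|_Y=\|L_\C(x+i0)\|_{Y_\C}\le \|L_\C\|_{\mathcal B(X_\C,Y_\C)}\,\|x+i0\|_{X_\C}=\|L_\C\|_{\mathcal B(X_\C,Y_\C)}\,\|x\|_X,
\]
and taking the supremum over unit vectors $x\in X$ yields $\|L\|_{\mathcal B(X,Y)}\le \|L_\C\|_{\mathcal B(X_\C,Y_\C)}$. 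Combining the two inequalities gives the asserted equality, and the linearity and well-definedness of $L_\C$ are immediate from the definitions recalled above.

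There is essentially no analytic obstacle in this proof; the only point worth emphasizing is \emph{where} the inner product structure is used. For the complexification of a general real normed space one only obtains a two-sided estimate with a multiplicative constant (of order $\sqrt 2$), and the sharp equality $\|L_\C\|=\|L\|$ genuinely requires the identity $\|x+iy\|_{X_\C}^2=\|x\|_X^2+\|y\|_X^2$, i.e. that $X$ and $Y$ are inner product spaces. This is precisely the feature exploited in the displayed computations above.
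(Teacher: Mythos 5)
Your proof is correct, and since the paper simply cites this lemma from an external reference (Corollary 4.2 of [H7]) rather than proving it, there is no in-paper argument to compare against. Your two-inequality argument — the Pythagorean identity $\|x+iy\|_{X_\C}^2=\|x\|_X^2+\|y\|_X^2$ to get $\|L_\C\|\le\|L\|$, and the isometric inclusion $X\hookrightarrow X_\C$ to get $\|L\|\le\|L_\C\|$ — is the standard proof, and your closing remark correctly pinpoints that the inner product structure is what removes the $\sqrt2$ loss one would incur with a general real normed space.
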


\subsubsection{Complexification in the context of the NSE}\label{cmpNSE}

For $\alpha,\sigma\ge 0$, denote the complexification $(G_{\alpha,\sigma})_\C$ by $G_{\alpha,\sigma,\C}$; it is a complex Hilbert space and we abbreviate its norm $\|\cdot\|_{G_{\alpha,\sigma,\C}}$  by $|\cdot|_{\alpha,\sigma}$. In particular, $G_{1,0,\C}=(\mathcal D(A))_\C$, the norm $\|\cdot\|_{H_\C}$ is denoted by $|\cdot|$, and $\|\cdot\|_{V_\C}$ by $\|\cdot\|$.

\begin{definition}\label{ABC}
Considering the Stokes operator $A$ given by \eqref{ADA}, let $A_\C$ denote its complexification. Specifically,  $A_\C:G_{1,0,\C}\to H_\C$ is defined by 
\beqs
A_\C(u+iv)=Au+iAv \text{ for }u,v\in G_{1,0}.
\eeqs

Considering the bilinear form $B$ given by \eqref{BDA}, its complexification  is  $B_\C:G_{1,0,\C}\times G_{1,0,\C}\to H_\C$  defined by
\beqs
\begin{aligned}
&B_\C(u_1+iv_1,u_2+iv_2)=B(u_1,u_2)-B(v_1,v_2) +i ( B(u_1,v_2)+B(v_1,u_2))
\end{aligned}
\eeqs
for  $u_1,u_2,v_1,v_2\in G_{1,0}$.
\end{definition}

Then $A_\C$ is the unique linear mapping that extends $A$ from $G_{1,0}$ to $G_{1,0,\C}$.
Thanks to Lemma \ref{XYsame}, $A_\C$ is a bounded linear mapping from $G_{1,0,\C}$ to $H_\C$.
Similarly, $B_\C$ is the unique bilinear mapping that extends $B$ from $G_{1,0}\times G_{1,0}$ to $G_{1,0,\C}\times G_{1,0,\C}$.
Moreover,  $B_\C$ is a bounded bilinear mapping from $G_{1,0,\C}\times G_{1,0,\C}$ to $H_\C$. 
We  have, for all $w\in G_{1,0,\C}$ and $w_1,w_2\in G_{1,0,\C}$, that
\begin{equation}\label{BCbar}
\overline{A_\C w}=A_\C \overline{w} 
\text{ and }
\overline{B_\C(w_1,w_2)}=B_\C(\overline{w_1},\overline{w_2}).
\end{equation}

Let $(\lambda_n)_{n=1}^\infty$ and $(\vecw_n)_{n=1}^\infty$ be the ones in  \eqref{lambn} and \eqref{Awn}.
It is clear that  $(\vecw_n)_{n=1}^\infty$ is a complete orthonormal basis of $H_\C$ and 
\beqs
A_\C \vecw_n=\lambda_n \vecw_n \text{ for all } n\in\N.
\eeqs
Then the set of eigenvalues of $A_\C$ is exactly $\spec(A)$.
Moreover,  for any $\Lambda\in \spec(A)$, the eigenspace of $A_\C$ corresponding to $\Lambda$ is $(R_\Lambda H)_\C$.
For  $\Lambda\in \spec(A)$, define $R_{\Lambda,\C}$ to be the orthogonal projection from $H_\C$ to  the space $(R_\Lambda H)_\C$, and 
$$P_{\Lambda,\C}=\sum_{\lambda\in\spec(A),\lambda\le \Lambda}R_{\lambda,\C}.$$

For $\alpha,s,\sigma\in\R$ and $w=\sum_{n=1}^\infty c_n\vecw_n \in H_\C$ with $c_n=\inprod{w,\vecw_n}_{H_\C}\in \C$,  we define, in a similar manner to \eqref{Aee}, 
\beqs
A_\C^\alpha w=\sum_{n=1}^\infty \lambda_n^\alpha c_n\vecw_n,\quad
e^{s A_\C} w=\sum_{n=1}^\infty e^{s \lambda_n} c_n\vecw_n,\quad
e^{\sigma A_\C^{1/2}} w=\sum_{n=1}^\infty e^{\sigma \sqrt{\lambda_n}} c_n\vecw_n,
\eeqs
whenever the defined element belongs to $H_\C$. 
Then one has  
\beqs
R_{\Lambda,\C}=(R_{\Lambda})_\C,\quad 
P_{\Lambda,\C}=(P_{\Lambda})_\C,\quad
A_\C^\alpha e^{\sigma A_\C^{1/2}}=(A^\alpha e^{\sigma A^{1/2}})_\C,\quad 
e^{s A_\C}=(e^{s A})_\C. 
\eeqs
For $\alpha,\sigma\ge 0$ and $w\in G_{\alpha,\sigma,\C}$, we have $|w|_{\alpha,\sigma}=|w|_{\alpha,\sigma,\C}=|A_\C^\alpha e^{\sigma A_\C^{1/2}}w|$. 
The inequalities \eqref{als0}, \eqref{als1} and \eqref{als} are still valid for complexified spaces $H_\C$, $G_{0,\sigma,\C}$  and operator $A_\C$ with the same constants $d_0(\alpha,\sigma)$, $d_0(2\alpha,\sigma)$.

For $c\in \C$, we define the linear mapping  $A_\C+c{\mathbb I}:G_{1,0,\C}\to H_\C$ by 
$$(A_\C+c{\mathbb I})w=A_\C w +c w\text{  for  }w\in G_{1,0,\C}.$$
If $\omega\in\R$, then 
\beqs
|(A_\C+i\omega{\mathbb I})w|_{\alpha,\sigma}^2=|A_\C w|_{\alpha,\sigma}^2+\omega^2 |w|_{\alpha,\sigma}^2
\le (1+\omega^2) |w|_{\alpha+1,\sigma}^2
\text{ for $w\in G_{\alpha+1,\sigma,\C}$.}
\eeqs

For the existence and estimates of $(A_\C+i\omega{\mathbb I})^{-1}w$ we have the following.

\begin{lemma}[ {\cite[Lemma 4.4]{H6}} ]\label{Aioinv}
For any numbers $\alpha,\sigma\ge 0$ and $\omega\in \R$, one has $A_\C+i\omega{\mathbb I}$ is a bijective, bounded linear mapping from $G_{\alpha+1,\sigma,\C}$ to $G_{\alpha,\sigma,\C}$ with
\beq\label{AZA}
|(A_\C+i\omega{\mathbb I})^{-1}w|_{\alpha+1,\sigma}\le |w|_{\alpha,\sigma}\text{  for all }w\in  G_{\alpha,\sigma,\C}.
\eeq
\end{lemma}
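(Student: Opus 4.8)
The plan is to decompose the problem spectrally, exploiting that $A_\C$ acts diagonally on the orthonormal basis $(\vecw_n)$. Fix $\alpha,\sigma\ge 0$ and $\omega\in\R$. For $w=\sum_{n=1}^\infty c_n\vecw_n\in G_{\alpha,\sigma,\C}$, I would \emph{define} the candidate inverse by the explicit series
\beqs
u=\sum_{n=1}^\infty \frac{c_n}{\lambda_n+i\omega}\vecw_n,
\eeqs
which makes sense termwise since $\lambda_n\ge\lambda_1=1>0$, so $|\lambda_n+i\omega|\ge\lambda_n\ge 1$ and no denominator vanishes. The first step is to check $u\in G_{\alpha+1,\sigma,\C}$: writing the Gevrey--Sobolev norm in terms of the basis, $|u|_{\alpha+1,\sigma}^2=\sum_n \lambda_n^{2(\alpha+1)}e^{2\sigma\sqrt{\lambda_n}}|c_n|^2/|\lambda_n+i\omega|^2$, and using $\lambda_n^2/|\lambda_n+i\omega|^2=\lambda_n^2/(\lambda_n^2+\omega^2)\le 1$, this is bounded by $\sum_n \lambda_n^{2\alpha}e^{2\sigma\sqrt{\lambda_n}}|c_n|^2=|w|_{\alpha,\sigma}^2<\infty$. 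This simultaneously establishes membership and the norm bound \eqref{AZA}.

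Next I would verify that $u$ is genuinely the unique preimage of $w$ under $A_\C+i\omega\mathbb I$. Applying the operator termwise, $(A_\C+i\omega\mathbb I)u=\sum_n(\lambda_n+i\omega)\cdot\frac{c_n}{\lambda_n+i\omega}\vecw_n=\sum_n c_n\vecw_n=w$, so the map is surjective onto $G_{\alpha,\sigma,\C}$ and $u$ is a right inverse image. For injectivity: if $(A_\C+i\omega\mathbb I)v=0$ for some $v=\sum_n b_n\vecw_n\in G_{\alpha+1,\sigma,\C}$, then $(\lambda_n+i\omega)b_n=0$ for every $n$, forcing $b_n=0$ since $\lambda_n+i\omega\ne 0$; hence $v=0$. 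Thus $A_\C+i\omega\mathbb I$ is a bijection from $G_{\alpha+1,\sigma,\C}$ onto $G_{\alpha,\sigma,\C}$, and its inverse is the bounded operator $w\mapsto u$ just constructed. Boundedness of $A_\C+i\omega\mathbb I$ itself from $G_{\alpha+1,\sigma,\C}$ to $G_{\alpha,\sigma,\C}$ follows from the displayed identity just above the lemma, namely $|(A_\C+i\omega\mathbb I)w|_{\alpha,\sigma}^2=|A_\C w|_{\alpha,\sigma}^2+\omega^2|w|_{\alpha,\sigma}^2\le(1+\omega^2)|w|_{\alpha+1,\sigma}^2$, which is already recorded in the text.

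There is essentially no hard step here — the lemma is elementary once one works in the eigenbasis. The only point requiring minor care is confirming that the termwise manipulations (applying $A_\C+i\omega\mathbb I$ to an infinite series, and the norm computations with the weights $\lambda_n^{2\alpha}e^{2\sigma\sqrt{\lambda_n}}$) are justified; this is handled by the standard observation that all partial sums lie in $G_{\alpha+1,\sigma,\C}$, the weighted $\ell^2$ sums converge by the bound above, and $A_\C$ is continuous from $G_{\alpha+1,\sigma,\C}$ to $G_{\alpha,\sigma,\C}$, so one may pass to the limit. Alternatively, since $A_\C$ is self-adjoint and nonnegative with spectrum bounded below by $1$, one could simply cite the spectral theorem / functional calculus to get existence of $(A_\C+i\omega\mathbb I)^{-1}$ as a bounded normal operator and then read off the norm bound from $\sup_{\lambda\in\spec(A)}\lambda/|\lambda+i\omega|\le 1$; but the direct Fourier-series argument above is self-contained and matches the style of Section~\ref{Prelim}.
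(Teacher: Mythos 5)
Your proof is correct and is the standard spectral argument one would expect; the paper does not reproduce a proof but simply cites \cite[Lemma 4.4]{H6}, and your diagonal construction of the inverse in the eigenbasis $(\vecw_n)$, together with the pointwise bound $\lambda_n/|\lambda_n+i\omega|\le 1$, is exactly the natural route. The one thing worth noting explicitly (which you do flag) is that the crude justification of the termwise manipulations relies on the fact that $\lambda_1=1>0$ so $|\lambda_n+i\omega|\ge 1$ uniformly, which keeps the partial sums under control; with that, the argument is complete.
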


In particular, applying Lemma \ref{Aioinv} to $\alpha=\sigma=0$ yields $A_\C+i\omega{\mathbb I}$ is a bijective, bounded linear mapping from $G_{1,0,\C}$ to $H_\C$.
We also have the following conjugation relation
\beq\label{Acinv}
\overline{(A_\C+i\omega{\mathbb I})^{-1}w}=(A_\C+\overline{i\omega}\,{\mathbb I})^{-1}\overline{w}.
\eeq

Finally, inequality \eqref{AalphaB} can be extended for $B_\C$, namely, for $\alpha\ge 1/2$, $\sigma\ge 0$, one has
 \beq\label{BCas}
|B_\C(w_1,w_2)|_{\alpha,\sigma}
\le \sqrt2 c_*^\alpha |w_1|_{\alpha+1/2,\sigma} |w_2|_{\alpha+1/2,\sigma} \quad\forall w_1,w_2\in G_{\alpha+1/2,\sigma,\C}.
\eeq

\section{Functions with subordinate variables and asymptotic expansions} \label{classtype}

We describe the asymptotic expansions that will be studied in details in this paper. They are new to the NSE but a less general form was already used in our previous work \cite{H5} for systems of ODEs in the Euclidean spaces.

In this paper, we make use of only  single-valued complex functions. To avoid any ambiguity we recall basic definitions and properties of elementary complex functions.
For $z\in\C$ and $t>0$, the exponential and power functions are defined by
\beq\label{ep}
\exp(z)=\sum_{k= 0}^\infty \frac{z^k}{k!}
\text{ and }
t^z=\exp(z\ln t).
\eeq
When $t=e=\exp(1)$ in \eqref{ep}, one has the usual identity
$e^z=\exp (z)$.

If $z=a+ib$ with $a,b\in\R$, then 
\beqs
t^z=t^a (\cos(b\ln t)+i\sin(b\ln t))\text{ and }|t^z|=t^a.
\eeqs
The standard properties of the power functions still hold, namely, 
\beqs
 t^{z_1}t^{z_2}=t^{z_1+z_2},\quad (t_1t_2)^z=t_1^z t_2^z,\quad (t^z)^m =t^{m z}= (t^m)^z,\quad \ddt (t^z)=zt^{z-1},
\eeqs
for any $t,t_1,t_2>0$, $z,z_1,z_2\in\C$, and $m\in\Z_+$.

Define the iterated exponential and logarithmic functions as follows:
\begin{align*} 
&E_0(t)=t \text{ for } t\in\R,\text{ and } E_{m+1}(t)=e^{E_m(t)}  \text{ for } m\in \Z_+, \ t\in \R,\\
& L_{-1}(t)=e^t,\quad  L_0(t)=t\text{ for } t\in\R,\text{ and }  L_{m+1}(t)= \ln( L_m(t)) \text{ for } m\in \Z_+,\ t>E_m(0).
\end{align*}
Explicitly, $ L_1(t)=\ln t$, $ L_2(t)=\ln\ln t$.
For $k\in \Z_+$, define
\beqs 
\widehat \LL_k(t)=( L_{-1}(t), L_{0}(t), L_1(t),\ldots, L_{k}(t))=(e^t,t,\ln t,\ln\ln t,\ldots, L_{k}(t)).
\eeqs 

It is clear, for $m\in\Z_+$, that 
\begin{align} 
&\text{$ L_m(t)$ is positive and increasing  for $t>E_m(0)$, }\label{Linc}\\
& L_m(E_{m}(0))=0, \quad 
 L_m(E_{m+1}(0))=1, \quad 
\lim_{t\to\infty}  L_m(t)=\infty.\label{Lone}
\end{align}
Also,
\beq  \label{LLk}
\lim_{t\to\infty} \frac{ L_k(t)^\lambda}{ L_m(t)}=0\text{ for all }k>m\ge -1 \text{ and } \lambda\in\R.
\eeq 

For $m\in \N$, the derivative of $ L_m(t)$ is
\beq\label{Lmderiv}
  L_m'(t)=\frac 1{t\prod_{k=1}^{m-1}  L_k (t)}=\frac 1{\prod_{k=0}^{m-1}  L_k (t)}. 
\eeq
With the use of the L'Hospital rule and \eqref{Lmderiv}, one can prove, by  induction,  that  it holds, for any $T\in\R$ and $c>0$, 
\beq\label{Lshift}
\lim_{t\to\infty}\frac{ L_m(T+ct)}{ L_m(t)}=
\begin{cases} 
c,& \text{ for }m=0,\\
1, & \text{ for }m\ge 1.
\end{cases}
\eeq
Consequently, if $T,T'>E_m(0)$ and $c,c'>0$, then there are numbers $C,C'>0$ such that
\beq\label{Lsh2}
C'\le\frac{ L_m(T+ct)}{ L_m(T'+c't)}\le C\text{ for all }t\ge 0.
\eeq

We recall a fundamental integral estimate that will be used throughout.

\begin{lemma}[{\cite[Lemma 2.5]{CaH3}}]\label{plnlem}
Let $m\in\Z_+$ and $\lambda>0$, $\gamma>0$  be given. For any number $T_*>E_m(0)$, there exists a number $C>0$ such that
\beq\label{iine2}
 \int_0^t e^{-\gamma (t-\tau)} L_m(T_*+\tau)^{-\lambda}\d\tau
 \le C  L_m(T_*+t)^{-\lambda} \quad\text{for all }t\ge 0.
\eeq
\end{lemma}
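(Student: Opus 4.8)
The plan is to prove \eqref{iine2} by splitting the integral at the midpoint $\tau=t/2$ and playing the exponentially decaying kernel $e^{-\gamma(t-\tau)}$ against the slow (at most linear) growth of $\tau\mapsto L_m(T_*+\tau)$. First I would fix $m$, $\lambda$, $\gamma$, $T_*$ as in the statement. Since $T_*>E_m(0)$, property \eqref{Linc} guarantees that $\tau\mapsto L_m(T_*+\tau)$ is positive and increasing on $[0,\infty)$, hence $\tau\mapsto L_m(T_*+\tau)^{-\lambda}$ is positive and decreasing there. Write, for $t\ge 0$,
\beqs
\int_0^t e^{-\gamma(t-\tau)} L_m(T_*+\tau)^{-\lambda}\,\d\tau = I_1(t)+I_2(t),
\eeqs
where $I_1(t)$ is the integral over $[0,t/2]$ and $I_2(t)$ the integral over $[t/2,t]$.

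For $I_1(t)$, on $[0,t/2]$ monotonicity gives $L_m(T_*+\tau)^{-\lambda}\le L_m(T_*)^{-\lambda}$, so $I_1(t)\le L_m(T_*)^{-\lambda}\int_0^{t/2}e^{-\gamma(t-\tau)}\,\d\tau\le \gamma^{-1}L_m(T_*)^{-\lambda}e^{-\gamma t/2}$. It then suffices to absorb $e^{-\gamma t/2}$ into $L_m(T_*+t)^{-\lambda}$: the function $t\mapsto e^{-\gamma t/2}L_m(T_*+t)^{\lambda}$ is continuous on $[0,\infty)$ and tends to $0$ as $t\to\infty$ because $L_m(T_*+t)=\bigo(t)$ (for $m\ge 1$ one has $L_m\le L_1=\ln$, and $L_0(s)=s$), hence it is bounded, giving $I_1(t)\le C_1 L_m(T_*+t)^{-\lambda}$. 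For $I_2(t)$, on $[t/2,t]$ monotonicity gives $L_m(T_*+\tau)^{-\lambda}\le L_m(T_*+t/2)^{-\lambda}$, so $I_2(t)\le L_m(T_*+t/2)^{-\lambda}\int_{t/2}^t e^{-\gamma(t-\tau)}\,\d\tau\le \gamma^{-1}L_m(T_*+t/2)^{-\lambda}$. Writing $T_*+t=T_*+1\cdot t$ and $T_*+t/2=T_*+(1/2)\cdot t$, estimate \eqref{Lsh2} applied with $T=T'=T_*$, $c=1$, $c'=1/2$ produces a constant $C_2'>0$ with $L_m(T_*+t)/L_m(T_*+t/2)\le C_2'$ for all $t\ge 0$; hence $L_m(T_*+t/2)^{-\lambda}\le (C_2')^{\lambda}L_m(T_*+t)^{-\lambda}$ and $I_2(t)\le C_2 L_m(T_*+t)^{-\lambda}$. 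Taking $C=C_1+C_2$ completes the argument.

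The proof is essentially routine; there is no genuine obstacle, only a point requiring a little care, namely the uniformity of the constant over all $t\ge 0$, including $t$ near $0$. In the first piece this is handled by continuity together with the decay of an explicit scalar function, and in the second piece by the shift estimate \eqref{Lsh2}. If one wishes to avoid invoking \eqref{Lsh2}, the comparison of $L_m(T_*+t/2)$ with $L_m(T_*+t)$ can be made directly: for $m=0$ it reduces to $(T_*+t)/(T_*+t/2)\le 2$, and for $m\ge 1$ one combines \eqref{Lshift} with boundedness on compact $t$-intervals.
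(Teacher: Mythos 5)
Your proof is correct, but it takes a genuinely different route from the paper's. The paper sets $g(t)=e^{\gamma t}L_m(T_*+t)^{-\lambda}$ and $h(t)=\int_0^t g(\tau)\,\d\tau$, computes the quotient $h'/g'$, and applies the L'Hospital Rule to show $h(t)/g(t)\to 1/\gamma$; combined with the maximum of $h/g$ on compact sets this gives the uniform bound directly. You instead split the integral at $\tau=t/2$: on $[0,t/2]$ you absorb the exponential tail $e^{-\gamma t/2}$ into $L_m(T_*+t)^{-\lambda}$ using the sub-polynomial growth of $L_m$, and on $[t/2,t]$ you bound by $L_m(T_*+t/2)^{-\lambda}$ and invoke the shift comparison \eqref{Lsh2} to convert this into $L_m(T_*+t)^{-\lambda}$ up to a constant. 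Your approach is more elementary (no L'Hospital) and makes the source of the constant $C$ more transparent, at the cost of being slightly longer and of relying on \eqref{Lsh2} (or, as you note, an ad hoc replacement for it). The paper's argument is shorter and self-contained, but the limit-plus-compactness step hides the constant. Both are valid; the split-at-$t/2$ technique is a standard alternative in this type of convolution estimate.
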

\begin{proof} We present a short proof here. For $t\ge 0$, let
$$g(t)= e^{\gamma t}L_m(T_*+t)^{-\lambda} \text{ and } h(t)=\int_0^t g(\tau)\d \tau.$$
Then $h\ge 0$, $g>0$ on $[0,\infty)$, they are $C^1$-functions and tend to infinity as $t\to\infty$. We have
$$\frac{h'(t)}{g'(t)}=\frac{g(t)}{\gamma g(t)- \lambda g(t)(\prod_{k=0}^{m}  L_k (t))^{-1}}
=\frac{1}{\gamma - \lambda (\prod_{k=0}^{m}  L_k (t))^{-1}}$$
which converges to $1/\gamma$ as $t\to\infty$. 
Applying the L'Hospital Rule gives $\lim_{t\to\infty}h(t)/g(t)=1/\gamma$. 
Combining this fact with the existence of the maximum of $h/g$ on any bounded, closed subinterval of $[0,\infty)$, we obtain the estimate \eqref{iine2} for all $t\ge 0$.
\end{proof}

\subsection{Functions with subordinate variables}
Let $\K=\R$ or $\C$. For $k\ge -1$,
\beq
\label{azvec} 
z=(z_{-1},z_0,z_1,\ldots,z_k)\in (0,\infty)^{k+2}
\text{ and } 
\beta=(\beta_{-1},\beta_0,\beta_1,\ldots,\beta_k)\in \K^{k+2},
\eeq
 define 
 $\begin{displaystyle} z^{\beta}=\prod_{j=-1}^k z_j^{\beta_j}
 \end{displaystyle}.$
Similarly, for $\ell\ge 1$,
\beq
\label{zevec} 
\zeta=(\zeta_1,\ldots,\zeta_\ell)\in (0,\infty)^\ell
\text{ and } 
\gamma=(\gamma_1,\ldots,\gamma_\ell)\in \K^\ell,
\eeq
define
$\begin{displaystyle}
  \zeta^{\gamma}=\prod_{j=1}^\ell \zeta_j^{\gamma_j}
 \end{displaystyle}.$

For $\mu\in\R$, $m,k\in\Z$ with  $k\ge m\ge -1$, denote by $\mathcal E_\K(m,k,\mu)$ the set of vectors $\beta$ in \eqref{azvec} 
 such that
 \beqs
\Re(\beta_j)=0 \text{ for $-1\le j<m$   and  } \Re(\beta_m)=\mu.
\eeqs 
Particularly, $\mathcal E_\R(m,k,\mu)$ is the set of vectors $\beta=(\beta_{-1},\beta_0,\ldots,\beta_k)\in \R^{k+2}$   such that
 $$\beta_{-1}=\ldots=\beta_{m-1}=0 \text{ and }\beta_m=\mu.$$
For example, when $m=-1$, $k\ge -1$, $\mu=0$, the set 
\beq \label{Eminus}
\mathcal E_\K(-1,k,0)\text{  is  the collection of vectors $\beta$ in \eqref{azvec} with $\Re (\beta_{-1})=0$. }
\eeq 

Let $k\ge m\ge -1$, $\mu\in\R$, and $\beta\in\mathcal E_\K(m,k,\mu)$.
Using \eqref{LLk}, one can verify that, see, e.g., equation (3.14) in \cite{H5}, 
\beq\label{LLo}
\lim_{t\to\infty} \frac{\widehat\LL_{k}(t)^\beta}{ L_{m}(t)^{\mu+\delta}}=0 \quad\text{ for any }\delta>0.
\eeq

\begin{definition}\label{Hclass}
 Let $\K$ be $\R$ or $\C$,  and  $X$ be a linear space over $\K$.

\begin{enumerate}[label=\tnum]
\item \label{Hi} For $k\ge -1$, $\ell\ge 1$, define $\widehat{\mathscr P}(k,\ell,X)$ to be the set of functions of the form 
\beq\label{pzedef} 
p(z,\zeta)=\sum_{(\beta,\gamma)\in S}  z^{\beta}\zeta^\gamma \xi_{\beta,\gamma}\text{ for }z\in (0,\infty)^{k+2}\text{ and }\zeta\in(0,\infty)^\ell,
\eeq 
where $S$ is some finite subset of $\K^{k+2}\times \K^\ell$, and each $\xi_{\beta,\gamma}$ belongs to $X$.

\item \label{Hii} Let $k\ge m\ge -1$, $\ell\ge 1$,  and $\mu\in\R$. 
Define $\widehat{\mathscr P}_{m}(k,\ell,\mu,X)$ to be the set of functions $p\in\widehat{\mathscr P}(k,\ell,X)$ of the form \eqref{pzedef} such that  $\beta\in \mathcal E_\K(m,k,\mu)$  for each 
$(\beta,\gamma)\in S$.

\item \label{Hiii} Let ${\mathscr P}(k,X)$, respectively,  ${\mathscr P}_{m}(k,\mu,X)$ be defined the same way as $\widehat{\mathscr P}(k,\ell,X)$, respectively,  $\widehat{\mathscr P}_{m}(k,\ell,\mu,X)$, but for functions $p=p(z)$ only, that is, without variable $\zeta$, dimension $\ell$, vectors $\gamma$, and the sets $(0,\infty)^\ell$, $\K^\ell$.
Denote
 \beqs
 \widehat{\mathscr P}(k,0,X)=\mathscr P(k,X)\text{ and }
  \widehat{\mathscr P}_m(k,0,\mu,X)=\mathscr P_m(k,\mu,X). 
 \eeqs
 \end{enumerate}
 \end{definition}
 
 For a function $p(z,\zeta)$ in \eqref{pzedef}, we call $\zeta_1,\zeta_2,\ldots$ the \textit{subordinate variables}.
 The reason for this terminology is that they will be used with the substitions \eqref{Zk} and \eqref{Yk} in the asymptotic expansion \eqref{fiter}.
 
The following remarks on Definition \ref{Hclass} are in order.

\begin{enumerate}[label=(\alph*)]
\item Given any number $\ell\in \N$. For $p\in \mathscr P(k,X)$, by considering $S=S\times\{\mathbf 0_\ell\}\subset \K^{k+2}\times \K^\ell$ for $S\subset \K^{k+2}$ and $z^\beta=z^\beta \zeta^{\mathbf 0_\ell}$, we have $p=p(z,\zeta)\in  \widehat{\mathscr P}(k,\ell,X)$. Thus, we have
\beqs
\mathscr P(k,X)\subset  \widehat{\mathscr P}(k,\ell,X),\text{ and, similarly, }
\mathscr P_m(k,\mu,X)\subset  \widehat{\mathscr P}_m(k,\ell,\mu,X).
\eeqs

 \item  By considering 
 \beq \label{zbtil}
 \text{variable $\tilde z=(z,\zeta)\in (0,\infty)^{k+\ell+2}$ and power vector $\tilde\beta=(\beta,\gamma)\in \K^{k+\ell+2}$,}
 \eeq 
 we have the identities 
\beq \label{samespace}
\widehat{\mathscr P}(k,\ell,X)= {\mathscr P}(k+\ell,X)\text{ and }
 \widehat{\mathscr P}_{m}(k,\ell,\mu,X)={\mathscr P}_{m}(k+\ell,\mu,X).
 \eeq  
 However, because the variables $z$ and $\zeta$ will play different roles, the identities in \eqref{samespace} will seldom be used, only at some technical steps.
 
 \item\label{Cb} Each $\widehat{\mathscr P}(k,\ell,X)$, $\widehat{\mathscr P}_{m}(k,\ell,\mu,X)$ is a linear space over $\K$.

 \item\label{Cc} Let $k'\ge k\ge -1$ and $\ell'\ge \ell\ge 1$. For the power vectors $\beta\in \K^{k+2}$ and $\gamma\in\K^\ell$ as in \eqref{azvec} and \eqref{zevec}, it is standard to consider them as
 \beq \label{RimR}
 \beta=(\beta_{-1},\ldots,\beta_k,0,\ldots,0)\in \K^{k'+2}\text{ and }
 \gamma=(\gamma_1,\ldots,\gamma_\ell,0,\ldots,0)\in\K^{\ell'}.
 \eeq 
 Thus, we will widely use  the embeddings
   \beq \label{PPemd}
   \widehat{\mathscr P}(k,\ell,X)\subset \widehat{\mathscr P}(k',\ell',X)\text{ and }
   \widehat{\mathscr P}_m(k,\ell,\mu,X)\subset \widehat{\mathscr P}_m(k',\ell',\mu,X).
   \eeq
 Clearly, \eqref{PPemd} also holds true for $\ell'\ge \ell\ge 0$.
  
 \item\label{Cd} For $k\ge m\ge -1$, $\ell\ge 0$ and $\mu\in\R$, one has
 \beq\label{qpequiv}
 \begin{aligned}
& q\in  \widehat{\mathscr P}_{m}(k,\ell,\mu,X) \text{ if and only if }\\
 & \exists p\in  \widehat{\mathscr P}_{m}(k,\ell,0,X): q(z,\zeta)=p(z,\zeta) z_m^{\mu} \text{ with $z$ as in \eqref{azvec}}. 
 \end{aligned}
 \eeq
 
 \item \label{Ce} For any $k\ge m>m'\ge -1$, $\ell\ge 0$ and $\mu\in\R$, one has
\beq\label{Pmmz}
\widehat{\mathscr P}_m(k,\ell,\mu,X)\subset \widehat{\mathscr P}_{m'}(k,\ell,0,X) . 
\eeq

\item\label{Cf} 
If $X$ is a normed space and $p\in\widehat{\mathscr P}_{m}(k,\ell,\mu,X)$, then $\partial p/\partial \zeta_j\in\widehat{\mathscr P}_{m}(k,\ell,\mu,X)$ for all  $1\le j\le \ell$.   
\end{enumerate}

Since our results for the NSE involve real-valued functions, the following counterpart of Definition \ref{Hclass} is needed.

\begin{definition}\label{realH}
 Let $X$ be a linear space over $\R$, and $X_\C$ be its complexification.

\begin{enumerate}[label=\tnum]
\item Define $\widehat{\mathscr P}(k,\ell,X_\C,X)$ to be set of functions of the form 
\beq\label{Rpzdef} 
p(z,\zeta)=\sum_{(\beta,\gamma)\in S}  z^\beta\zeta^\gamma\xi_{\beta,\gamma}\text{ for }z\in (0,\infty)^{k+2}\text{ and } \zeta\in(0,\infty)^\ell,
\eeq 
where $S$ is a finite subset of $\C^{k+2}\times \C^\ell$ that preserves the conjugation,
and each $\xi_{\beta,\gamma}$ belongs to $X_\C$, with 
\beq\label{xiconj}
\xi_{\overline \beta,\overline \gamma}=\overline{\xi_{\beta,\gamma} }\quad\text{ for all } (\beta,\gamma)\in S.
\eeq

\item\label{realii} Define $\widehat{\mathscr P}_m(k,\ell,\mu,X_\C,X)$ to be set of functions in $\widehat{\mathscr P}(k,\ell,X_\C,X)$ with the restriction that the set $S$ in \eqref{Rpzdef} is also a subset of $\mathcal E_\C(m,k,\mu)$.

\item Let  ${\mathscr P}(k,X_\C,X)$, respectively,  ${\mathscr P}_m(k,\mu,X_\C,X)$,  be defined by the same way as $\widehat{\mathscr P}(k,\ell,X_\C,X)$, respectively,  $\widehat{\mathscr P}_m(k,\ell,\mu,X_\C,X)$,  but for functions $p=p(z)$ only.
Denote
\beqs
 \widehat{\mathscr P}(k,0,X_\C,X)=\mathscr P(k,X_\C,X)\text{ and }
  \widehat{\mathscr P}_m(k,0,\mu,X_\C,X)=\mathscr P_m(k,\mu,X_\C,X). 
 \eeqs
\end{enumerate}
\end{definition}

Based on the fact 
$z^{\overline \beta}\zeta^{\overline \gamma}=\overline{z^\beta\zeta^\gamma}$  
and the condition \eqref{xiconj}, the summation in \eqref{Rpzdef} yields that each function $p\in \widehat{\mathscr P}(k,\ell,X_\C,X)$ is $X$-valued.
In fact, we can rewrite \eqref{Rpzdef} as 
\beq\label{symsum} 
p(z,\zeta)=\frac12\sum_{(\beta,\gamma)\in S}  (z^\beta\zeta^\gamma\xi_{\beta,\gamma}+z^{\overline\beta}\zeta^{\overline\gamma}\,\overline{\xi_{\beta,\gamma}}).
\eeq 

Same as for \eqref{samespace}, we have
\beq \label{same2}
\widehat{\mathscr P}(k,\ell,X_\C,X)={\mathscr P}(k+\ell,X_\C,X)\text{ and }
 \widehat{\mathscr P}_{m}(k,\ell,\mu,X_\C,X)={\mathscr P}_{m}(k+\ell,\mu,X_\C,X).
 \eeq  
Observe that $\widehat{\mathscr P}(k,\ell,X_\C,X)$  and  $\widehat{\mathscr P}_m(k,\ell,X_\C,X)$ are additive subgroups of $\widehat{\mathscr P}(k,\ell,X_\C)$, but not linear spaces over $\C$.

\subsection{Substitutions for the subordinate variables}\label{subor}
We will use the lexicographic order for the power vectors in $\R^n$, for example,  in \eqref{bposcond} and \eqref{betaplus} below.
 
\begin{definition}\label{plusclass}
 Let $k\ge m\ge 0$. 
 
 \begin{enumerate}[label=\tnum]
\item\label{plusi} For $\ell\ge 1$, define $\widehat{\mathscr P}_m^+(k,\ell)$ to a the set of functions  $p\in \widehat{\mathscr P}_m(k,\ell,0,\C,\R)$  of the form 
\beq\label{pdecomp}
p(z,\zeta)=p_*(\zeta)+q(z,\zeta),
\eeq 
where 
\begin{enumerate}[label=\rnum]
\item \label{plusa}
the function $p_*(\zeta)$ is a finite sum of $\zeta^\gamma c_\gamma$, for $\zeta\in(0,\infty)^\ell$, with $\gamma\in \R^\ell$ and $c_\gamma\in \R$ so that 
$p_*(\mathbf 1_\ell)=1$, and 

\item \label{plusb} the function $q(z,\zeta)$ has the formula on the right-hand side of \eqref{Rpzdef}, belongs to the class $\widehat{\mathscr P}_m(k,\ell,0,\C,\R)$ as defined in Definition \ref{realH}\ref{realii}, and additionally satisfies
\beq \label{bposcond}
\Re\beta<\mathbf 0_{k+2}\text{ and }\beta_{-1}=0
\eeq 
 for all power vectors $\beta$ in \eqref{Rpzdef} with the form \eqref{azvec}. 
\end{enumerate}

\item Define ${\mathscr P}_m^+(k)$ in the same way as part \ref{plusi} but for functions $p=p(z)$ only. 
Specifically,
\beq\label{pcomp2} 
p(z)=1+q(z),
\eeq 
where $q\in \mathscr P_m(k,0,\C,\R)$ has the same properties as in the requirement \ref{plusb} of part \ref{plusi} above without the presence of $\zeta$. 
Denote $\widehat{\mathscr P}_m^+(k,0)={\mathscr P}_m^+(k)$.
\end{enumerate}
\end{definition}

In fact, condition \eqref{bposcond} explicitly is
\beq\label{betaplus}
\beta_{-1}=\Re(\beta_0)=\Re(\beta_1)=\ldots=\Re(\beta_m)=0\text{ and } \Re(\beta_{m+1},\ldots,\beta_k)<\mathbf 0_{k-m}.
\eeq
We emphasize, for $k\ge m\ge 0$ and $\ell\ge 0$, that
\beq\label{PPrel}
 \widehat{\mathscr P}_m^+(k,\ell)\subset \widehat{\mathscr P}_m(k,\ell,0,\C,\R)\subset \widehat{\mathscr P}_0(k,\ell,0,\C,\R).
\eeq
Thanks to the first condition in \eqref{bposcond}, the function $q(z,\zeta)$ in \eqref{pdecomp} satisfies
\beq\label{limq}
\lim_{t\to\infty} q(\widehat \LL_k(t),\zeta)=0 \text{ uniformly in $\zeta$ in any compact subsets of $(0,\infty)^\ell$.}
\eeq
Consequently,
\beq\label{limp}
\lim_{\substack{t\to\infty\\ \zeta_1,\ldots,\zeta_{\ell-1}\to 1}} p(\widehat \LL_k(t),\zeta)=p_*(\mathbf 1_\ell)=1.
\eeq
Similarly, $q(z)$ and $p(z)$ in \eqref{pcomp2} satisfy the same limits as in \eqref{limq} and \eqref{limp}, respectively,  without $\zeta$.

\begin{definition}\label{Zsys}
Let $m\in\Z_+$. Suppose the set  $\mathcal K=\{1,2,\ldots,K^*\}$ for some number $K^*\in \N$, or $\mathcal K=\N$;
 the numbers $s_k\in\N$, for $k\in\mathcal K$, are increasing in $k$; and functions $Z_k$, for $k\in\mathcal K$, satisfy
\beq \label{Zs}
Z_k\in \widehat{\mathscr P}_m^+(s_k,k-1).
\eeq 
Define the functions $\mathcal Y_k(t)$, for $k\in\mathcal K$, recursively as follows
\begin{align}
 \mathcal Y_1(t)&=Z_1(\widehat \LL_{s_1}(t)), \label{Y1}\\
\mathcal Y_k(t)&=Z_k(\widehat \LL_{s_k}(t),\mathcal Y_1(t),\mathcal Y_2(t),\ldots,\mathcal Y_{k-1}(t)) \text{ for }k\ge 2.\label{Yk}
\end{align}
For $k\in\mathcal K$, let 
\beq \label{hatY}
\widehat{\mathcal Y}_k(t)=(\mathcal Y_1(t),\mathcal Y_2(t),\ldots,\mathcal Y_k(t)).
\eeq 
Denote by $\mathscr U(m)$ the set of all of the above triples $(\mathcal K,(s_k)_{k\in\mathcal K},(Z_k)_{k\in\mathcal K})$.
\end{definition}

Since $Z_1\in \widehat{\mathscr P}_m^+(s_1,0) ={\mathscr P}_m^+(s_1)$, formula \eqref{Y1} is valid.
Note that the numbers $s_k$ are not necessarily strictly increasing.
If the set $\mathcal K$ is finite, the functions $Z_k$, for $k\in\mathcal K$,  depend only on finitely many variables $z_1,...,z_{s_{K^*}}$ and $\zeta_1,...,\zeta_{K^*}$. 
Roughly speaking, the functions $\mathcal Y_k(t)$ are obtained by  the substitutions
\begin{equation}\label{Zk}
\zeta_1=Z_1(z_{-1},z_0,z_1,\ldots,z_{s_1}),\quad 
\zeta_k=Z_k(z_{-1},z_0,z_1,\ldots,z_{s_k},\zeta_1,\zeta_2,\ldots,\zeta_{k-1}),
\end{equation}
and then evaluating $\zeta_k$ when all $z_j=L_j(t)$. 

Thanks to \eqref{limp}, one observes that
\beq\label{Zlim}
\lim_{t\to \infty} Z_1(\widehat \LL_{s_1}(t))=1
\text{ and }
\lim_{\substack{t\to \infty,\\ \zeta_1,\ldots,\zeta_{k-1}\to 1}} Z_k(\widehat \LL_{s_k}(t),\zeta)=1.
\eeq
These limits and \eqref{Y1}, \eqref{Yk}  imply recursively that 
\beq\label{limYk}
\lim_{t\to\infty} \mathcal Y_k(t)=1\text{ for all }k\in\mathcal K.
\eeq
In fact, we can obtain recursively for each $k\in\mathcal K$ that $\mathcal Y_k(t)$ is a $C^\infty$-function from $[T_k,\infty)$ to $\R$ for some large $T_k>0$.
Thanks to \eqref{limYk}, we can assume that $T_k$ is sufficiently large such that  $\mathcal Y_k(t)\ge 1/2$ for all $t\ge T_k$.

For convenience, whenever $\mathcal Y_0(t)$ or $\widehat{\mathcal Y}_0(t)$ appears in a computation, we just consider it void. With this convention,   we have  from  \eqref{Yk} and \eqref{hatY} that, for $k\in\mathcal K$,
\beq\label{Ykb}
\mathcal Y_k(t)=Z_k(\widehat \LL_{s_k}(t),\widehat{\mathcal Y}_{k-1}(t)).
\eeq
Using the embeddings \eqref{RimR}  we can also rewrite \eqref{Ykb} as
\beqs
\mathcal Y_k(t)=Z_k(\widehat \LL_{s'}(t),\widehat{\mathcal Y}_{k'}(t))
\text{ for any $s'\ge s_k$, $k'\in\mathcal K$ with $k'\ge k-1$ and all sufficiently large $t$.}
\eeqs

It is worth mentioning that the second condition in \eqref{bposcond} is imposed for a technical reason. It prevents the integration by parts in the proof of Theorem \ref{Fode} from being performed repeatedly without having a favorable estimate. 

\subsection{Asymptotic expansions}\label{exsec}

Now, we define the new asymptotic expansions which contain a system of subordinate variables.

\begin{definition}\label{Hexpand}
Let $\K$ be $\R$ or $\C$, and $(X,\|\cdot\|_X)$ be a normed space over $\K$. Suppose $g$ is a function from $(T,\infty)$ to $X$ for some $T\in\R$. 
Let  $m_*\in \Z_+$, $(\mathcal K,(s_k)_{k\in\mathcal K},(Z_k)_{k\in\mathcal K})\in \mathscr U(m_*)$ and $\widehat{\mathcal Y}_k(t)$ be as in Definition \ref{Zsys}.
Let $(\gamma_k)_{k=1}^\infty$ be a divergent, strictly increasing sequence of positive numbers, and $(M_k)_{k=1}^\infty$, $(\widetilde M_k)_{k=1}^\infty$ be increasing  sequences in $\N$ with $M_k\ge m_*$, $\widetilde M_k\in\mathcal K$ and  $M_k\ge s_{\widetilde M_k}$ for all $k\in \N$.

We say the function $g(t)$ has an asymptotic expansion
\beq\label{fiter}
g(t)\sim \sum_{k=1}^\infty p_k\left(\widehat{\LL}_{M_k}(t),\widehat{\mathcal Y}_{\widetilde M_k}(t)\right), \text{ where $p_k\in \widehat{\mathscr P}_{m_*}(M_k,\widetilde M_k,-\gamma_k,X)$ for $k\in\N$, }
\eeq
if, for each $N\in\N$, there is some number $\mu>\gamma_N$ such that
\beq\label{remain}
\left\|g(t) - \sum_{k=1}^N p_k\left(\widehat{\LL}_{M_k}(t),\widehat{\mathcal Y}_{\widetilde M_k}(t)\right)\right\|_X=\bigo( L_{m_*}(t)^{-\mu}).
\eeq
\end{definition}

By using the equivalence \eqref{qpequiv}, we can rewrite the asymptotic expansion  \eqref{fiter} as
\beq\label{expan3}
g(t)\sim \sum_{k=1}^\infty \widehat p_k\left(\widehat{\LL}_{M_k}(t),\widehat{\mathcal Y}_{\widetilde M_k}(t)\right) L_{m_*}(t)^{-\gamma_k}, \text{ where $\widehat p_k\in \mathscr P_{m_*}(M_k,\widetilde M_k,0,X)$ for $k\in\N$. }
\eeq

Suppose $\widehat p_k(z,\zeta)$ is given as a finite sum in \eqref{pzedef} with all $\xi_{\beta,\gamma}\ne 0$.
Then each term $a_{k,\beta,\gamma}(t)\eqdef \widehat{\LL}_{M_k}(t)^\beta \widehat{\mathcal Y}_{\widetilde M_k}(t)^\gamma\xi_{\beta,\gamma}$  
of  $\widehat p_k(\widehat{\LL}_{M_k}(t),\widehat{\mathcal Y}_{\widetilde M_k}(t)) $ in \eqref{expan3} satisfies, thanks to \eqref{limYk},
$$
\lim_{t\to\infty} \frac{\|a_{k,\beta,\gamma}(t)\|_X}{\widehat{\LL}_{M_k}(t)^{\Re\beta}}=\mathbf 1_{\widetilde M_k}^{\Re\gamma} \|\xi_{\beta,\gamma}\|_X=\|\xi_{\beta,\gamma}\|_X.
$$
Thus, one has, for sufficiently large $t$,
$$
\frac12 \widehat{\LL}_{M_k}(t)^{\Re\beta} \|\xi_{\beta,\gamma}\|_X 
\le \|a_{k,\beta,\gamma}(t)\|_X
\le 2 \widehat{\LL}_{M_k}(t)^{\Re\beta} \|\xi_{\beta,\gamma}\|_X.
$$
With $\Re\beta\in \mathcal E_\R(m_*,M_k,0)$, this shows that $a_{k,\beta,\gamma}(t)$  does not contribute  any extra $L_{m_*}(t)^r$, with some $r\in\R$, to the decaying mode $ L_{m_*}(t)^{-\gamma_k}$ in \eqref{expan3}. 
This fact provides a justification for the requirement \eqref{remain} and, hence, the definition of the asymptotic expansion \eqref{fiter}.

\section{Main results}\label{results}

We describe the main results first using the complexified Gevrey--Sobolev spaces $G_{\alpha,\sigma,\C}$ in subsection \ref{Csec}, and then using the real  spaces $G_{\alpha,\sigma}$ in subsection \ref{Rsec}

\begin{assumption}\label{A1} Hereafter, the function $f$ in \eqref{fctnse} belongs to $L^\infty_{\rm loc}([0,\infty),H)$.
\end{assumption}

Under Assumption \ref{A1}, for any $u^0\in H$, there exists a Leray--Hopf weak solution $u(t)$ of \eqref{fctnse} and \eqref{uzero}, see e.g. \cite{FMRTbook}. 
We recall a  result on the eventual regularity and asymptotic estimates for $u(t)$.

\begin{theorem}[{\cite[Theorem 3.4]{CaH2}}]\label{Fthm2}
Let $F$ be a continuous, decreasing, non-negative function on $[0,\infty)$
 that satisfies
$\lim_{t\to\infty} F(t)=0$.
Suppose there exist $\sigma\ge 0$ and $\alpha\ge 1/2$ such that
\beq\label{falphaonly}
|f(t)|_{\alpha,\sigma}=\mathcal O(F(t)).
\eeq

Let $u(t)$ be a Leray--Hopf weak solution of \eqref{fctnse}.  
Then  there exists  $\hat{T}>0$ 
such that $u(t)$ is a regular solution of \eqref{fctnse} on $[\hat{T},\infty)$, and $u(\hat T+t)\in G_{\alpha+1-\varep}$ for any $t\ge 0$ and $\varep\in(0,1)$.
If, in addition, $F$ satisfies 
 \begin{enumerate}[label=\tnum]
  \item  there exist $k_0>0$ and $D_1>0$ such that
$  e^{-k_0 t}\le D_1 F(t)$ for all $t\ge 0$,
 and
\item \label{Frii} for any $a\in(0,1)$, there exists $D_2=D_{2,a}>0$ such that 
$F(at)\le D_2 F(t)$ for all $t\ge 0$,
 \end{enumerate}
 then for any $\varep\in (0,1)$, there exists  $C>0$ such that
 \beq\label{us0}
 |u(\hat{T}+t)|_{\alpha+1-\varep,\sigma} \le CF(t)\text{ for all }  t\ge 0.
 \eeq 
\end{theorem}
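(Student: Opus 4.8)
\emph{Plan.} I would establish the three assertions in turn --- eventual regularity in $V$, Gevrey--Sobolev smoothing up to the level $\alpha+1-\varepsilon$, and, under (i)--(ii), the bound \eqref{us0} --- each by now-standard estimates for the forced NSE. First, since $|f(t)|\le|f(t)|_{\alpha,\sigma}=\mathcal O(F(t))$, the force is bounded in $H$ and $|f(t)|\to 0$. Standard energy estimates for a Leray--Hopf weak solution --- the energy inequality of Definition~\ref{lhdef} together with $\|u\|^2\ge\lambda_1|u|^2$ --- then give $|u(t)|\to 0$ in $H$ and $\int_t^{t+1}\|u(\tau)\|^2\,\d\tau\to 0$ as $t\to\infty$. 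Hence there are arbitrarily large times $t_0$ with $\|u(t_0)\|$ as small as we wish; fixing such a $t_0$ with $\|u(t_0)\|$ small relative to $\nu=1$ and $\sup_{t\ge t_0}|f(t)|$, local existence for regular (strong) solutions, the usual continuation criterion, and the smallness condition produce a global regular solution on $[t_0,\infty)$, which weak--strong uniqueness identifies with $u$. Setting $\hat T=t_0$, then $u$ is a regular solution on $[\hat T,\infty)$, continuous there with values in $V$, and $\|u(t)\|$ stays small for $t\ge\hat T$.

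\emph{Gevrey--Sobolev smoothing.} With $u$ now $V$-valued and continuous on $[\hat T,\infty)$, I would run the Foias--Temam Gevrey energy method (see \cite{FT-Gevrey,HM1}): for a weight $\varphi(t)$ growing from $0$ at $t=\hat T$ to the constant value $\sigma$, and an exponent $\rho$ raised in finitely many steps, take the $H$-inner product of \eqref{fctnse} with $A^{2\rho}e^{2\varphi(t)A^{1/2}}u$. Controlling the bilinear term by \eqref{AalphaB}, bounding $|f(t)|_{\alpha,\sigma}=\mathcal O(F(t))$, and using Young's inequality together with the smoothing factor $(t-\hat T)^{-r}$ coming from the dissipation, one obtains on $[\hat T+\delta,\infty)$, for any $\delta>0$, a bound on $|u(t)|_{\rho,\varphi(t)}$; bootstrapping $\rho$ upward in increments $<1/2$ --- each step drawing on the regularity already gained, via interpolation and \eqref{AalphaB} --- reaches $\rho=\alpha+1-\varepsilon$, while $\varphi(t)$ reaches $\sigma$ for $t$ large. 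The $\varepsilon$-loss and the delay $\delta$ reflect that the level $\alpha+1$ is not attained instantaneously. Replacing $\hat T$ by $\hat T+\delta$ then yields $u(\hat T+t)\in G_{\alpha+1-\varepsilon,\sigma}$ for all $t\ge 0$ and all $\varepsilon\in(0,1)$ (in particular the membership claimed in the theorem), with a uniform bound $M:=\sup_{t\ge 0}|u(\hat T+t)|_{\alpha+1-\varepsilon,\sigma}<\infty$ since $\|u(\hat T)\|$ is small.

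\emph{Decay rate.} Assume now (i) and (ii), fix $\varepsilon\in(0,1/2)$ and set $\rho=\alpha+1-\varepsilon$. From the Duhamel formula
\[
u(\hat T+t)=e^{-tA}u(\hat T)+\int_0^t e^{-(t-\tau)A}\big(f(\hat T+\tau)-B(u(\hat T+\tau),u(\hat T+\tau))\big)\,\d\tau ,
\]
using that $A^\alpha e^{\sigma A^{1/2}}$ commutes with $e^{-sA}$ (so that it falls on $f$, which is already $G_{\alpha,\sigma}$-valued), the spectral bound $\|A^{1-\varepsilon}e^{-sA}\|_{\mathcal B(H)}\le\kappa(s):=C_\varepsilon\min\{s^{-(1-\varepsilon)},e^{-\lambda_1 s/2}\}$ (which is integrable on $(0,\infty)$), and $|B(u,u)|_{\alpha,\sigma}\le c_*^{\alpha}|u|_{\alpha+1/2,\sigma}^2$ from \eqref{AalphaB}, one gets
\[
|u(\hat T+t)|_{\rho,\sigma}\le Ce^{-\lambda_1 t}+C\!\int_0^t \kappa(t-\tau)\big(|f(\hat T+\tau)|_{\alpha,\sigma}+|u(\hat T+\tau)|_{\alpha+1/2,\sigma}^2\big)\,\d\tau .
\]
The homogeneous term is $\mathcal O(F(t))$ by (i). For the $f$-integral, split at $\tau=t/2$: on $[t/2,t]$ use $|f(\hat T+\tau)|_{\alpha,\sigma}\le CF(\tau)\le CF(t/2)\le CD_2 F(t)$, by monotonicity of $F$ and (ii), together with $\int_0^\infty\kappa<\infty$; on $[0,t/2]$ the factor $\kappa(t-\tau)$ is exponentially small in $t$, so by (i) its contribution is $\mathcal O(F(t))$. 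For the quadratic term, first note $|u(\hat T+t)|_{\alpha+1/2,\sigma}\to 0$ --- obtained by interpolating, in the same Gevrey weight, the uniform bound $M$ against $|u(\hat T+t)|_{0,\sigma}$, which tends to $0$ because $|u(t)|_H\to 0$ while the high modes are dominated by $M$. Enlarging $\hat T$ so that $\eta:=\sup_{t\ge 0}|u(\hat T+t)|_{\alpha+1/2,\sigma}$ is small, and using $|u|_{\alpha+1/2,\sigma}\le|u|_{\rho,\sigma}$, the quadratic term is $\le C\eta\,(\kappa*h)(t)$ with $h(t):=|u(\hat T+t)|_{\rho,\sigma}\le M$. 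Collecting the estimates gives $h(t)\le CF(t)+C\eta\,(\kappa*h)(t)$; since $h$ is bounded, $\kappa$ integrable, $\eta$ small, and $(\kappa*F)(t)=\mathcal O(F(t))$ by the split above, this closes to $h(t)=\mathcal O(F(t))$, which is \eqref{us0}. (Alternatively, this step can be carried out as a Gronwall-type differential inequality for $|u(\hat T+t)|_{\rho,\sigma}^2$.)

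\emph{Main obstacle.} I expect the delicate points to be the Gevrey bootstrap --- where $\varphi(t)$ must be chosen so that $\varphi'$ does not spoil the energy identity, and the bilinear term must be absorbed at each intermediate regularity level strictly through \eqref{AalphaB} --- and the closing of the integral inequality, where the convolution estimate $(\kappa*F)(t)=\mathcal O(F(t))$ is exactly the place at which the two structural hypotheses on $F$ are used: (ii) handles the portion near $\tau=t$, where $F(\tau)\approx F(t)$, and (i) absorbs both the exponentially small tail near $\tau=0$ and the homogeneous decay $e^{-\lambda_1 t}$. The quadratic term cannot be estimated on its own; it must be soaked up using the a priori smallness $\eta$ established beforehand.
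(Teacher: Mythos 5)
The paper does not prove this theorem; it imports the result verbatim from \cite[Theorem~3.4]{CaH2}, so there is no internal proof in the present manuscript to compare against. That said, your sketch follows what one would expect such a proof to look like: eventual regularity from energy estimates plus weak--strong uniqueness, a Foias--Temam Gevrey bootstrap for the $G_{\alpha+1-\varepsilon,\sigma}$ membership, and a Duhamel/convolution closure (or, equivalently, a Gronwall differential inequality) for the rate \eqref{us0}.

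There is, however, one step whose justification is not correct as written. In the tail estimate on $[0,t/2]$ you obtain a contribution of order $t\,e^{-\lambda_1 t/4}$ and assert that hypothesis~(i) makes it $\mathcal O(F(t))$. But (i) only furnishes $e^{-k_0 t}\lesssim F(t)$ for one fixed $k_0>0$; this dominates $e^{-\lambda_1 t/4}$ only when $k_0\le\lambda_1/4$, which the hypothesis does not grant. What actually closes the estimate is hypothesis~(ii): iterating $F(t/2)\le D_{2}F(t)$ dyadically and using that $F$ is decreasing gives $F(t)\ge F(1)\,t^{-\log_2 D_2}$ for $t\ge 1$ (and $F(1)>0$ by (i)), so $F$ decays at most polynomially. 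It is this polynomial lower bound --- not (i) by itself --- that absorbs any fixed exponential, including the homogeneous term $e^{-\lambda_1 t}|u(\hat T)|_{\alpha,\sigma}$ as well as the tail of the convolution. So the division of labor should be: (ii) both for the near-diagonal piece (via $F(t/2)\le D_2F(t)$) \emph{and} to rule out superpolynomial decay, with (i) serving mainly to ensure $F>0$ so that the dyadic iteration starts from a positive value.
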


In particular, estimate \eqref{us0} holds for $F(t)=L_m(T_*+t)^{-\mu}$ for $m\ge 0$, $\mu>0$, $t\ge 0$ and a sufficiently large $T_*>0$. Indeed, the requirement \ref{Frii} is met in this case thanks to \eqref{Lsh2}. 

Next, we define the linear transformations $\mathcal M_j$, $\mathcal R$,  $\mathcal Z_{A_\C}$ which are important to our exposition.

\begin{definition}\label{defMR}
Let $X$ be a linear space over $\K=\R$ or $\C$.
Given integers $k\ge -1$, $\ell\ge 0$, let $p\in \widehat{\mathscr P}(k,\ell,X)$ be given by \eqref{pzedef} with  $z\in(0,\infty)^{k+2}$ and $\beta\in \K^{k+2}$ as in \eqref{azvec}. 
\begin{enumerate}[label=\tnum]
\item Define, for $j=-1,0,\ldots,k$, the function $\mathcal M_jp:(0,\infty)^{k+2}\times(0,\infty)^\ell\to X$ by 
\beq\label{MM}
(\mathcal M_jp)(z,\zeta)=\sum_{(\beta,\gamma)\in S} \beta_j z^\beta\zeta^\gamma \xi_{\beta,\gamma}.
\eeq 

\item In the case $k\ge 0$, define the function $ \mathcal R  p:(0,\infty)^{k+2}\times(0,\infty)^\ell\to X$ by 
 \beq\label{chiz}
 (\mathcal R p)(z,\zeta)=
  \sum_{j=0}^k z_0^{-1}z_1^{-1}\ldots z_{j}^{-1}(\mathcal M_j p)(z,\zeta).
 \eeq 
 
 \item  By mapping $p\mapsto \mathcal M_j p$ and, respectively,  $p\mapsto \mathcal Rp$, one defines linear transformation $\mathcal M_j$ on $\mathscr P(k,X)$ for $-1\le j\le k$, and, respectively, 
  linear transformation $\mathcal R$ on $\mathscr P(k,X)$ for $k\ge 0$.
 \end{enumerate}
\end{definition}

In particular, one has from \eqref{MM} that
\beqs
\mathcal M_{-1}p(z,\zeta)=\sum_{(\alpha,\gamma)\in S} \alpha_{-1} z^\beta\zeta^\gamma \xi_{\beta,\gamma}
\quad \text{and}\quad 
\mathcal M_0p(z,\zeta)=\sum_{(\alpha,\gamma)\in S} \alpha_0 z^\beta\zeta^\gamma \xi_{\beta,\gamma} .
\eeqs
Combining \eqref{chiz} with \eqref{MM}, we can rewrite \eqref{chiz} explicitly as
\beq\label{chiz2}
 (\mathcal R p)(z,\zeta)=
  \sum_{(\beta,\gamma)\in S} \sum_{j=0}^k z_0^{-1}z_1^{-1}\ldots z_{j}^{-1}\beta_j z^\beta\zeta^\gamma \xi_{\beta,\gamma}.
 \eeq 

The power vectors $\beta$ of $z$  in \eqref{MM} for $\mathcal M_jp(z,\zeta)$ are the same as those that appear in \eqref{pzedef} for $p(z,\zeta)$. 
This and the explicit formula \eqref{chiz2} yield the following facts. 
\begin{enumerate}[label=\rnum]
\item \label{R0} For $k\ge m\ge 0$, $\ell\ge 0$ and $\mu\in\R$,   if $p$ is in $\widehat{\mathscr P}_{m}(k,\ell,\mu,X)$,  then so are all $\mathcal M_jp$, for $-1\le j\le k$.

\item\label{R1} 
 $\mathcal R p(z,\zeta)$ has the same powers of  $z_{-1}$ as $p(z,\zeta)$.
 
\item \label{R2}   
If $p\in\widehat{\mathscr P}_0(k,\ell,\mu,X)$, then $\mathcal R p\in\widehat{\mathscr P}_0(k,\ell,\mu-1,X)$.
\end{enumerate}
 
\begin{definition}\label{defZA}
Given  integers $k\ge -1$ and $\ell\ge 0$.
\begin{enumerate}[label=\tnum]
\item Let $p\in \widehat{\mathscr P}_{-1}(k,\ell,0,H_\C)$ be given by \eqref{pzedef} with   $\beta\in \C^{k+2}$ as in \eqref{azvec}. 
Define the function $\mathcal Z_{A_\C}p:(0,\infty)^{k+2}\times(0,\infty)^\ell\to G_{1,0,\C}$ by 
 \beq\label{ZAp}
 (\mathcal Z_{A_\C}p)(z,\zeta)=\sum_{(\beta,\zeta)\in S}  z^{\beta}\zeta^\gamma(A_\C+\beta_{-1}{\mathbb I})^{-1} \xi_{\beta,\gamma}.
 \eeq
 
 \item
  By  mapping  $p\mapsto \mathcal Z_{A_\C} p$, one defines the linear transformation   $\mathcal Z_{A_\C}$ on $\widehat{\mathscr P}_{-1}(k,\ell,0,H_\C)$.
\end{enumerate}
\end{definition}

Note that each $\beta=(\beta_{-1},\beta_0,\ldots,\beta_k)$ in \eqref{ZAp} belongs to $\mathcal E_\C(-1,k,0)$, which, by \eqref{Eminus}, yields $\Re(\beta_{-1})=0$. Therefore, $(A_\C+\beta_{-1}{\mathbb I})^{-1}\xi_{\beta,\gamma}$ exists and belongs to $G_{1,0,\C}$  thanks to Lemma \ref{Aioinv}.
If $\beta_{-1}=0$ for all $(\beta,\gamma)\in S$ in \eqref{ZAp}, then $\mathcal Z_{A_\C}p=A_\C^{-1}p$.
Moreover, in the case $p\in \mathscr P_{-1}(k,\ell,0,H)$, which corresponds to $\K=\R$, then $\beta_{-1}=0$, $\beta_0,\ldots,\beta_k\in\R$  and $\xi_{\beta,\gamma}\in H$ for all $(\beta,\gamma)\in S$ in \eqref{ZAp}, hence, $\mathcal Z_{A_\C}p=A^{-1}p$.

\subsection{Statements with complexified spaces}\label{Csec}
Let  $m_*\in \Z_+$, $(\mathcal K,(s_k)_{k\in\mathcal K},(Z_k)_{k\in\mathcal K})$ belong to $\mathscr U(m_*)$ and $\widehat{\mathcal Y}_k(t)$ be as in Definition \ref{Zsys}. For $k\in\mathcal K$,  define  the function 
$$W_k:(0,\infty)^{s_k+2}\times(0,\infty)^{k-1}\to\R$$ 
recursively as follows
\beq\label{Wk}
\left\{
\begin{aligned}
W_1(z)&=\mathcal R Z_1(z)&&\text{ for }k=1,\\ 
W_k(z,\zeta)&=\mathcal R Z_k(z,\zeta)+\sum_{j=1}^{k-1}W_j (z,\zeta) \frac{\partial Z_k(z,\zeta)}{\partial \zeta_j} &&\text{ for } k\ge 2.
\end{aligned}
\right.
\eeq

\begin{assumption}\label{B1} 
There exist  real numbers $\sigma\ge 0$, $\alpha\ge 1/2$,  
a strictly increasing, divergent sequence of positive numbers $(\mu_n)_{n=1}^\infty$ that preserves the addition and unit increment in the case $m_*=0$, or preserves the addition in the case $m_*\ge 1$, 
two increasing sequences $(M_n)_{n=1}^\infty$ and $(\widetilde M_n)_{n=1}^\infty$ of natural numbers as in Definition \ref{Hexpand}, 
and functions 
\beq \label{pncond}
p_n\in \widehat{\mathscr P}_{m_*}(M_n,\widetilde M_n,-\mu_n,G_{\alpha,\sigma,\C},G_{\alpha,\sigma})
\text{  for all $n\in\N$ }
\eeq 
such that $f(t)$ has the asymptotic expansion, in the sense of Definition \ref{Hexpand},
\beq\label{fseq}
f(t)\sim \sum_{n=1}^\infty p_n(\widehat\LL_{M_n}(t),\widehat{\mathcal Y}_{\widetilde M_n}(t)) \text{ in }G_{\alpha,\sigma}.
\eeq
\end{assumption}

We will construct the asymptotic expansion for a solution of \eqref{fctnse} using the following explicitly defined functions.
 
 \begin{definition}\label{construct}
Under Assumption \ref{B1}, define $q_n$, for $n\in\N$,  recursively as follows.
 In the case $m_*=0$, 
\beq\label{qn}
q_n=\mathcal Z_{A_\C}\Big(p_n - \sum_{\substack{1\le m,j\le n-1,\\ \mu_m+\mu_j=\mu_n}}B_\C(q_m,q_j) - \chi_n \Big) \quad\text{for } n \ge 1,
\eeq
with 
\beq \label{chin}
\chi_n=
\begin{cases}
\begin{displaystyle}
\mathcal R q_\lambda+\sum_{j=1}^{\widetilde M_\lambda} W_j\frac{\partial q_\lambda}{\partial \zeta_j} ,
\end{displaystyle}
& \text{if there is $\lambda\in [1, n-1]$ such that $\mu_\lambda+1=\mu_n$,}\\
 0,&\text{otherwise}.
\end{cases}
\eeq 
In the case $m_*\ge 1$, let $\chi_n=0$ for all $n$, and define $q_n$ by \eqref{qn}.
 \end{definition}

When $n=1$, \eqref{qn} and \eqref{chin} clearly mean 
$q_1=\mathcal Z_{A_\C} p_1$  and $\chi_1=0$.
For $n\in\N$, the index $\lambda$ in \eqref{chin}, if exists, is unique and $\lambda\le n-1$. Thus, equation \eqref{qn} is, indeed, a recursive formula in $n$. The main properties of $q_n$ are the following, which will be proved in Section \ref{proofsec}.

\begin{proposition}\label{qregpower}
For any $n\in\N$, one has 
\begin{align}
\label{qreg} 
q_n\in \widehat{\mathscr P}_{m_*}(M_n,\widetilde M_n,-\mu_n,G_{\alpha+1,\sigma,\C},G_{\alpha+1,\sigma}).
\end{align}
Consequently, one has, for any $n\in\N$,
\beq\label{Breg} 
B_\C( q_m,q_j) \in\widehat{\mathscr P}_{m_*}(M_n,\widetilde M_n,-\mu_n,G_{\alpha,\sigma,\C},G_{\alpha,\sigma})\text{ for  $m,j\in\N$ with  
$\mu_n=\mu_m+\mu_j$,}
\eeq
\beq\label{chireg}
\chi_n\in\widehat{\mathscr P}_{m_*}(M_n,\widetilde M_n,-\mu_n,G_{\alpha,\sigma,\C},G_{\alpha,\sigma}).
\eeq
\end{proposition}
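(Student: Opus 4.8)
The plan is to prove \eqref{qreg} by strong induction on $n$, and then deduce \eqref{Breg} and \eqref{chireg} as immediate consequences. The base case $n=1$ is handled directly: since $\chi_1=0$ and there are no pairs $(m,j)$ with $m,j\le 0$, we have $q_1=\mathcal Z_{A_\C}p_1$, and by Assumption \ref{B1} the function $p_1$ lies in $\widehat{\mathscr P}_{m_*}(M_1,\widetilde M_1,-\mu_1,G_{\alpha,\sigma,\C},G_{\alpha,\sigma})$. By \eqref{PPrel}-type reasoning (namely $\widehat{\mathscr P}_{m_*}(\cdot)\subset \widehat{\mathscr P}_{-1}(\cdot,0,\cdot)$ when $m_*\ge 0$, after extracting the factor $z_{m_*}^{-\mu_1}$ via \eqref{qpequiv}), $p_1$ is an admissible input for $\mathcal Z_{A_\C}$; the key point is that Lemma \ref{Aioinv} promotes $G_{\alpha,\sigma,\C}$-valued coefficients to $G_{\alpha+1,\sigma,\C}$-valued ones, and the conjugation symmetry \eqref{Acinv} preserves the real-valuedness condition \eqref{xiconj}, so $\mathcal Z_{A_\C}p_1 \in \widehat{\mathscr P}_{m_*}(M_1,\widetilde M_1,-\mu_1,G_{\alpha+1,\sigma,\C},G_{\alpha+1,\sigma})$.

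For the inductive step, assume \eqref{qreg} holds for all indices less than $n$. I would first verify that the argument of $\mathcal Z_{A_\C}$ in \eqref{qn}, call it $\widetilde p_n \eqdef p_n - \sum B_\C(q_m,q_j) - \chi_n$, lies in $\widehat{\mathscr P}_{m_*}(M_n,\widetilde M_n,-\mu_n,G_{\alpha,\sigma,\C},G_{\alpha,\sigma})$. This breaks into three pieces. The term $p_n$ is given by Assumption \ref{B1}. For each bilinear term $B_\C(q_m,q_j)$ with $\mu_m+\mu_j=\mu_n$: by induction $q_m,q_j$ are $G_{\alpha+1,\sigma,\C}$-valued with power vectors carrying $-\mu_m$ and $-\mu_j$ respectively, so applying $B_\C$ coefficient-by-coefficient and using \eqref{BCas} (with $\alpha$ there $\ge 1/2$, valid since $\alpha\ge 1/2$, noting $\alpha+1 \ge \alpha+1/2$) gives $G_{\alpha,\sigma,\C}$-valued output; the power vectors multiply, so the $z_{m_*}$-exponent becomes $-\mu_m-\mu_j=-\mu_n$ and all lower $z_j$-exponents remain purely imaginary (sum of purely imaginary numbers); the conjugation relation \eqref{BCbar} ensures \eqref{xiconj} is preserved. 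One needs the classes $\widehat{\mathscr P}_{m_*}$ to be closed under this kind of product — this is essentially the content of identity \eqref{samespace} reducing everything to the single-variable-block case, together with the observation that multiplying two monomials $z^\beta\zeta^\gamma$ and $z^{\beta'}\zeta^{\gamma'}$ with $\beta,\beta'\in\mathcal E_\C(m_*,\cdot,-\mu_m),(-\mu_j)$ produces a monomial with exponent in $\mathcal E_\C(m_*,\cdot,-\mu_n)$. For $\chi_n$: when $m_*\ge 1$ it is zero; when $m_*=0$ and the index $\lambda$ exists, I need that $\mathcal R q_\lambda$ and $W_j \partial q_\lambda/\partial\zeta_j$ both lie in $\widehat{\mathscr P}_{0}(M_\lambda,\widetilde M_\lambda,-\mu_\lambda-1,G_{\alpha+1,\sigma,\C},G_{\alpha+1,\sigma})\subset \widehat{\mathscr P}_{0}(M_n,\widetilde M_n,-\mu_n,G_{\alpha,\sigma,\C},G_{\alpha,\sigma})$, using $\mu_\lambda+1=\mu_n$, fact \ref{R2} after Definition \ref{defMR}, property \ref{Cf} after Definition \ref{Hclass} (for $\partial/\partial\zeta_j$), and the fact that $W_j \in \widehat{\mathscr P}_{m_*}^+(\cdot)\subset\widehat{\mathscr P}_0(\cdot,\cdot,0,\C,\R)$ — so multiplying by $W_j$ does not change the decay exponent, only shifts some purely-imaginary $z$-exponents by purely-imaginary amounts (by the structure of $W_k$ in \eqref{Wk}, whose building blocks $\mathcal R Z_k$ and $W_j\partial Z_k/\partial\zeta_j$ inherit the $\beta_{-1}=0$ and $\Re\beta=\mathbf 0$ properties from \eqref{betaplus}). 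Note also the embeddings \eqref{PPemd} are needed to uniformize the indices $M_\lambda \le M_n$, $\widetilde M_\lambda\le\widetilde M_n$.

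Once $\widetilde p_n$ is established to lie in $\widehat{\mathscr P}_{m_*}(M_n,\widetilde M_n,-\mu_n,G_{\alpha,\sigma,\C},G_{\alpha,\sigma})$, I apply $\mathcal Z_{A_\C}$. Since $m_*\ge 0$ forces every power vector $\beta$ to satisfy $\Re(\beta_{-1})=0$ (indeed $\beta_{-1}=0$ when $m_*\ge 1$, and $\Re(\beta_{-1})=0$ when $m_*=0$), Lemma \ref{Aioinv} applies to each coefficient $(A_\C+\beta_{-1}\mathbb I)^{-1}\xi_{\beta,\gamma}$, landing in $G_{\alpha+1,\sigma,\C}$, and \eqref{Acinv} preserves the conjugation symmetry \eqref{xiconj}. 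The power vectors are unchanged by $\mathcal Z_{A_\C}$, so the decay exponent stays $-\mu_n$ and the $\mathcal E_\C(m_*,\cdot)$-membership persists. This yields \eqref{qreg}. Finally, \eqref{Breg} follows by repeating the bilinear-term analysis above with the now-established \eqref{qreg}, and \eqref{chireg} follows by repeating the $\chi_n$-analysis above (or is vacuous when $m_*\ge 1$).

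The main obstacle I anticipate is the bookkeeping around the product structure: verifying carefully that the classes $\widehat{\mathscr P}_{m_*}(\cdot,\cdot,-\mu,X_\C,X)$ are closed under (i) multiplication by a factor from $\widehat{\mathscr P}_{m_*}^+$ or, more precisely, by $W_j$, and (ii) the bilinear pairing via $B_\C$, in both cases keeping track of the real-part-zero condition on $\beta_{-1},\dots,\beta_{m_*-1}$, the equality $\Re\beta_{m_*}=-\mu$, and the conjugation-symmetry \eqref{xiconj}. This is mostly a careful but routine combination of \eqref{samespace}, \eqref{BCbar}, \eqref{BCas}, Lemma \ref{Aioinv}, \eqref{Acinv}, the properties \ref{R0}--\ref{R2} following Definition \ref{defMR}, and property \ref{Cf} following Definition \ref{Hclass} — no single deep step, but several places where a slip in the index algebra (e.g. forgetting $\mu_m+\mu_j=\mu_n$ is exactly the condition forcing the $z_{m_*}$-exponent to be right, or forgetting that $W_j$'s $\beta_{-1}$ vanishes) would break the argument.
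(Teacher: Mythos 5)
Your overall strategy is the same as the paper's: induction on $n$, establish that the argument of $\mathcal Z_{A_\C}$ lies in $\widehat{\mathscr P}_{m_*}(M_n,\widetilde M_n,-\mu_n,G_{\alpha,\sigma,\C},G_{\alpha,\sigma})$ by treating $p_n$, the bilinear terms $B_\C(q_m,q_j)$, and $\chi_n$ separately, and then invoke the boundedness of $(A_\C+\beta_{-1}\mathbb I)^{-1}$ together with the conjugation relation \eqref{Acinv}. You are essentially reconstructing, from the primitives \eqref{BCbar}, \eqref{BCas}, Lemma \ref{Aioinv}, and \eqref{Acinv}, what the paper packages into Lemmas \ref{Blem}, \ref{invar4}, and \ref{Wplem}; that is a legitimate route and not a genuinely different proof.

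However, there is a concrete error in your handling of $\chi_n$. You assert that $W_j\in\widehat{\mathscr P}_{m_*}^+(\cdot)\subset\widehat{\mathscr P}_0(\cdot,\cdot,0,\C,\R)$ and conclude that multiplying by $W_j$ ``does not change the decay exponent.'' This is false and, moreover, internally inconsistent with your stated target: you want $W_j\,\partial q_\lambda/\partial\zeta_j$ in $\widehat{\mathscr P}_0(M_\lambda,\widetilde M_\lambda,-\mu_\lambda-1,\cdot)$, but if $W_j$ carried decay exponent $0$ then $W_j\,\partial q_\lambda/\partial\zeta_j$ would only have exponent $-\mu_\lambda$, since $\partial q_\lambda/\partial\zeta_j$ has exponent $-\mu_\lambda$ by property \ref{Cf}. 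In fact the $W_k$ are \emph{not} in $\widehat{\mathscr P}_{m_*}^+$; that class is for the substitution functions $Z_k$. By construction $W_1=\mathcal R Z_1$ and $\mathcal R$ lowers the $\mathscr P_0$-exponent by one (your own fact \ref{R2}), and inductively every $W_k$ inherits this. The paper records exactly this in Lemma \ref{Wlem}: $W_k\in\widehat{\mathscr P}_0(s_k,k-1,-1,\C,\R)$, with decay exponent $-1$. That $-1$, contributed by the $W_j$ factor via Lemma \ref{prodreal}, is precisely what drops the exponent of $W_j\,\partial q_\lambda/\partial\zeta_j$ from $-\mu_\lambda$ to $-\mu_\lambda-1=-\mu_n$ and makes $\chi_n$ land in the right class. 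Your parenthetical justification (``whose building blocks $\mathcal R Z_k$ and $W_j\partial Z_k/\partial\zeta_j$ inherit the $\beta_{-1}=0$ and $\Re\beta=\mathbf 0$ properties from \eqref{betaplus}'') is also wrong for the same reason: $\mathcal R Z_k$ does not have $\Re\beta=\mathbf 0$; the $z_0$-slot gains a $-1$. To repair the argument, replace this claim with Lemma \ref{Wlem} (or rederive it inductively from \eqref{Wk} using fact \ref{R2} and Lemma \ref{prodreal}); then both $\mathcal R q_\lambda$ and each $W_j\,\partial q_\lambda/\partial\zeta_j$ correctly carry exponent $-\mu_\lambda-1$, which is what the paper's invocation of \eqref{pest4} packages in one step.
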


Our first main result on the asymptotic expansion of the Leray--Hopf weak solutions is the next theorem.

\begin{theorem}\label{mainthm}
Under Assumption \ref{B1}, any Leray--Hopf weak solution $u(t)$  of \eqref{fctnse} has the asymptotic expansion, in the sense of Definition \ref{Hexpand},
 \beq\label{uexpand}
u(t)\sim  \sum_{n=1}^\infty q_n\left(\widehat\LL_{M_n}(t),\widehat{\mathcal Y}_{\widetilde M_n}(t)\right) \text{ in }G_{\alpha+1-\rho,\sigma}\text{ for any } \rho \in (0,1).
 \eeq
\end{theorem}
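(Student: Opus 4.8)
\textbf{Proof strategy for Theorem \ref{mainthm}.}

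The plan is to proceed by induction on $N$, establishing at each stage that the partial sum $u_N(t)\eqdef\sum_{n=1}^N q_n(\widehat\LL_{M_n}(t),\widehat{\mathcal Y}_{\widetilde M_n}(t))$ approximates $u(t)$ with a remainder decaying like $L_{m_*}(t)^{-\mu}$ for some $\mu>\mu_N$, in the norm $|\cdot|_{\alpha+1-\rho,\sigma}$. First I would record, via Theorem \ref{Fthm2} applied with $F(t)=L_{m_*}(T_*+t)^{-\mu_1}$ (using that this $F$ satisfies the hypotheses by \eqref{Lsh2}), that $u(t)$ is eventually a regular solution lying in $G_{\alpha+1-\varep,\sigma}$ and decaying at least like $L_{m_*}(t)^{-\mu_1}$; this gives the base case after verifying that $q_1=\mathcal Z_{A_\C}p_1=A_\C^{-1}p_1$ (or $(A_\C+\beta_{-1}\mathbb I)^{-1}$-type) captures the leading term. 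The engine for the inductive step is Theorem \ref{Fode}, the asymptotic approximation for solutions of the linearized NSE: writing $v_N=u-u_N$, one computes the equation satisfied by $v_N$, namely $v_N' + A_\C v_N + (\text{linear coupling with }u) = h_N(t)$, where $h_N(t)$ collects (i) the tail $f(t)-\sum_{n\le N}p_n(\cdots)$ of the force expansion, (ii) the bilinear cross terms $B_\C(q_m,q_j)$ with $\mu_m+\mu_j>\mu_N$ not yet cancelled, and (iii) the time-derivative defect $\frac{d}{dt}[q_n(\widehat\LL_{M_n}(t),\widehat{\mathcal Y}_{\widetilde M_n}(t))] + A_\C q_n(\cdots) - (\text{corresponding piece of }f)$.

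The crucial algebraic point is that the recursive definition \eqref{qn}--\eqref{chin} is engineered precisely so that this defect has leading decay strictly faster than $L_{m_*}(t)^{-\mu_N}$. Concretely, I would differentiate $q_n(\widehat\LL_{M_n}(t),\widehat{\mathcal Y}_{\widetilde M_n}(t))$ using the chain rule: the $z_{-1}=e^t$ variable contributes $\mathcal M_{-1}q_n$, which combines with $A_\C q_n$ through the operator $\mathcal Z_{A_\C}$ (this is why $\mathcal Z_{A_\C}$ inverts $A_\C+\beta_{-1}\mathbb I$); the $z_j=L_j(t)$ variables for $j\ge 0$ contribute $L_j'(t)\,\mathcal M_j q_n = \prod_{i=0}^{j-1}L_i(t)^{-1}\mathcal M_j q_n$, whose sum over $j$ is exactly $\mathcal R q_n$ evaluated along the curve; and the subordinate variables $\mathcal Y_k(t)$ contribute $\mathcal Y_k'(t)\,\partial q_n/\partial\zeta_k$, where $\mathcal Y_k'(t)$ unwinds recursively through \eqref{Y1}--\eqref{Yk} to produce precisely the combination $W_k$ defined in \eqref{Wk} (with its own $\mathcal R Z_k$ and $\sum_j W_j\partial Z_k/\partial\zeta_j$ structure). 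Thus the term $\chi_n = \mathcal R q_\lambda + \sum_j W_j\,\partial q_\lambda/\partial\zeta_j$ subtracted inside \eqref{qn} when $\mu_\lambda+1=\mu_n$ is exactly what is needed to absorb the derivative defect of the lower-order term $q_\lambda$ into the level-$\mu_n$ equation — this is the "recursive effect" advertised in the introduction. Here Proposition \ref{qregpower} is invoked to guarantee $q_n\in\widehat{\mathscr P}_{m_*}(M_n,\widetilde M_n,-\mu_n,G_{\alpha+1,\sigma,\C},G_{\alpha+1,\sigma})$, so that all these operations stay within the function classes and the Gevrey regularity is not lost (one gains a derivative from $\mathcal Z_{A_\C}$ via Lemma \ref{Aioinv} and pays it back in the bilinear estimate \eqref{BCas}).

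With $h_N(t)$ thus shown to satisfy $|h_N(t)|_{\alpha,\sigma}=\bigo(L_{m_*}(t)^{-\mu})$ for some $\mu>\mu_N$, the final step is to feed the equation for $v_N$ into Theorem \ref{Fode}: this yields that $v_N(t)$ either decays faster than $L_{m_*}(t)^{-\mu_N}$ outright, or else its leading behaviour is captured by a term of the form $\mathcal Z_{A_\C}(\cdots)L_{m_*}(t)^{-\mu_{N+1}}$ plus corrections, which one checks coincides with $q_{N+1}(\widehat\LL_{M_{N+1}}(t),\widehat{\mathcal Y}_{\widetilde M_{N+1}}(t))$ as defined by \eqref{qn}; adding this term advances the induction. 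The integral estimate of Lemma \ref{plnlem}, applied with $\gamma$ a suitable spectral gap, is what converts the forcing decay into solution decay at each stage. I expect the main obstacle to be bookkeeping: verifying that the chain-rule differentiation of the nested substitution $t\mapsto(\widehat\LL_{M_n}(t),\widehat{\mathcal Y}_{\widetilde M_n}(t))$ reproduces exactly the operators $\mathcal R$ and $W_k$, and that the index ranges $M_n$, $\widetilde M_n$, $\mu_n$ (with the addition/unit-increment closure from Assumption \ref{B1}) are consistent so that every $B_\C(q_m,q_j)$ and every $\chi_n$ lands in the correct $\widehat{\mathscr P}_{m_*}$-class with the right decay exponent. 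The analytic core — reducing to the linearized equation and applying Theorem \ref{Fode} together with Lemma \ref{plnlem} — is comparatively routine once the algebra is pinned down.
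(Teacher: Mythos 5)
Your proposal follows the paper's proof almost line by line: the decay of $u$ and $B(u,u)$ via Theorem~\ref{Fthm2}, the induction on $N$ driven by Theorem~\ref{Fode}, the chain-rule computation \eqref{dpYL2} producing $\mathcal M_{-1}q_n+\mathcal R q_n+\sum_j W_j\,\partial q_n/\partial\zeta_j$, the cancellation $X_k=0$ coming from the recursive definition \eqref{qn}--\eqref{chin}, and Proposition~\ref{qregpower} to keep every object in the right $\widehat{\mathscr P}_{m_*}$-class. The one phrase that should be tightened before carrying this out is ``$v_N'+A_\C v_N+(\text{linear coupling with }u)=h_N(t)$'': Theorem~\ref{Fode} requires the fixed linear operator $-A$ alone on the right-hand side, so the cross terms $B(U_N,v_N)$, $B(v_N,U_N)$ and the quadratic $B(v_N,v_N)$ cannot remain on the left; in the paper they are folded into the remainder $g$ (the piece $h_{N+1,1}$ of $H_{N+1}$) and controlled by the inductive decay bound on $v_N$ together with \eqref{AalphaB}, and you would need to do the same.
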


Thanks to property \eqref{qreg} in Propositoion \ref{qregpower}, the asymptotic expansion \eqref{uexpand} is at least well-defined in the sense of Definition \ref{Hexpand}. The proof of Theorem \ref{mainthm} is presented in Section \ref{proofsec}.

Observe in Theorem \ref{mainthm} above that both the force $f(t)$ and the solution $u(t)$ have  infinite series expansions. However, one can also consider the finite sum approximations, which means that there is $N_*\in\N$ such that  \eqref{remain} holds only for $N=N_*$.
If $f(t)$ has a finite sum approximation, then one can follow the proof in Section \ref{proofsec} below to obtain a corresponding finite sum approximation for the solution $u(t)$. For some calculations, see \cite[Theorem 4.1]{CaH1} and \cite[Theorem 5.6]{CaH2}.

\subsection{Statements with real spaces}\label{Rsec}

We reformulate  Theorems \ref{mainthm} using spaces $G_{\sigma,\alpha}$, but not $G_{\sigma,\alpha,\C}$.
For that purpose, we modify Definitions \ref{Hclass}, \ref{plusclass}, \ref{Zsys} and \ref{Hexpand}.

\begin{definition}\label{realPL}
Let $X$ be a real linear space.
Given integers $k\ge m\ge 0$.

\begin{enumerate}[label=\tnum]
 \item For $\ell\ge 1$, define the class $\widehat{\mathcal P}_m(k,\ell,X)$ to be the collection of functions which are the finite sums of the following functions
\beq\label{realpz}
(z,\zeta)\in (0,\infty)^{k+2}\times(0,\infty)^\ell\mapsto z^\beta \zeta^\gamma 
\left(\prod_{j=1}^{j_*} \sigma_j(\omega_j z_{\kappa_j})\right) 
\left(\prod_{j=1}^{\tilde j_*} \tilde\sigma_j(\tilde\omega_j \ln\zeta_{\tilde \kappa_j}) \right)  \xi,
\eeq
where $\xi\in X$, 
 $\beta\in \mathcal E_\R(m,k,0)$, $\gamma\in \R^\ell$, 
 $j_*,\tilde j_*\in\N$,
 $\kappa_j\in [0,k]$,  $\tilde \kappa_j\in [1,\ell]$, 
$\omega_j, \tilde\omega_j\in\R$,  
and, $\sigma_j,\tilde\sigma_j$ are functions in the set $\{\cos,\sin\}$.

\item Let  ${\mathcal P}_m(k,X)$,  be defined by the same way as $\widehat{\mathcal P}_m(k,\ell,X)$,  but for functions $p=p(z)$ only.
Denote $\widehat{\mathcal P}_m(k,0,X)=\mathcal P_m(k,X)$. 
\end{enumerate}
\end{definition}

Note that vector $\beta=(\beta_{-1},\beta_0,\ldots,\beta_k)$ in \eqref{realpz} satisfies
\beqs
\beta_{-1}=\beta_0=\ldots=\beta_m=0,\quad \beta_{m+1}, \ldots,\beta_k \in\R.
\eeqs

\begin{definition}\label{plusreal}
 Let $k\ge m\ge 0$. 
 
 \begin{enumerate}[label=\tnum]
\item \label{plri} For $\ell\ge 1$, define $\widehat{\mathcal P}_m^+(k,\ell)$ to a the set of functions in $p\in \widehat{\mathcal P}_m(k,\ell,\R)$  of the form 
\beq\label{prealde}
p(z,\zeta)=p_*(\zeta)+q(z,\zeta),
\eeq 
where 
\begin{enumerate}[label=\rnum]
\item 
$p_*(\zeta)$ is the same as in Definition \ref{plusclass}\ref{plusi}, and 

\item \label{plrb} $q\in \widehat{\mathcal P}_m(k,\ell,\R)$ can be written as a finite sum of the right-hand side of \eqref{realpz} with the same conditions on $\xi,\beta,\gamma,j_*,\tilde j_*,\omega_j,\tilde\omega_j,\kappa_j,\tilde\kappa_j,\sigma_j,\tilde\sigma_j$, and, additionally,
\beq \label{bneg}
\beta<\mathbf 0_{k+2}\text{ and ($\omega_j=0$ whenever $\kappa_j=0$).}
\eeq 
\end{enumerate}

\item Define ${\mathcal P}_m^+(k)$ in the same way as part \ref{plri}  but for functions $p=p(z)$ only. 
Specifically,
\beq\label{prc} 
p(z)=1+q(z),
\eeq 
where $q\in \mathcal P_m(k,\R)$ has the same properties as in the requirement \ref{plrb} of part \ref{plri} above without the presence of $\zeta^\gamma$ and the product $\prod_{j=0}^{j_*} \tilde\sigma_j(\tilde\omega_j \ln \zeta_{\tilde \kappa_j})$. 
Denote $\widehat{\mathcal P}_m^+(k,0)={\mathcal P}_m^+(k)$.
\end{enumerate}
\end{definition}

\begin{remark}\label{newlim}
Assuming $(X,\|\cdot\|_X)$ is a normed space, we denote by $Q(z,\zeta)$ the function in \eqref{realpz}. Then $\|Q(z,\zeta)\|_X\le z^\beta\zeta^\gamma \|\xi\|_X$.
If $\beta<\mathbf 0_{k+2}$  then $Q(\widehat \LL_k(t),\zeta)\to 0$, as $t\to\infty$, uniformly in $\zeta$ in compact subsets of $(0,\infty)^\ell$.
With this observation, we can assert that  the functions $q(z,\zeta)$ and $p(z,\zeta)$ in \eqref{prealde} as well as functions $q(z)$ and $p(z)$ in \eqref{prc} have the corresponding limits in \eqref{limq} and \eqref{limp}.
\end{remark}

\begin{definition}\label{Zreal}
Let $m\in\Z_+$. Define $\mathcal U(m)$ to be  the set of the triples $(\mathcal K,(s_k)_{k\in\mathcal K},(Z_k)_{k\in\mathcal K})$ such that the set $\mathcal K$ and numbers $s_k$ are the same as in Definition \ref{Zsys}, and the functions $Z_k$, for $k\in\mathcal K$, satisfy
$Z_k\in \widehat{\mathcal P}_m^+(s_k,k-1)$.
\end{definition}

For $(\mathcal K,(s_k)_{k\in\mathcal K},(Z_k)_{k\in\mathcal K})\in \mathcal U(m)$, one can see, thanks to Remark \ref{newlim}, that the limits in \eqref{Zlim} and \eqref{limYk} still hold true.

\begin{definition}\label{realex}
Let  $(X,\|\cdot\|_X)$ be a real normed space. Suppose $g$ is a function from $(T,\infty)$ to $X$ for some $T\in\R$. Let  $m_*\in \Z_+$, $(\mathcal K,(s_k)_{k\in\mathcal K},(Z_k)_{k\in\mathcal K})$ belong to $\mathcal U(m_*)$ and $\widehat{\mathcal Y}_k(t)$ be defined by \eqref{Y1}--\eqref{hatY}. 
Let $(\gamma_k)_{k=1}^\infty$, $(M_k)_{k=1}^\infty$, $(\widetilde M_k)_{k=1}^\infty$ be   sequences  as in Definition \ref{Hexpand}.

We say the function $g(t)$ has an asymptotic expansion
\beqs
g(t)\sim \sum_{k=1}^\infty p_k\left(\widehat{\LL}_{M_k}(t),\widehat{\mathcal Y}_{\widetilde M_k}(t)\right) L_{m_*}(t)^{-\gamma_k} , \text{ where $p_k\in \widehat{\mathcal P}_{m_*}(M_k,\widetilde M_k,X)$ for $k\in\N$, }
\eeqs
if, for each $N\in\N$, there is some number $\mu>\gamma_N$ such that
\beq\label{grrem}
\left\|g(t) - \sum_{k=1}^N p_k\left(\widehat{\LL}_{M_k}(t),\widehat{\mathcal Y}_{\widetilde M_k}(t)\right) L_{m_*}(t)^{-\gamma_k} \right\|_X=\bigo( L_{m_*}(t)^{-\mu}).
\eeq
\end{definition}

Note that the condition \eqref{grrem} is the same as \eqref{remain} except for requirements about the specific forms of the functions $Z_k$ and $p_k$. In fact, these forms are interchangeable  by the virtue of Lemma \ref{convert} below.
We now return to equation \eqref{fctnse}.

\begin{assumption}\label{Rhypo} 
There exist   $m_*\in \Z_+$, $(\mathcal K,(s_k)_{k\in\mathcal K},(Z_k)_{k\in\mathcal K})\in \mathcal U(m_*)$,
 and real numbers $\sigma\ge 0$, $\alpha\ge 1/2$,  
 sequences  $(\mu_n)_{n=1}^\infty$, $(M_n)_{n=1}^\infty$ and $(\widetilde M_n)_{n=1}^\infty$ as in Assumption \ref{B1},  
and functions 
\beq \label{pnreal}
\widehat p_n\in \widehat{\mathcal P}_{m_*}(M_n,\widetilde M_n,G_{\alpha,\sigma})\text{  for all $n\in\N$ }
\eeq 
such that $f(t)$ has the asymptotic expansion, in the sense of Definition \ref{realex},
\beq\label{freal}
f(t)\sim \sum_{n=1}^\infty \widehat p_n\left(\widehat{\LL}_{M_n}(t),\widehat{\mathcal Y}_{\widetilde M_n}(t)\right) L_{m_*}(t)^{-\mu_n} \text{ in }G_{\alpha,\sigma},
\eeq
where the functions $\widehat{\mathcal Y}_k(t)$ are defined by \eqref{Y1}--\eqref{hatY}. 
\end{assumption}

\begin{theorem}\label{thm3}
Under Assumption \ref{Rhypo}, there exist functions $\widehat q_n\in \widehat{\mathcal P}_{m_*}(M_n,\widetilde M_n,G_{\alpha+1,\sigma})$, for  $n\in\N$, such that any Leray--Hopf weak solution $u(t)$ of \eqref{fctnse}  admits the asymptotic expansion, in the sense of Definition \ref{realex},
\beq\label{ureal}
u(t)\sim \sum_{n=1}^\infty \widehat q_n\left(\widehat{\LL}_{M_n}(t),\widehat{\mathcal Y}_{\widetilde M_n}(t)\right) L_{m_*}(t)^{-\mu_n}  \text{ in $G_{\alpha+1-\rho,\sigma}$ for any $\rho\in(0,1)$.}
\eeq
\end{theorem}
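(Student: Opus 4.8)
The plan is to deduce Theorem~\ref{thm3} from Theorem~\ref{mainthm} by passing between the real ``trigonometric'' formulation (Definitions~\ref{realPL}--\ref{realex}) and the complex ``power'' formulation (Definitions~\ref{Hclass}--\ref{Hexpand}). The bridge is the conversion Lemma~\ref{convert} (announced in the introduction): a function in $\widehat{\mathcal P}_m(k,\ell,X)$ can be rewritten as a function in $\widehat{\mathscr P}_m(k,\ell,0,X_\C,X)$ and vice versa, using $\cos\theta = (e^{i\theta}+e^{-i\theta})/2$, $\sin\theta = (e^{i\theta}-e^{-i\theta})/(2i)$ together with $z_{\kappa}^{i\omega} = \cos(\omega\ln z_\kappa) + i\sin(\omega\ln z_\kappa)$ and $\zeta_{\tilde\kappa}^{i\tilde\omega}=\cos(\tilde\omega\ln\zeta_{\tilde\kappa})+i\sin(\tilde\omega\ln\zeta_{\tilde\kappa})$; the conjugation-symmetry condition \eqref{xiconj} is exactly what makes the resulting complex sum real-valued, and the sign conditions $\beta<\mathbf 0$ in \eqref{bneg} match $\Re\beta<\mathbf 0$ in \eqref{bposcond}. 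The same lemma converts a system $(\mathcal K,(s_k),(Z_k))\in\mathcal U(m_*)$ into one in $\mathscr U(m_*)$ with the same $\mathcal Y_k(t)$, and converts the real asymptotic expansion in the sense of Definition~\ref{realex} into one in the sense of Definition~\ref{Hexpand} (and back), term by term, since the remainder estimates \eqref{grrem} and \eqref{remain} are identical once the decaying factor $L_{m_*}(t)^{-\gamma_k}$ is absorbed via \eqref{qpequiv}.

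First I would invoke Lemma~\ref{convert} to convert the data of Assumption~\ref{Rhypo} into the data of Assumption~\ref{B1}: the triple $(\mathcal K,(s_k),(Z_k))\in\mathcal U(m_*)$ becomes a triple in $\mathscr U(m_*)$ with identical $\widehat{\mathcal Y}_k(t)$; each $\widehat p_n\in\widehat{\mathcal P}_{m_*}(M_n,\widetilde M_n,G_{\alpha,\sigma})$ becomes some $\widetilde p_n\in\widehat{\mathscr P}_{m_*}(M_n,\widetilde M_n,0,G_{\alpha,\sigma,\C},G_{\alpha,\sigma})$, and multiplying by $L_{m_*}(t)^{-\mu_n}$ (i.e. by $z_{m_*}^{-\mu_n}$, using \eqref{qpequiv}) gives $p_n\in\widehat{\mathscr P}_{m_*}(M_n,\widetilde M_n,-\mu_n,G_{\alpha,\sigma,\C},G_{\alpha,\sigma})$; and the expansion \eqref{freal} becomes \eqref{fseq}. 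Thus Assumption~\ref{B1} holds for the same $\sigma,\alpha,(\mu_n),(M_n),(\widetilde M_n)$.

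Next I would apply Theorem~\ref{mainthm}: any Leray--Hopf weak solution $u(t)$ has the expansion \eqref{uexpand} with the explicitly constructed $q_n\in\widehat{\mathscr P}_{m_*}(M_n,\widetilde M_n,-\mu_n,G_{\alpha+1,\sigma,\C},G_{\alpha+1,\sigma})$ from Definition~\ref{construct} and Proposition~\ref{qregpower}, valid in $G_{\alpha+1-\rho,\sigma}$ for any $\rho\in(0,1)$. Finally I would convert back: writing $q_n(z,\zeta)=\widehat q_n(z,\zeta)\,z_{m_*}^{-\mu_n}$ with $\widehat q_n\in\widehat{\mathscr P}_{m_*}(M_n,\widetilde M_n,0,G_{\alpha+1,\sigma,\C},G_{\alpha+1,\sigma})$ via \eqref{qpequiv}, Lemma~\ref{convert} produces $\widehat q_n^{\,\mathrm{real}}\in\widehat{\mathcal P}_{m_*}(M_n,\widetilde M_n,G_{\alpha+1,\sigma})$ representing the same function on the relevant domain, and the expansion \eqref{uexpand} translates into \eqref{ureal} in the sense of Definition~\ref{realex}, with the same remainder order $L_{m_*}(t)^{-\mu}$. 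This yields the desired $\widehat q_n$ and completes the proof.

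The main obstacle is making the conversion precise and checking that it respects \emph{all} the structural constraints simultaneously. In particular one must verify that the $Z_k$ obtained from $\mathcal U(m_*)$ land in $\widehat{\mathscr P}_{m_*}^+(s_k,k-1)$ — that the decomposition \eqref{prealde} into $p_*(\zeta)+q(z,\zeta)$ transforms into \eqref{pdecomp} with $q$ satisfying \eqref{bposcond}, including the delicate requirement $\beta_{-1}=0$ and $\Re\beta<\mathbf 0$, which relies on the condition ``$\omega_j=0$ whenever $\kappa_j=0$'' in \eqref{bneg} — and that the recursive substitutions \eqref{Zk}, hence the functions $\mathcal Y_k(t)$, are genuinely unchanged. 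One must also confirm that the conversion commutes with forming finite partial sums so that the remainder estimates match exactly, and that the power vectors on the $z$-side stay in $\mathcal E_\C(m_*,M_k,\cdot)$ after the trigonometric-to-exponential rewriting (the real exponents $\beta_j$ for $j>m_*$ remain real, and the purely imaginary contributions $i\omega_j$ only enter coordinates $\kappa_j\ge 1$, so the first $m_*+1$ components behave correctly). These are bookkeeping-heavy but routine verifications, and they are precisely the content of Lemma~\ref{convert}; once that lemma is in hand, the deduction of Theorem~\ref{thm3} is immediate.
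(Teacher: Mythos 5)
Your high-level plan is the same as the paper's: convert the real (trigonometric) data of Assumption~\ref{Rhypo} into complex (power) form, apply Theorem~\ref{mainthm}, and convert the resulting expansion back. However, there is a genuine gap in the conversion back, and it is not ``routine bookkeeping contained in Lemma~\ref{convert}'' as you suggest.

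The issue is the number of logarithmic variables. Lemma~\ref{convert}\ref{cv2} in general takes $q\in\widehat{\mathscr P}_{m}(k,\ell,0,X_\C,X)$ to some $p\in\widehat{\mathcal P}_{m}(k+1,\ell,X)$: the index $k$ is bumped up to $k+1$, because a purely imaginary power $z_k^{i\omega}$ evaluated at $z_k=L_k(t)$ produces $\cos(\omega L_{k+1}(t))$ and $\sin(\omega L_{k+1}(t))$, which require the extra variable $z_{k+1}$. The lemma only gives $p\in\widehat{\mathcal P}_m(k,\ell,X)$ under the extra hypothesis \eqref{spec2} that $\Im(\beta_k)=0$ for every power vector (the ``IP($k$)'' condition). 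Theorem~\ref{thm3} asserts $\widehat q_n\in\widehat{\mathcal P}_{m_*}(M_n,\widetilde M_n,G_{\alpha+1,\sigma})$ with the \emph{same} $M_n$ as in Assumption~\ref{Rhypo}, so you must show the functions $\widetilde q_n$ produced by Theorem~\ref{mainthm} satisfy IP($M_n$). This does not come for free: while Lemma~\ref{convert}\ref{cv1} guarantees the converted $\widetilde p_n$ and $\mathscr Z_k$ satisfy IP($M_n$) and IP($s_k$), the $\widetilde q_n$ are built by the recursion \eqref{qn}--\eqref{chin}, which involves $\mathcal Z_{A_\C}$, $B_\C$, $\mathcal R$, $\partial/\partial\zeta_j$, and products with $W_j$. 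One must verify by a separate induction (first that each $W_k$ satisfies IP($s_k$), then that $B_\C(\widetilde q_m,\widetilde q_j)$ and $\chi_n$ satisfy IP($M_n$), hence $\widetilde q_n$ does) that the IP property propagates through every one of these operations. That induction is the actual content of the paper's proof; without it, your argument only yields $\widehat q_n\in\widehat{\mathcal P}_{m_*}(M_n+1,\widetilde M_n,G_{\alpha+1,\sigma})$, which does not match the stated conclusion.

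Everything else in your sketch — matching $\mathcal Y_k(t)=\mathscr Y_k(t)$, converting the force expansion via \eqref{qpequiv}, checking that the $\beta_{-1}=0$ condition in \eqref{bposcond} survives the conversion because of ``$\omega_j=0$ whenever $\kappa_j=0$'' in \eqref{bneg} — is correct and is exactly what the paper does. The missing piece is the IP induction.
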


The proof of Theorem \ref{thm3} will be given in Section \ref{proofsec}.

\subsection{Scenarios and examples}\label{example}

\begin{remark}\label{realrmk}
Consider the case in subsection \ref{Csec} when 
the functions $Z_k$, for $k\in\mathcal K$, belong to the class $\widehat{\mathscr P}_{m_*}(s_k,k-1,0,\R)$, corresponding to $\K=\R$ in Definition \ref{Hclass}, and, similarly, the functions $p_n$ in Assumption \ref{B1} belong to the class $\widehat{\mathscr P}_{m_*}(M_n,\widetilde M_n,-\mu_n,G_{\alpha,\sigma})$. Then there is  no need for the complexification and the proof of Theorem \ref{mainthm} is much simpler. All the functions $q_n$ in Definition \ref{construct} belong to  $\widehat{\mathscr P}_{m_*}(M_n,\widetilde M_n,-\mu_n,G_{\alpha+1,\sigma})$, the bilinear form $B_\C$ in \eqref{qn} is simply $B$, and, thanks to the last remark after Definition \ref{defZA}, the operator $\mathcal Z_{A_\C}$ in \eqref{qn} is simply $A^{-1}$.
\end{remark}

\begin{scenario}\label{scen1}
Assume $\alpha\ge 1/2$, $\sigma\ge 0$, and the following asymptotic expansion, as in Definition \ref{Hexpand},  for the force $f(t)$
\beq\label{fsam1}
f(t)\sim \sum_{n=1}^\infty p_n\left(\widehat{\LL}_{M_n}(t),\widehat{\mathcal Y}_{\widetilde M_n}(t)\right)
 \text{ in }G_{\alpha,\sigma}, 
\eeq
where $p_n\in \widehat{\mathscr P}_{m_*}(M_n,\widetilde M_n,-\gamma_n,G_{\alpha,\sigma,\C},G_{\alpha,\sigma})$.
Set
\begin{align} \label{genS1}
\mathcal S&=\left\{\sum_{j=1}^N m_j \gamma_j+\kappa:N\in\N,m_j\in \Z_+,\kappa\in \Z_+,\sum_{j=1}^\N m_j^2\ne 0
 \right\} 
 \text{ for }m_*=0,\\
\label{genS2}
\mathcal S&=\left\{\sum_{j=1}^N m_j \gamma_j:N\in\N,m_j\in \Z_+, \sum_{j=1}^\N m_j^2\ne 0\right\} 
\text{ for }m_*\ge 1.
\end{align} 
Then the set $\{\gamma_n:n\in\N\}$ is a subset of $\mathcal S$. Moreover, $\mathcal S$ preserves the addition and, in the case $m_*=0$, the unit increment.
We arrange $\mathcal S$ to be a sequence $(\mu_n)_{n=1}^\infty$ of strictly increasing numbers.
After inserting zero functions into the series in \eqref{fsam1} and reindexing $p_n$, $M_n$, $\widetilde M_n$, we obtain  from \eqref{fsam1} the asymptotic expansion \eqref{fseq} with \eqref{pncond}.
\end{scenario}

\begin{scenario}\label{scen2}
Assume in Definition \ref{Zsys} that $\mathcal K=\{1,2,\ldots,K^*\}$ and there is $N_*\in\N$ such that $s_k=N_*$ for $1\le k\le K^*$.
In particular, 
$Z_k\in \widehat{\mathscr P}_m(N_*,k-1)$ for $1\le k\le K^*$, $\mathcal Y_1(t)=Z_1(\widehat{\LL}_{N_*}(t))$, and $\mathcal Y_k(t)=Z_k(\widehat{\LL}_{N_*}(t),\mathcal Y_1(t),\ldots,\mathcal Y_{k-1}(t))$ for $2\le k\le K^*$.
Assume, additionally, that there is a fixed number $N\in\N$, and positive numbers $\gamma_n$
and functions $p_n\in \widehat {\mathscr P}_{m_*}(N_*,K^*,-\gamma_n,G_{\alpha,\sigma,\C},G_{\alpha,\sigma})$
 for $1\le n\le N$, such that 
\beq\label{fsam2}
\left|f(t)-\sum_{n=1}^N p_n\left(\widehat{\LL}_{N_*}(t),\widehat{\mathcal Y}_{K^*}(t)\right)\right|_{\alpha,\sigma}=\bigo(L_{m_*}(t)^{-\gamma})\text{ for all }\gamma>\gamma_K.
\eeq
Let $\mathcal S$ be defined as in \eqref{genS1} and \eqref{genS2} but with the number $N$ already being fixed above. Let $(\mu_n)_{n=1}^\infty$ be defined in the same way as in Scenario \ref{scen1}. Then, again, we obtain  from \eqref{fsam2} the asymptotic expansion \eqref{fseq}, or more specifically, 
\beqs
f(t)\sim \sum_{n=1}^\infty p_n\left(\widehat{\LL}_{N_*}(t),\widehat{\mathcal Y}_{K^*}(t)\right)
 \text{ in }G_{\alpha,\sigma}, 
\eeqs
where $p_n\in \widehat{\mathscr P}_{m_*}(N_*,K^*,-\mu_n,G_{\alpha,\sigma,\C},G_{\alpha,\sigma})$.
Note, in this case, that $p_n=0$ for sufficient large $n$.
Clearly, the scenario \eqref{fsam2} includes the case when we have the equation
\beqs
f(t)=\sum_{n=1}^{N} p_n\left(\widehat{\LL}_{N_*}(t),\widehat{\mathcal Y}_{K^*}(t)\right).
\eeqs
\end{scenario}

\begin{example}\label{eg1}
Let $\alpha\ge 1/2$ and $\sigma\ge 0$, and $\xi_1,\xi_2\in G_{\alpha,\sigma}$.
Assume, for large $t>0$,
\begin{align*}
f(t)&=\frac1{\ln t} \left \{  (\ln\ln t+1)^{1/3} + \left [ (L_3(t)+5L_4(t))^{1/2}+1\right]^{3/5}\right\}\xi_1\\
&\quad + \frac1{(\ln t)^{3}} \left\{( \ln\ln t+L_3(t)-1)^{1/4}+L_6(t)\right\}^{1/2}\xi_2.
\end{align*}
Let $m_*=1$.
We rewrite
\begin{align*}
f(t)&=\frac1{\ln t} \left\{  (\ln\ln t)^{1/3}\left (1+\frac1{\ln\ln t}\right)^{1/3} + L_3(t)^{3/10} \left[ \left(1+\frac{5L_4(t)}{L_3(t)} \right)^{1/2}+\frac1{L_3(t)^{1/2}}\right]^{3/5}\right\}\xi_1\\
&\quad + \frac{(\ln\ln t)^{1/8}}{(\ln t)^3}
\left \{ \left(1+\frac{L_3(t)-1}{\ln\ln t}\right)^{1/4}+\frac{L_6(t)}{(\ln\ln t)^{1/4}}\right\}^{1/2}\xi_2.
\end{align*}
Set 
\begin{align*}
\zeta_1&=Z_1(z_{-1},z_0,z_1,z_2)=1+z_2^{-1},\quad
\zeta_2=Z_2(z_{-1},\ldots,z_4)=1+5z_3^{-1}z_4,\\
\zeta_3&=Z_3(z_{-1},\ldots,z_4,\zeta_1,\zeta_2)=\zeta_2^{1/2}+z_3^{-1/2},\quad
\zeta_4=Z_4(z_{-1},\ldots,z_4,\zeta_1,\zeta_2,\zeta_3)=1+z_2^{-1}z_3-z_2^{-1},\\
\zeta_5&=Z_5(z_{-1},\ldots,z_6,\zeta_1,\ldots \zeta_4)=\zeta_4^{1/4}+z_2^{-1/4}z_6,
\end{align*}
and
\begin{align*}
p_1(z_{-1},\ldots,z_4,\zeta_1,\ldots,\zeta_3)&=z_1^{-1}\left (z_2^{1//2}\zeta_1^{1/3}+z_3^{3/4}\zeta_3^{3/5}\right)\xi_1,\\
p_2(z_{-1},\ldots,z_6,\zeta_1,\ldots,\zeta_5)&=z_1^{-3} z_2^{1/8} \zeta_5^{1/2}\xi_2.
\end{align*}
Then 
$f(t)= p_1(\widehat{\LL}_6(t),\widehat{\mathcal Y}_5(t))+p_2(\widehat{\LL}_6(t),\widehat{\mathcal Y}_5(t)).$
According to Scenario \ref{scen2} and Remark \ref{realrmk}, we can apply Theorem \ref{mainthm} to find an asymptotic expansion for any Leray--Hopf weak solution $u(t)$, namely,
$$u(t)\sim \sum_{n=1}^\infty \frac1{(\ln t)^n} \widehat q_n(\widehat{\LL}_6(t),\widehat{\mathcal Y}_5(t) )
\text{ in $G_{\alpha+s,\sigma}$ for any $s\in[0,1)$,}
$$
where $\widehat q_n= \widehat q_n(z_{-1},\ldots,z_6,\zeta_1,\ldots,\zeta_5) \in \widehat{\mathscr P}_1(6,5,0,G_{\alpha+1,\sigma})$.
 \end{example}

\begin{example}\label{eg2}
Let $\xi\in V$ and, for large $t$,
$$ f(t)=\frac1{t^{1/3}}\sin (2t) (\ln t)^{2/3} \cos(\ln\ln t)\left (2+\frac{\cos(4\ln(1+3/\ln t))}{\ln\ln t}\right)^{1/2} \xi.$$
First, let $m_*=0$ and rewrite
$$ f(t)=\frac{\sqrt 2}{t^{1/3}}\sin(2 t) (\ln t)^{2/3}\cos(\ln\ln t) \left (1+\frac{\cos(4\ln(1+3/\ln t))}{2\ln\ln t}\right)^{1/2} \xi.$$
Set 
\begin{align*}
&\zeta_1=Z_1(z_{-1},z_0,z_1)=1+3z_1^{-1},\quad
\zeta_2=Z_2(z_1.z_2,\zeta_1)=1+\frac{\cos(4\ln \zeta_1)}{2z_2},\\
&\widehat p_1(z_{-1},\ldots,z_2,\zeta_1,\zeta_2)=\sqrt 2 z_1^{2/3} \zeta_2^{1/2}\sin(2 z_0)(\cos z_2) \xi\text{ and }\gamma_1=1/3.
\end{align*}
Then we have
$f(t)= t^{-\gamma_1}\widehat p_1(\widehat{\LL}_2(t),\widehat{\mathcal Y}_2(t)).$
Using the same arguments as for Scenario \ref{scen2}, we can apply Theorem \ref{thm3} to $\alpha=1/2$, $\sigma=0$.
It results in the asymptotic expansion, in the sense of Definition \ref{realex}, 
$$u(t)\sim \sum_{n=1}^\infty \frac1{t^{n/3}} \widehat q_n(\widehat{\LL}_2(t),\widehat{\mathcal Y}_2(t) )
\text{ in $\mathcal D(A^s)$ for any $s\in[0,3/2)$,}$$
for any Leray--Hopf weak solution $u(t)$, 
 where the functions $$\widehat q_n= \widehat q_n(z_{-1},z_0,z_1,z_2,\zeta_1,\zeta_2) \in \widehat{\mathcal P}_1(2,2,0,\mathcal D(A^{3/2}))
 \text{ are independent of $u(t)$.}$$
\end{example}

\section{Preparations for the proofs}\label{prepsec}

\subsection{Properties of spaces and operators}\label{mainprops}

We establish some properties for functions and operators involved in the statements of Theorems \ref{mainthm} and \ref{thm3}. 

\begin{lemma}\label{prodreal}
Consider either 
\begin{enumerate}[label=\rnum]
\item \label{pa}   $\K=\C$ or $\R$, $X$ is a linear space over $\K$, $\tilde X=X$ and $\tilde \K=\K$, or

\item\label{pb}  $X$ is a real linear space, $X_\C$ is its complexification, $\tilde X=(X_\C,X)$ and $\tilde \K=(\C,\R)$.
\end{enumerate}

 If $p\in \widehat{\mathscr P}_m(k,\ell,\mu,\tilde X)$ and $q\in \widehat{\mathscr P}_m(k,\ell,\mu',\tilde \K)$, then 
 the product $qp$ belongs to $\widehat{\mathscr P}_m(k,\ell,\mu+\mu',\tilde X) $.
\end{lemma}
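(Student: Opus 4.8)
The plan is to prove this by direct computation on the defining sums, treating cases \ref{pa} and \ref{pb} simultaneously wherever possible and only separating them for the conjugation bookkeeping in case \ref{pb}. First I would write $p$ in the form \eqref{pzedef},
\beqs
p(z,\zeta)=\sum_{(\beta,\gamma)\in S} z^\beta\zeta^\gamma\,\xi_{\beta,\gamma},
\eeqs
with $S\subset \mathcal E_\K(m,k,\mu)\times\K^\ell$ finite and $\xi_{\beta,\gamma}\in \tilde X$ (in case \ref{pb}, $\xi_{\beta,\gamma}\in X_\C$ and $S$ preserves conjugation with $\xi_{\overline\beta,\overline\gamma}=\overline{\xi_{\beta,\gamma}}$), and similarly
\beqs
q(z,\zeta)=\sum_{(\beta',\gamma')\in S'} z^{\beta'}\zeta^{\gamma'}\,c_{\beta',\gamma'},
\eeqs
with $S'\subset\mathcal E_\K(m,k,\mu')\times\K^\ell$ finite and $c_{\beta',\gamma'}\in\tilde\K$ (a scalar; in case \ref{pb}, $c_{\beta',\gamma'}\in\C$ with $c_{\overline{\beta'},\overline{\gamma'}}=\overline{c_{\beta',\gamma'}}$). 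Multiplying and using the power-function identities $z^\beta z^{\beta'}=z^{\beta+\beta'}$ and $\zeta^\gamma\zeta^{\gamma'}=\zeta^{\gamma+\gamma'}$ recalled after \eqref{ep}, I get
\beqs
(qp)(z,\zeta)=\sum_{(\beta,\gamma)\in S}\sum_{(\beta',\gamma')\in S'} z^{\beta+\beta'}\zeta^{\gamma+\gamma'}\,c_{\beta',\gamma'}\xi_{\beta,\gamma},
\eeqs
which is a finite sum of the required monomial type, with coefficients $c_{\beta',\gamma'}\xi_{\beta,\gamma}\in\tilde X$ (scalar times vector). Reindexing by $T=\{(\beta+\beta',\gamma+\gamma'):(\beta,\gamma)\in S,(\beta',\gamma')\in S'\}$ and collecting coefficients shows $qp\in\widehat{\mathscr P}(k,\ell,\tilde X)$.

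Next I would verify the power-vector constraint, i.e.\ that each $\beta+\beta'$ lies in $\mathcal E_\K(m,k,\mu+\mu')$. Since $\beta\in\mathcal E_\K(m,k,\mu)$ means $\Re(\beta_j)=0$ for $-1\le j<m$ and $\Re(\beta_m)=\mu$, and likewise for $\beta'$ with $\mu'$, additivity of $\Re$ gives $\Re(\beta_j+\beta'_j)=0$ for $-1\le j<m$ and $\Re(\beta_m+\beta'_m)=\mu+\mu'$, exactly the membership in $\mathcal E_\K(m,k,\mu+\mu')$. This settles case \ref{pa} completely.

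For case \ref{pb} the only extra point is the conjugation-symmetry conditions of Definition \ref{realH}: I must check that the index set $T$ preserves conjugation and that the collected coefficient attached to $(\overline{\beta+\beta'},\overline{\gamma+\gamma'})$ is the conjugate of the one attached to $(\beta+\beta',\gamma+\gamma')$. This follows because $\overline{\beta+\beta'}=\overline\beta+\overline{\beta'}$ and the maps $(\beta,\gamma)\mapsto(\overline\beta,\overline\gamma)$ are involutions of $S$ and $S'$ respectively; pairing the $(\beta,\gamma),(\beta',\gamma')$ term with the $(\overline\beta,\overline\gamma),(\overline{\beta'},\overline{\gamma'})$ term and using $\xi_{\overline\beta,\overline\gamma}=\overline{\xi_{\beta,\gamma}}$, $c_{\overline{\beta'},\overline{\gamma'}}=\overline{c_{\beta',\gamma'}}$ together with $\overline{c\,\xi}=\overline c\,\overline\xi$ gives the desired relation \eqref{xiconj} for the product. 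The main (and essentially only) obstacle is purely bookkeeping: after summing the double sum one may have several pairs $((\beta,\gamma),(\beta',\gamma'))$ landing on the same index of $T$, so one must argue that the aggregated coefficient still satisfies the conjugation identity; this is immediate once one notes that conjugation permutes the set of contributing pairs compatibly, so the aggregated coefficients come in conjugate pairs. I would also note at the end that when $\mu=\mu'=0$ this says $\widehat{\mathscr P}_m(k,\ell,0,\tilde\K)$ acts on $\widehat{\mathscr P}_m(k,\ell,0,\tilde X)$ by multiplication, a fact used repeatedly later.
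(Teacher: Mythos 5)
Your proof is correct and follows the same direct-computation strategy as the paper: expand, multiply, use additivity of $\Re$ to place $\beta+\beta'$ in $\mathcal E_\K(m,k,\mu+\mu')$, and check the conjugation symmetry in Case~\ref{pb}. The only presentational difference is that the paper, appealing to \eqref{symsum} and the fact that $\widehat{\mathscr P}_m(k,\ell,\cdot,X_\C,X)$ is closed under addition, reduces without loss of generality to $p$ and $q$ each being a single conjugate pair, which makes the aggregation/collecting discussion you carry out unnecessary.
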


\begin{proof}
  Case \ref{pa} can be proved by simple calculations using the form \eqref{pzedef} for $p$ and $q$. We consider Case \ref{pb}.
 Thanks to \eqref{symsum}, it suffices to assume 
 \beq\label{pqform}
 p(z,\zeta)=z^\beta\zeta^\gamma \xi + z^{\overline \beta}\zeta^{\overline \gamma}\,\overline \xi \text{ and }
 q(z,\zeta)=z^{\beta'}\zeta^{\gamma'} c + z^{\overline {\beta'}}\zeta^{\overline {\gamma'}}\overline{c}
 \eeq 
 for some $\xi\in X_\C$ and $c\in\C$.
 We have $ (qp)(z,\zeta)= Q_1(z,\zeta)+ Q_2(z,\zeta)$, where
 \begin{align*}
 Q_1(z,\zeta)&=
  c z^{\beta+\beta'}\zeta^{\gamma+\gamma'} \xi + \overline c z^{\overline\beta+\overline{\beta'}}\zeta^{\overline\gamma+\overline{\gamma'}} \overline\xi,\quad 
Q_2(z,\zeta) =\overline c z^{\beta+\overline{\beta'}}\zeta^{\gamma+\overline{\gamma'}} {\xi} +  c z^{\overline\beta+\beta'}\zeta^{\overline\gamma+\gamma'}\overline \xi.
 \end{align*}
It is clear from these formulas that $Q_1$ and $Q_2$  belong to $\widehat{\mathscr P}_m(k,\ell,\mu+\mu',X_\C)$, and satisfy the comjugation condition \eqref{xiconj}.
Therefore, $Q_1,Q_2$ belong to $\widehat{\mathscr P}_m(k,\ell,\mu+\mu',X_\C,X) $, and, hence, so does their sum $qp$.
\end{proof}

\begin{lemma}\label{derivze}
Consider either 
\begin{enumerate}[label=\rnum]
\item \label{da}  $\K=\C$ or $\R$, $X$ is a normed space over $\K$, $\tilde X=X$ and $\tilde \K=\K$, or

\item \label{db} $X$ is a real inner product space, $X_\C$ is its complexification, $\tilde X=(X_\C,X)$ and $\tilde \K=(\C,\R)$.
\end{enumerate}

If $p\in\widehat{\mathscr P}_{m}(k,\ell,\mu,\tilde X)$ and $1\le j\le \ell$, then $\partial p/\partial \zeta_j\in\widehat{\mathscr P}_{m}(k,\ell,\mu,\tilde X)$.
\end{lemma}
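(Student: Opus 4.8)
The plan is to reduce Case \ref{db} to Case \ref{da} in the same way that Lemma \ref{prodreal} was handled, using the symmetric representation \eqref{symsum}, and to verify Case \ref{da} by a direct computation on the defining monomials. First I would dispose of Case \ref{da}: if $p\in\widehat{\mathscr P}_{m}(k,\ell,\mu,X)$ is written as the finite sum \eqref{pzedef}, then termwise differentiation gives
\beqs
\frac{\partial p}{\partial \zeta_j}(z,\zeta)=\sum_{(\beta,\gamma)\in S}\gamma_j\, z^{\beta}\zeta^{\gamma-\e_j}\,\xi_{\beta,\gamma},
\eeqs
where $\e_j$ is the $j$-th standard basis vector of $\K^\ell$. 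The power vectors $\beta$ in $z$ are unchanged, so each still lies in $\mathcal E_\K(m,k,\mu)$; the new power vectors $\gamma-\e_j$ of $\zeta$ lie in $\K^\ell$, which is all that is required; and the coefficients $\gamma_j\xi_{\beta,\gamma}$ belong to $X$. Hence $\partial p/\partial\zeta_j\in\widehat{\mathscr P}_{m}(k,\ell,\mu,X)$. This is essentially the content already recorded as remark \ref{Cf} after Definition \ref{Hclass}, now stated as a lemma in a form uniform across the two cases.

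For Case \ref{db}, I would invoke \eqref{symsum} to reduce to a single symmetric pair: it suffices to take
\beqs
p(z,\zeta)=z^{\beta}\zeta^{\gamma}\xi+z^{\overline\beta}\zeta^{\overline\gamma}\,\overline{\xi},\qquad \xi\in X_\C,\ (\beta,\gamma)\in\mathcal E_\C(m,k,\mu)\times\C^\ell .
\eeqs
Differentiating,
\beqs
\frac{\partial p}{\partial \zeta_j}(z,\zeta)=\gamma_j\, z^{\beta}\zeta^{\gamma-\e_j}\xi+\overline{\gamma_j}\, z^{\overline\beta}\zeta^{\overline\gamma-\e_j}\,\overline{\xi}.
\eeqs
Since $\overline{\gamma-\e_j}=\overline\gamma-\e_j$ (the entries of $\e_j$ being real) and $\overline{\gamma_j\xi}=\overline{\gamma_j}\,\overline{\xi}$, the two terms are conjugates of one another, so the index set $\{(\beta,\gamma-\e_j),(\overline\beta,\overline\gamma-\e_j)\}$ preserves the conjugation and the coefficients satisfy \eqref{xiconj}. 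The $z$-powers $\beta,\overline\beta$ are untouched, hence remain in $\mathcal E_\C(m,k,\mu)$. Therefore $\partial p/\partial\zeta_j\in\widehat{\mathscr P}_{m}(k,\ell,\mu,X_\C,X)$, and summing over the finitely many symmetric pairs in the general $p$ gives the claim.

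There is no real obstacle here; the only point that needs a moment's care is that differentiation in $\zeta_j$ does not disturb the structural constraint, which is a condition on the $z$-exponents only (membership of $\beta$ in $\mathcal E_\K(m,k,\mu)$), while the $\zeta$-exponents are completely unconstrained in $\K^\ell$ — so lowering $\gamma_j$ by one is harmless. The mild bookkeeping is checking that the conjugation-symmetry condition \eqref{xiconj} is preserved under $\partial/\partial\zeta_j$, which follows because $\e_j$ has real entries and the map $c\mapsto\overline c$ is conjugate-linear. One should also note the degenerate case $\gamma_j=0$, where the corresponding term simply drops out, which is consistent with the conclusion. I would remark in passing that, via the identification \eqref{same2}, this lemma is also an immediate instance of the special case $\ell=0$ applied to the enlarged variable $\tilde z=(z,\zeta)$, but the direct argument above is shorter to write down.
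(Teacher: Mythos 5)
Your proposal is correct and follows essentially the same route as the paper: the paper also disposes of Case \ref{da} by termwise differentiation of \eqref{pzedef}, and for Case \ref{db} reduces to a single conjugate pair (their \eqref{pqform}, which is the same normal form you obtain via \eqref{symsum}), then computes $\partial p/\partial\zeta_j=\gamma_j\zeta_j^{-1}z^{\beta}\zeta^{\gamma}\xi+\overline{\gamma_j}\zeta_j^{-1}z^{\overline\beta}\zeta^{\overline\gamma}\overline{\xi}$ and reads off the structure. Your write-up is merely more explicit about why the $z$-exponents are unaffected and why the conjugation symmetry \eqref{xiconj} is preserved (since $\e_j$ is real), but the argument is the same.
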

\begin{proof}
 Again, Case \ref{da} can be proved by simple calculations using the form \eqref{pzedef} for $p$ and $q$. Considering Case \ref{db}, without loss of the generality, we assume $p(z,\zeta)$ as in \eqref{pqform}. We calculate
$$ \frac{\partial p}{\partial \zeta_j}(z,\zeta)= \gamma_j \zeta_j^{-1}z^\beta\zeta^\gamma \xi + \overline{\gamma_j} \zeta_j^{-1}z^{\overline \beta}\zeta^{\overline \gamma}\,\overline \xi,  $$
and obtain the desired statement.
\end{proof}

We observe from Definitions \ref{defMR} and \ref{defZA} that 
\beq\label{ZAM}
 (A_\C+\mathcal M_{-1})(\mathcal Z_{A_\C}p)=p\quad \forall p\in\widehat{\mathscr P}_{-1}(k,0,H_\C),
 \eeq
\beq\label{ZAM2}
 \mathcal Z_{A_\C}((A_\C+\mathcal M_{-1})p)=p\quad \forall p\in \widehat{\mathscr P}_{-1}(k,0,G_{1,0,\C}).
 \eeq
In \eqref{ZAM} and \eqref{ZAM2} above, $\mathcal M_{-1}$ is understood to be defined for $\K=\C$ and $X=G_{1,0,\C}$.
More relations are obtained next.

\begin{lemma}\label{invar2}
Let $\K$, $X$ and $\tilde X$ be as in \ref{pa} or \ref{pb} of  Lemma \ref{prodreal}.
Given numbers $m,\ell\in \Z_+$ and $\mu\in\R$.
The following statements hold true.
\begin{enumerate}[label=\tnum]
\item\label{invi}  
	If $k\ge j\ge -1$, then $\mathcal M_j$ maps $\widehat{\mathscr P}(k,\ell,\tilde X)$ into itself.\\ 
	 If  $k\ge 0$, then
	  $\mathcal R$ maps $\widehat{\mathscr P}(k,\ell,\tilde X)$ into itself.
\item\label{invii} 	
	If $k\ge m\ge 0$ and $k\ge j\ge -1$, then $\mathcal M_j$ maps $\widehat{\mathscr P}_m(k,\ell,\mu,\tilde X)$ into itself.
\item \label{inviii} 
If $k\ge 0$, then $\mathcal R$ 
maps  $\widehat{\mathscr P}_{0}(k,\ell,\mu,\tilde X)$ into $\widehat{\mathscr P}_{0}(k,\ell,\mu-1,\tilde X)$. 
\end{enumerate}
\end{lemma}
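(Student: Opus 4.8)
The plan is to prove Lemma \ref{invar2} by direct inspection of the explicit formulas \eqref{MM}, \eqref{chiz2} and \eqref{ZAp}, using the two observations already recorded right after Definition \ref{defMR}: namely that $\mathcal M_j p$ has exactly the same power vectors $\beta$ of $z$ (and $\gamma$ of $\zeta$) as $p$, and that each summand of $\mathcal R p$ multiplies $z^\beta\zeta^\gamma$ by $z_0^{-1}\cdots z_j^{-1}\beta_j$. In Case \ref{pa} of Lemma \ref{prodreal} everything reduces to checking membership in the relevant class by tracking which power vectors can occur; in Case \ref{pb} one additionally must verify the conjugation symmetry \eqref{xiconj}, which follows because $\beta_j$ and $\overline{\beta_j}$ are conjugate scalars, so multiplying the $(\beta,\gamma)$-term by $\beta_j$ and the $(\overline\beta,\overline\gamma)$-term by $\overline{\beta_j}$ preserves \eqref{xiconj}; the reindexing $z_0^{-1}\cdots z_j^{-1}$ is real and likewise respects the symmetry. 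I will state this reduction once at the start and then treat only Case \ref{pa} in detail, remarking that Case \ref{pb} is identical up to the conjugation bookkeeping, just as in the proofs of Lemmas \ref{prodreal} and \ref{derivze}.

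For part \ref{invi}: since $\mathcal M_j p$ in \eqref{MM} is a finite sum of terms $\beta_j z^\beta \zeta^\gamma \xi_{\beta,\gamma}$ with $\beta_j\xi_{\beta,\gamma}\in X$ (or $X_\C$), the index set is still a finite subset of $\K^{k+2}\times\K^\ell$, so $\mathcal M_j p\in\widehat{\mathscr P}(k,\ell,\tilde X)$. For $\mathcal R$, formula \eqref{chiz2} writes $\mathcal R p$ as a finite sum over $(\beta,\gamma)\in S$ and $0\le j\le k$ of terms whose power vector of $z$ is $\beta-(0,1,1,\ldots,1,0,\ldots,0)$ (the entry $-1$ in positions $0$ through $j$), which is still a vector in $\K^{k+2}$; the coefficient $\beta_j\xi_{\beta,\gamma}$ lies in $X$; hence $\mathcal R p\in\widehat{\mathscr P}(k,\ell,\tilde X)$.

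For part \ref{invii}: if $p\in\widehat{\mathscr P}_m(k,\ell,\mu,\tilde X)$ then every $\beta\in S$ lies in $\mathcal E_\K(m,k,\mu)$, i.e. $\Re\beta_{-1}=\cdots=\Re\beta_{m-1}=0$ and $\Re\beta_m=\mu$. Passing to $\mathcal M_j p$ does not alter any $\beta$, so $\mathcal M_j p\in\widehat{\mathscr P}_m(k,\ell,\mu,\tilde X)$; this is exactly fact \ref{R0} stated after Definition \ref{defMR}, and I will simply invoke it. For part \ref{inviii}, with $m=0$: here $\Re\beta_0=\mu$ for every $\beta\in S$ and there is no constraint on $\beta_{-1}$ except $\Re\beta_{-1}=0$. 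Applying $\mathcal R$, the term coming from index $j$ in \eqref{chiz2} has $z$-power vector $\beta'$ with $\beta'_{-1}=\beta_{-1}$, hence $\Re\beta'_{-1}=0$, and $\beta'_0=\beta_0-1$, hence $\Re\beta'_0=\mu-1$; thus each resulting power vector lies in $\mathcal E_\K(0,k,\mu-1)$, so $\mathcal R p\in\widehat{\mathscr P}_0(k,\ell,\mu-1,\tilde X)$. This is fact \ref{R2} after Definition \ref{defMR}, so again I can just cite it, together with fact \ref{R1} that $\mathcal R$ preserves the $z_{-1}$-powers.

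The only mild subtlety — and hence the step I would spell out most carefully — is confirming in Case \ref{pb} that applying $\mathcal M_j$ or $\mathcal R$ keeps the index set closed under conjugation with the matching coefficient symmetry \eqref{xiconj}; but since $S$ preserves conjugation and the operations send the $(\beta,\gamma)$-data to $(\beta',\gamma)$ with $\beta'$ depending on $\beta$ by subtracting a real vector, while multiplying $\xi_{\beta,\gamma}$ by the scalar $\beta_j$, one has $\beta'(\overline\beta)=\overline{\beta'(\beta)}$ and $\overline{\beta_j}\,\overline{\xi_{\beta,\gamma}}=\overline{\beta_j\xi_{\beta,\gamma}}$, so \eqref{xiconj} is preserved. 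There is essentially no genuine obstacle here; the lemma is bookkeeping, and the work is entirely in carrying the index-set and conjugation conditions through the two explicit formulas.
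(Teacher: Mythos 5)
Your proof is correct, and it takes a genuinely different route from the paper's. The paper disposes of Case \ref{pa} by saying it can be verified easily, and for Case \ref{pb} it first cites \cite[Lemma 6.4]{H6} for $\ell=0$ and then handles $\ell\ge 1$ by reducing to $\ell=0$ via the identification \eqref{same2} (noting that $\mathcal R$ still acts only on the $z$-block of $\tilde z$). Your argument instead verifies everything directly from \eqref{MM} and \eqref{chiz2}, in particular spelling out the conjugation bookkeeping for Case \ref{pb}: the $z$-power shift $z_0^{-1}\cdots z_j^{-1}$ corresponds to subtracting a real vector from $\beta$, which commutes with conjugation, and the coefficient multiplication by $\beta_j$ satisfies $\overline{\beta_j}\,\overline{\xi_{\beta,\gamma}}=\overline{\beta_j\xi_{\beta,\gamma}}$, so the symmetry \eqref{xiconj} survives. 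This is self-contained where the paper defers to an external reference, which makes your version slightly longer but arguably more readable for someone who does not have \cite{H6} at hand. One small point worth tightening: you invoke facts \ref{R0} and \ref{R2} from the remarks after Definition \ref{defMR} to settle parts \ref{invii} and \ref{inviii} quickly, but those remarks are stated for the single-space setting of Case \ref{pa}; in Case \ref{pb} they only yield membership in $\widehat{\mathscr P}_m(k,\ell,\mu,X_\C)$ and one still needs the conjugation check, which you do carry out in your final paragraph. It would be cleaner to present the conjugation argument as part of the proofs of \ref{invii} and \ref{inviii} rather than as an afterthought, so the reader does not momentarily think the citation of \ref{R0}/\ref{R2} alone finishes Case \ref{pb}.
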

\begin{proof}
Since \ref{invi}--\ref{inviii} can be verified easily in Case \ref{pa}, we consider Case \ref{pb} now.
The case $\ell=0$ is \cite[Lemma 6.4]{H6}.
Consider the case $\ell\ge 1$. We use the identifications in \eqref{samespace} and \eqref{same2}, with the operator $\mathcal R$ still only applying to the $z$-part of the variables $\tilde z$ in \eqref{zbtil}. Then the statements are reduced to the case $\ell=0$. The proofs in  \cite[Lemma 6.4]{H6} for the variable $\tilde z$  in this case $\ell=0$ still work and we obtain the desired statements.
\end{proof}

\begin{lemma}\label{invar4}
Let $\alpha,\sigma\ge 0$, $m,\ell\in\Z_+$ and $\mu\in\R$. Consider either
\begin{enumerate}[label=\rnum]
\item \label{Xa} $\K=\C$, $X_\alpha=G_{\alpha,\sigma,\C}$ and $X_{\alpha+1}=G_{\alpha+1,\sigma,\C}$, or

\item \label{Xb}   $X_\alpha=(G_{\alpha,\sigma,\C},G_{\alpha,\sigma})$ and $X_{\alpha+1}=(G_{\alpha+1,\sigma,\C},G_{\alpha+1,\sigma})$.
\end{enumerate}
One has the following.
\begin{enumerate}[label=\tnum]
\item\label{ZAi}	 
If $k\ge -1$,  then $\mathcal Z_{A_\C}$ maps $\widehat{\mathscr P}_{-1}(k,\ell,0,X_\alpha)$  into $\widehat{\mathscr P}_{-1}(k,\ell,0,X_{\alpha+1})$. 

\item\label{ZAii}	 If $k\ge m$, then $\mathcal Z_{A_\C}$ maps  $\widehat{\mathscr P}_{m}(k,\ell,\mu,X_\alpha)$ into $\widehat{\mathscr P}_{m}(k,\ell,\mu,X_{\alpha+1})$. 
\end{enumerate}
\end{lemma}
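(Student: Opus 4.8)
The plan is to reduce both statements to the single-monomial level and then invoke the mapping property of $(A_\C+i\omega{\mathbb I})^{-1}$ from Lemma \ref{Aioinv} together with the conjugation identity \eqref{Acinv}. First I would record two preliminary reductions. Since $G_{\alpha,\sigma,\C}\subset H_\C$ for $\alpha,\sigma\ge 0$, in both cases \ref{Xa} and \ref{Xb} every $p\in\widehat{\mathscr P}_{-1}(k,\ell,0,X_\alpha)$ lies in $\widehat{\mathscr P}_{-1}(k,\ell,0,H_\C)$, so $\mathcal Z_{A_\C}p$ is well defined by Definition \ref{defZA}. Moreover, for part \ref{ZAii} the condition $\beta\in\mathcal E_\C(m,k,\mu)$ with $m\in\Z_+$ forces $\Re(\beta_{-1})=0$ (because $-1<m$), hence by \eqref{Pmmz} we have $\widehat{\mathscr P}_{m}(k,\ell,\mu,X_\alpha)\subset\widehat{\mathscr P}_{-1}(k,\ell,0,X_\alpha)$; since $\mathcal Z_{A_\C}$ does not change any power vector $(\beta,\gamma)$ or the index set $S$, part \ref{ZAii} follows at once from part \ref{ZAi}.

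For part \ref{ZAi}, write $p$ as in \eqref{pzedef} (case \ref{Xa}) or \eqref{Rpzdef} (case \ref{Xb}) with a finite index set $S$. By \eqref{Eminus} each $\beta\in\mathcal E_\C(-1,k,0)$ satisfies $\Re(\beta_{-1})=0$, that is $\beta_{-1}=i\omega_\beta$ with $\omega_\beta\in\R$, so Lemma \ref{Aioinv} shows that $(A_\C+\beta_{-1}{\mathbb I})^{-1}$ maps $G_{\alpha,\sigma,\C}$ into $G_{\alpha+1,\sigma,\C}$. Putting $\eta_{\beta,\gamma}=(A_\C+\beta_{-1}{\mathbb I})^{-1}\xi_{\beta,\gamma}$, we obtain $\eta_{\beta,\gamma}\in G_{\alpha+1,\sigma,\C}$ and $(\mathcal Z_{A_\C}p)(z,\zeta)=\sum_{(\beta,\gamma)\in S}z^\beta\zeta^\gamma\eta_{\beta,\gamma}$, which already has the form required for membership in $\widehat{\mathscr P}_{-1}(k,\ell,0,G_{\alpha+1,\sigma,\C})$; this completes case \ref{Xa}.

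In case \ref{Xb} it remains to verify the reality condition \eqref{xiconj} for the coefficients $\eta_{\beta,\gamma}$. The set $S$ is unchanged, so it still preserves the conjugation, and for $(\beta,\gamma)\in S$, using $\overline{\beta_{-1}}=-i\omega_\beta$, the identity \eqref{Acinv}, and the hypothesis $\xi_{\overline\beta,\overline\gamma}=\overline{\xi_{\beta,\gamma}}$, I would compute
\beqs
\eta_{\overline\beta,\overline\gamma}=(A_\C+\overline{\beta_{-1}}{\mathbb I})^{-1}\xi_{\overline\beta,\overline\gamma}=(A_\C+\overline{\beta_{-1}}{\mathbb I})^{-1}\overline{\xi_{\beta,\gamma}}=\overline{(A_\C+\beta_{-1}{\mathbb I})^{-1}\xi_{\beta,\gamma}}=\overline{\eta_{\beta,\gamma}},
\eeqs
which is precisely \eqref{xiconj} for the family $(\eta_{\beta,\gamma})$. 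Together with the membership established in the previous paragraph, this gives $\mathcal Z_{A_\C}p\in\widehat{\mathscr P}_{-1}(k,\ell,0,G_{\alpha+1,\sigma,\C},G_{\alpha+1,\sigma})$, proving part \ref{ZAi}, and then part \ref{ZAii} follows as in the first paragraph. I do not anticipate a genuine obstacle: the entire argument rests on $\beta_{-1}$ being purely imaginary — which is exactly what the domain $\widehat{\mathscr P}_{-1}(\cdot,\cdot,0,\cdot)$ of $\mathcal Z_{A_\C}$ encodes — so that Lemma \ref{Aioinv} and \eqref{Acinv} apply verbatim; the only mild point is the conjugation bookkeeping in case \ref{Xb}, handled above. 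The reasoning runs parallel to the treatment of $\mathcal M_j$ and $\mathcal R$ in Lemma \ref{invar2}.
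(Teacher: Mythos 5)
Your proof is correct, and it uses exactly the same mathematical ingredients as the paper, namely that $\Re(\beta_{-1})=0$ on the domain $\widehat{\mathscr P}_{-1}(\cdot,\cdot,0,\cdot)$, the bounded invertibility $(A_\C+i\omega{\mathbb I})^{-1}:G_{\alpha,\sigma,\C}\to G_{\alpha+1,\sigma,\C}$ from Lemma \ref{Aioinv}, and the conjugation identity \eqref{Acinv} for the real case. Where you diverge slightly is in organization: the paper handles case \ref{Xa} directly, then for case \ref{Xb} cites Lemmas 4.6 and 6.5 of \cite{H6} for $\ell=0$ and reduces $\ell\ge1$ to $\ell=0$ via the identification $\widehat{\mathscr P}(k,\ell,\cdot)=\mathscr P(k+\ell,\cdot)$; you instead work directly with general monomials $z^\beta\zeta^\gamma\xi_{\beta,\gamma}$ for all $\ell$ at once, making the argument self-contained, and you derive part \ref{ZAii} from part \ref{ZAi} by the observation that $\mathcal Z_{A_\C}$ preserves the index set $S$ and all power vectors $(\beta,\gamma)$, rather than treating \ref{ZAii} as a separate statement. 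Both of these are modest streamlinings rather than a genuinely different route; the core computation $\eta_{\overline\beta,\overline\gamma}=\overline{\eta_{\beta,\gamma}}$ via \eqref{Acinv} matches what the cited [H6] proofs do.
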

\begin{proof}
The statements in Case \ref{Xa} are direct consequences of Lemma \ref{Aioinv}.
We consider Case \ref{Xb}.
 For $\ell=0$, part \ref{ZAi} can be proved in the same way as in Lemmas 4.6(i) and 6.5(i) of \cite{H6},
 while part \ref{ZAii} is Lemma 6.5(ii) of \cite{H6}. (Their proofs mainly make use of Lemma \ref{Aioinv} and property \eqref{Acinv}.)
 The case $\ell\ge 1$ is obtained by using the identifications in \eqref{samespace} and \eqref{same2}  to reduce to the case $\ell=0$.
\end{proof}

In particular, when  $k\ge m=-1$, $\alpha=\sigma=\mu=0$, one has from Lemma \ref{invar4}\ref{ZAii} that $\mathcal Z_{A_\C}$ maps $\widehat{\mathscr P}_{-1}(k,\ell,0,H_\C,H)$  into $\widehat{\mathscr P}_{-1}(k,\ell,0,G_{1,0,\C},G_{1,0})$.

\begin{assumption}\label{Asys}
From here to the end of this section \ref{prepsec}, assume  
$$m\in \Z_+\text{ and }
(\mathcal K,(s_k)_{k\in\mathcal K},(Z_k)_{k\in\mathcal K})\in \mathscr U(m).$$
Let ${\mathcal Y}_k(t)$, $\widehat{\mathcal Y}_k(t)$ be defined as in Definition \ref{Zsys}, and $W_k$ be defined by \eqref{Wk}.
\end{assumption}

Let $(X,\|\cdot\|_X)$ be a normed space over $\K=\R$ or $\C$.
For any $p\in \mathscr P(k,X)=\widehat{ \mathscr P}(k,0,X)$, we have, thanks to the chain rule and \eqref{Lmderiv},
\beq\label{dpL}
\ddt p(\widehat{\LL}_k(t))
=\sum_{j=-1}^k  \ddt L_j(t)  \frac{\partial p}{\partial z_j}
=(\mathcal M_{-1}p+\mathcal Rp)\circ \widehat{\LL}_k(t).
\eeq
Similarly, for any $p\in \widehat{\mathscr P}(k,\ell,X)$ with $\ell\in\mathcal K$, we have
\begin{align*}
\ddt p(\widehat{\LL}_k(t),\widehat Y_\ell(t))
&=\sum_{j=-1}^k \ddt L_j(t) \frac{\partial p}{\partial z_j} (\widehat{\LL}_k(t),\widehat Y_\ell(t)) 
+\sum_{j=1}^\ell \ddt \mathcal Y_j(t)\frac{\partial p}{\partial \zeta_j} (\widehat{\LL}_k(t),\widehat Y_\ell(t)),
\end{align*}
which yields
\beq\label{dtpLY}
\ddt p(\widehat{\LL}_k(t),\widehat Y_\ell(t))
=(\mathcal M_{-1}p+\mathcal Rp)\circ (\widehat{\LL}_k(t),\widehat Y_\ell(t))
+\sum_{j=1}^\ell\ddt \mathcal Y_j(t) \frac{\partial p}{\partial \zeta_j} (\widehat{\LL}_k(t),\widehat Y_\ell(t)).
\eeq
Clearly, \eqref{dpL} is a particular case of \eqref{dtpLY} with $\ell=0$.
For $k\in\mathcal K$, we apply formula \eqref{dtpLY} to function $p:=Z_k$, 
numbers $k:=s_k$, $\ell:=k-1$
and space $X=\C$ noticing that, thanks to the second condition in \eqref{bposcond}, $\mathcal M_{-1}Z_k=0$.
We obtain the following recursive formula
\beq\label{dYk}
\ddt \mathcal Y_k(t)=(\mathcal RZ_k)\circ (\widehat{\LL}_{s_k}(t),\widehat{\mathcal Y}_{k-1}(t))
+\sum_{j=1}^{k-1}\ddt \mathcal Y_j(t) \frac{\partial Z_k}{\partial \zeta_j} (\widehat{\LL}_{s_k}(t),\widehat{\mathcal Y}_{k-1}(t)).
\eeq
We claim that
\beq\label{dYW}
\ddt \mathcal Y_k(t)
=W_k(\widehat{\LL}_{s_k}(t),\widehat{\mathcal Y}_{k-1}(t)).
\eeq
Indeed, it follows \eqref{dYk} that
\begin{align*}
\ddt \mathcal Y_1(t)&=(\mathcal R Z_1)(\widehat{\LL}_{s_1}(t))=W_1(\widehat{\LL}_{s_1}(t)),\\
\ddt \mathcal Y_2(t)&=\left.\left(\mathcal RZ_2+W_1 \frac{\partial Z_1}{\partial \zeta_1} \right)\right|_{(\widehat{\LL}_{s_2}(t),{\mathcal Y}_1(t))} 
=W_2(\widehat{\LL}_{s_2}(t),{\mathcal Y}_1(t)),
\end{align*}
and, recursively,
\beqs
\ddt \mathcal Y_k(t)=
\left.\left(\mathcal R Z_k+\sum_{j=1}^{k-1} W_j\frac{\partial Z_k}{\partial \zeta_j}\right)\right|_{(\widehat{\LL}_{s_k}(t),\widehat{\mathcal Y}_{k-1}(t))}
=W_k(\widehat{\LL}_{s_k}(t),\widehat{\mathcal Y}_{k-1}(t)).
\eeqs
Therefore, \eqref{dYW} is true.
Thanks to identity \eqref{dYW}, formula \eqref{dtpLY} is refined to
\beq\label{dpYL2}
\ddt p(\widehat{\LL}_k(t),\widehat{\mathcal Y}_\ell(t))
=\left.\left( \mathcal M_{-1}p+\mathcal R p+\sum_{j=1}^\ell W_j\frac{\partial p}{\partial \zeta_j} \right)\right|_{(\widehat{\LL}_{s_k}(t),\widehat{\mathcal Y}_\ell(t))}.
\eeq 

Below we determine the classes for the functions $W_k$ and the others on the right-hand side of \eqref{dpYL2}.

\begin{lemma}\label{Wlem}
For all $k\in\mathcal K$,
\beq \label{Wprop}
W_k\in\widehat{\mathscr P}_0(s_k,k-1,-1,\C,\R).
\eeq 
\end{lemma}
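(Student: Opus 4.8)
The plan is to prove \eqref{Wprop} by induction on $k\in\mathcal K$, tracking how the class parameters propagate through the recursion \eqref{Wk}. The base case $k=1$ is immediate: since $Z_1\in\widehat{\mathscr P}_m^+(s_1,0)=\mathscr P_m^+(s_1)$, the inclusions \eqref{PPrel} (with $\ell=0$) put $Z_1\in\widehat{\mathscr P}_0(s_1,0,0,\C,\R)$, and Lemma \ref{invar2}\ref{inviii}, applied with $X=\R$ (Case \ref{pb} of Lemma \ref{prodreal}), then gives $W_1=\mathcal R Z_1\in\widehat{\mathscr P}_0(s_1,0,-1,\C,\R)$, which is exactly \eqref{Wprop} for $k=1$.

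For the inductive step I would fix $k\ge 2$, assume \eqref{Wprop} for every index of $\mathcal K$ strictly below $k$, and treat the two summands of $W_k$ in \eqref{Wk} separately. The first summand is handled as in the base case: $Z_k\in\widehat{\mathscr P}_m^+(s_k,k-1)\subset\widehat{\mathscr P}_0(s_k,k-1,0,\C,\R)$ by \eqref{PPrel}, so Lemma \ref{invar2}\ref{inviii} gives $\mathcal R Z_k\in\widehat{\mathscr P}_0(s_k,k-1,-1,\C,\R)$. For the generic term $W_j\,\partial Z_k/\partial\zeta_j$ of the sum, with $1\le j\le k-1$, I would combine three facts: Lemma \ref{derivze}\ref{db} (with $X=\R$) gives $\partial Z_k/\partial\zeta_j\in\widehat{\mathscr P}_0(s_k,k-1,0,\C,\R)$; the induction hypothesis gives $W_j\in\widehat{\mathscr P}_0(s_j,j-1,-1,\C,\R)$, which, since $s_j\le s_k$ and $j-1\le k-1$, upgrades to $W_j\in\widehat{\mathscr P}_0(s_k,k-1,-1,\C,\R)$ by padding the power vectors with zeros as in \eqref{RimR} — an embedding which preserves the conjugation symmetry of the index set and the relation \eqref{xiconj}, hence keeps us in the ``real'' class; and Lemma \ref{prodreal}\ref{pb} (with $X=\R$), taking $W_j$ as the $\tilde X$-valued factor and $\partial Z_k/\partial\zeta_j$ as the $\tilde\K$-valued factor, gives $W_j\,\partial Z_k/\partial\zeta_j\in\widehat{\mathscr P}_0(s_k,k-1,-1,\C,\R)$. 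Since $\widehat{\mathscr P}_0(s_k,k-1,-1,\C,\R)$ is an additive group, summing these terms over $j$ and adding $\mathcal R Z_k$ produces $W_k\in\widehat{\mathscr P}_0(s_k,k-1,-1,\C,\R)$, which completes the induction.

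I do not anticipate any genuine obstacle: the statement is essentially bookkeeping of the class parameters, and every ingredient — the inclusions \eqref{PPrel}, the mapping properties of $\mathcal R$ in Lemma \ref{invar2}, the $\zeta$-derivative rule of Lemma \ref{derivze}, and the product rule of Lemma \ref{prodreal} — is already in hand. The two points that call for a little care are, first, remaining inside the conjugation-preserving classes $\widehat{\mathscr P}_0(\cdot,\cdot,\cdot,\C,\R)$ rather than merely $\widehat{\mathscr P}_0(\cdot,\cdot,\cdot,\C)$, which is why I use the second cases of Lemmas \ref{prodreal} and \ref{derivze} and note that the zero-padding embedding respects \eqref{xiconj}; and second, that it is precisely $\mathcal R$ which lowers the exponent parameter from $0$ to $-1$, the content of Lemma \ref{invar2}\ref{inviii} and of the facts recorded after Definition \ref{defMR}.
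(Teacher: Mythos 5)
Your proof is correct and follows essentially the same route as the paper's: induction on $k$, with $\mathcal R Z_k$ handled by the inclusion chain \eqref{PPrel} followed by Lemma \ref{invar2}\ref{inviii}, and each $W_j\,\partial Z_k/\partial\zeta_j$ handled by padding the induction hypothesis up to index $(s_k,k-1)$ and combining Lemma \ref{derivze} with Lemma \ref{prodreal}. The paper does not spell out the conjugation-preserving nature of the zero-padding embedding, but that remark of yours is accurate and harmless.
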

\begin{proof}
We prove \eqref{Wprop} by induction.
First of all, observe that
$$Z_1\in \widehat{\mathscr P}_m^+(s_1,0)\subset \widehat{\mathscr P}_0(s_1,0,0,\C,\R).$$
Then by Lemma \ref{invar2}\ref{inviii}, we have
$$W_1=\mathcal R Z_1\in  \widehat{\mathscr P}_0(s_1,0,-1,\C,\R),$$
 hence \eqref{Wprop} is true for $k=1$.
Let $k\in\mathcal K$, $k\ge 2$ and assume
\beq \label{ihypo}
W_j\in\widehat{\mathscr P}_0(s_j,j-1,-1,\C,\R)
 \text{ for all }1\le j\le k-1.
 \eeq 
 By \eqref{Zs} and \eqref{PPrel}, 
 $$Z_k\in \widehat{\mathscr P}_m^+(s_k,k-1)\subset \widehat{\mathscr P}_m(s_k,k-1,0,\C,\R)\subset \widehat{\mathscr P}_0(s_k,k-1,0,\C,\R).$$
 Let $1\le j\le k-1$.
Thanks to Lemma \ref{invar2}\ref{inviii} and Lemma \ref{derivze}, we have 
\beq \label{fact}
\mathcal R Z_k \in \widehat{\mathscr P}_0(s_k,k-1,-1,\C,\R)
 \text{ and }\frac{\partial Z_k}{\partial \zeta_j} \in \widehat{\mathscr P}_0(s_k,k-1,0,\C,\R).
\eeq 
 By the induction hypothesis \eqref{ihypo},  
 $$W_j\in\widehat{\mathscr P}_0(s_j,j-1,-1,\C,\R)\subset \widehat{\mathscr P}_0(s_k,k-1,-1,\C,\R).$$
Then thanks to Lemma \ref{prodreal},  the product 
$$W_j\frac{\partial Z_k}{\partial \zeta_j} \text{  belongs to }\widehat{\mathscr P}_0(s_k,k-1,-1,\C,\R).$$
 This fact and the first property in \eqref{fact} imply
 \beqs
W_k=\mathcal R Z_k+\sum_{j=1}^{k-1} W_j 
\frac{\partial Z_k}{\partial \zeta_j} \in \widehat{\mathscr P}_0(s_k,k-1,-1,\C,\R),
\eeqs
which proves that \eqref{Wprop} is true for the considered $k$.
By the Induction Principle, \eqref{Wprop} is true for all $k\in \mathcal K$. 
 \end{proof}

 \begin{lemma}\label{Wplem}
Let $\K$, $X$ and $\tilde X$ be as in  Lemma \ref{derivze}. Suppose $\ell\in\mathcal K$,  $k\ge s_\ell$ and $\mu\in\R$.  

If $p\in \widehat{\mathscr P}_0(k,\ell,\mu,\tilde X)$, then
\beq\label{pest4}
\mathcal R p+\sum_{j=1}^\ell W_j \frac{\partial p}{\partial \zeta_j} \text{ belongs to } \widehat{\mathscr P}_0(k,\ell,\mu-1,\tilde X).
\eeq

If $k\ge m\ge 1$   and $p\in \widehat{\mathscr P}_m(k,\ell,\mu,\tilde X)$, then
\beq\label{pest8}
\mathcal R p+\sum_{j=1}^\ell W_j \frac{\partial p}{\partial \zeta_j} \text{ belongs to } \widehat{\mathscr P}_0(k,\ell,-1,\tilde X).
\eeq
\end{lemma}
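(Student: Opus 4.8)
\textbf{Proof plan for Lemma \ref{Wplem}.}
The plan is to prove \eqref{pest4} and \eqref{pest8} together by induction on $\ell$, mirroring the structure already used in the proof of Lemma \ref{Wlem}. First I would reduce to Case \ref{db} of Lemma \ref{derivze} as usual, since Case \ref{da} follows by direct inspection of the form \eqref{pzedef}; alternatively, since both $W_j$ and $p$ may be expanded via \eqref{symsum}, it suffices to treat single monomials $z^\beta\zeta^\gamma\xi$ and check that the conjugation condition \eqref{xiconj} is preserved at each step, which it visibly is because all the operators $\mathcal R$, $\partial/\partial\zeta_j$, and multiplication by $W_j$ respect conjugation (by \eqref{chiz2}, Lemma \ref{derivze}, and Lemma \ref{prodreal} respectively). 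So the real content is the bookkeeping of the indices $m$ and $\mu$.

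For \eqref{pest4}: by Lemma \ref{invar2}\ref{inviii}, $\mathcal R p\in\widehat{\mathscr P}_0(k,\ell,\mu-1,\tilde X)$. For each $j$ with $1\le j\le\ell$, Lemma \ref{Wlem} gives $W_j\in\widehat{\mathscr P}_0(s_j,j-1,-1,\C,\R)$, and since $s_j\le s_\ell\le k$ and $j-1\le\ell$, the embedding \eqref{PPemd} places $W_j\in\widehat{\mathscr P}_0(k,\ell,-1,\C,\R)$. By the last remark \ref{Cf} after Definition \ref{Hclass} (or Lemma \ref{derivze}), $\partial p/\partial\zeta_j\in\widehat{\mathscr P}_0(k,\ell,\mu,\tilde X)$. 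Hence by Lemma \ref{prodreal}, the product $W_j\,\partial p/\partial\zeta_j\in\widehat{\mathscr P}_0(k,\ell,\mu-1,\tilde X)$. Summing over $j$ and adding $\mathcal R p$, and using that $\widehat{\mathscr P}_0(k,\ell,\mu-1,\tilde X)$ is closed under addition (Remark \ref{Cb} after Definition \ref{Hclass}), gives \eqref{pest4}.

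For \eqref{pest8}: here $p\in\widehat{\mathscr P}_m(k,\ell,\mu,\tilde X)$ with $m\ge1$, so by \eqref{Pmmz} we have $p\in\widehat{\mathscr P}_0(k,\ell,0,\tilde X)$. Applying \eqref{pest4} with $\mu$ replaced by $0$ then gives exactly that $\mathcal R p+\sum_{j=1}^\ell W_j\,\partial p/\partial\zeta_j\in\widehat{\mathscr P}_0(k,\ell,-1,\tilde X)$, which is \eqref{pest8}. (One subtlety: in \eqref{pest4} I used the class-$0$ hypothesis on $p$, and $\mathcal R$ was applied to the class-$\mu$ object; to go to class $0$ I use $\widehat{\mathscr P}_m(k,\ell,\mu,\tilde X)\subset\widehat{\mathscr P}_0(k,\ell,0,\tilde X)$, not $\widehat{\mathscr P}_0(k,\ell,\mu,\tilde X)$ — the point is that for $m\ge1$ the leading power $\beta_m$ with $\Re\beta_m=\mu$ is not the $z_0$-power, so it is absorbed into the class-$0$ structure and the application of $\mathcal R$ drops a $z_0^{-1}\cdots z_j^{-1}$ factor, lowering the relevant $z_0$-exponent to strictly negative, i.e. class $-1$.) I do not anticipate a genuine obstacle; the only thing to be careful about is tracking which index ($m_*$ versus $\mu$) each lemma governs and correctly invoking \eqref{Pmmz} before applying Lemma \ref{invar2}\ref{inviii}, since $\mathcal R$ only behaves well on the $\widehat{\mathscr P}_0$ scale, not on $\widehat{\mathscr P}_m$ with $m\ge1$.
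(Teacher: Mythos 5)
Your proof is correct and follows essentially the same route as the paper: use Lemma \ref{invar2}\ref{inviii} for $\mathcal R p$, Lemma \ref{Wlem} with the embedding \eqref{PPemd} for $W_j$, Lemma \ref{derivze} for $\partial p/\partial\zeta_j$, and Lemma \ref{prodreal} for the products, then reduce \eqref{pest8} to \eqref{pest4} via \eqref{Pmmz}. The opening announcement of an induction on $\ell$ is not borne out by what you actually write (and no induction is needed here, since Lemma \ref{Wlem} already supplies the class of $W_j$), but this is a harmless slip in exposition rather than a gap.
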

\begin{proof}
Assume $p\in \widehat{\mathscr P}_0(k,\ell,\mu,\tilde X)$.
One has, by Lemma \ref{invar2}\ref{inviii}, 
\beq \label{RpX}
\mathcal R p\in \widehat{\mathscr P}_0(k,\ell,\mu-1,\tilde X),
\eeq 
and, by Lemma \ref{derivze}, $\partial p/\partial \zeta_j \in \widehat{\mathscr P}_0(k,\ell,\mu,\tilde X)$ for $1\le j\le \ell$. Combining the last property with  \eqref{Wprop} applied to  $k:=j$   and Lemma \ref{prodreal}, and also noticing that $s_j\le k$, we find  $$W_j\partial p/\partial \zeta_j \in \widehat{\mathscr P}_0(k,\ell,\mu-1,\tilde X).$$
With this fact, we can sum up $W_j\partial p/\partial \zeta_j$ in $j$ and use \eqref{RpX},
to obtain \eqref{pest4}.

In the case  $p\in \widehat{\mathscr P}_m(k,\ell,\mu,\tilde X)$ with $m\ge 1$, we have, from \eqref{Pmmz} with $m'=0<m$, that  $p\in \widehat{\mathscr P}_0(k,\ell,0,\tilde X)$. Then we can apply \eqref{pest4} to $\mu=0$ to obtain \eqref{pest8}.
\end{proof}

\subsection{An asymptotic approximation result}\label{linear}
In this subsection, we establish an asymptotic approximation for solutions of the linearized NSE with a decaying force.
It is a counter part of \cite[Theorem 5.1]{H6} but for functions with subordinate variables. 
Recall that we are under Assumption \ref{Asys} throughout this subsection.

\begin{theorem}\label{Fode}
Given numbers $\alpha,\sigma\ge 0$, $\mu>0$,  integers $k\ge m$ and $\ell\ge 0$ with $k\ge s_\ell$ in the case $\ell\ge 1$. 
Let $p$ be in $\widehat{\mathscr P}_m(k,\ell,-\mu,G_{\alpha,\sigma,\C})$  and satisfy 
\beqs
p(z,\zeta)\in G_{\alpha,\sigma} \text{ for all }z\in (0,\infty)^{k+2},\zeta\in(0,\infty)^\ell.
\eeqs
Let $T_*>E_{k}(0)$ such that ${\mathcal Y}_j(t)\ge 1/2$ for all $1\le j\le \ell$ and $t\ge T_*$.
Suppose  $g$ is a function from $[T_*,\infty)$ to $G_{\alpha,\sigma}$ that satisfies 
  \beqs
  |g(t)|_{\alpha,\sigma}\le M  L_m(t)^{-\mu-\delta_0} \text{ a.e. in $(T_*,\infty)$,}
  \eeqs 
for some positive numbers $\delta_0$ and $M$. 

Suppose  $w\in C([T_*,\infty),H_{\rm w})\cap L^1_{\rm loc}([T_*,\infty),V) $, with $w'\in L^1_{\rm loc}([T_*,\infty),V')$,  is a weak solution of 
 \beq\label{weq}
 w'=-Aw+p\left(\widehat\LL_k(t),\widehat{\mathcal Y}_\ell(t)\right)+g(t) \text{ in $V'$ on $(T_*,\infty)$,} 
 \eeq
    i.e., it holds, for all $v\in V$, that
  \beqs
 \ddt \inprod{w,v}=-\doubleinprod{w,v}+\inprod{p\left(\widehat\LL_k(t),\widehat{\mathcal Y}_\ell(t)\right)+g(t),v} \text{ in the distribution sense on $(T_*,\infty)$.}
  \eeqs
  
  Assume, in addition, $w(T_*)\in G_{\alpha,\sigma}$.   
  Then the following statements hold true.
\begin{enumerate}[label=\rm (\roman*)]
 \item\label{api} $w(t)\in G_{\alpha+1-\varep,\sigma}$ for all $\varep\in(0,1)$ and $t>T_*$.
 
 \item\label{apii} 
 Let 
 \beqs
 \delta_*=\begin{cases}
 \text{any number in $(0,1)\cap (0,\delta_0]$},&\text{ for } m=0,\\
 \delta_0,& \text{ for } m\ge 1.
 \end{cases}
 \eeqs
For any  $\varep\in(0,1)$, there exists a  constant $C>0$  such that 
\beq\label{wremain}
\left |w(t)-(\mathcal Z_{A_\C} p)\left(\widehat\LL_k(t),\widehat{\mathcal Y}_\ell(t)\right)\right |_{\alpha+1-\varep,\sigma}\le C L_m(t)^{-\mu-\delta_*}  \text{ for all }t \ge T_*+1. 
\eeq
\end{enumerate}
\end{theorem}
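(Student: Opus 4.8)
The plan is to reduce the statement to the already–known result \cite[Theorem 5.1]{H6}, which is exactly the case $\ell=0$ (no subordinate variables), by using the identifications \eqref{samespace} and \eqref{same2} that collapse the pair $(z,\zeta)$ into a single enlarged variable $\tilde z=(z,\zeta)\in(0,\infty)^{k+\ell+2}$. The obstacle to a naive application of that identification is that the time–substitution here is \emph{not} simply $\widehat\LL_{k+\ell}(t)$: the last $\ell$ slots are filled by $\widehat{\mathcal Y}_\ell(t)$, whose components are the recursively defined functions $\mathcal Y_j(t)$ rather than iterated logarithms. So the first step is to record precisely how $\widehat{\mathcal Y}_\ell(t)$ behaves: by \eqref{limYk} each $\mathcal Y_j(t)\to 1$, and by \eqref{dYW} together with Lemma \ref{Wlem} one has $\ddt\mathcal Y_j(t)=W_j(\widehat\LL_{s_j}(t),\widehat{\mathcal Y}_{j-1}(t))$ with $W_j\in\widehat{\mathscr P}_0(s_j,j-1,-1,\C,\R)$, so the derivatives decay like $L_0(t)^{-1}=1/t$ up to lower–order logarithmic factors. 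This is the analogue, for the $\zeta$–variables, of the facts $L_j'(t)$ small, $L_j(t)\to\infty$ used in the $\ell=0$ proof; the key contrast is that $\mathcal Y_j(t)$ stays bounded (near $1$) rather than growing, which only makes the estimates easier.

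Granting that, the proof of part \ref{api} is the ``smoothing'' half: one shows the mild solution $w(t)=e^{-(t-T_*)A}w(T_*)+\int_{T_*}^t e^{-(t-\tau)A}\big(p(\widehat\LL_k(\tau),\widehat{\mathcal Y}_\ell(\tau))+g(\tau)\big)\d\tau$ is well defined and lands in $G_{\alpha+1-\varep,\sigma}$ for $t>T_*$. Here $p(\widehat\LL_k(t),\widehat{\mathcal Y}_\ell(t))$ is $G_{\alpha,\sigma}$–valued with norm $\bigo(L_m(t)^{-\mu})$ by \eqref{LLo} applied to the $\beta$–part and boundedness of $\widehat{\mathcal Y}_\ell(t)^\gamma$; the same bound holds for $g$. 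One then uses \eqref{als0}–\eqref{als} exactly as in \cite[Theorem 5.1]{H6}: $e^{-(t-T_*)A}$ maps $G_{\alpha,\sigma}$ into $G_{\alpha+1-\varep,\sigma}$ with an integrable singularity $(t-\tau)^{-(1-\varep)/2}$ near $\tau=t$ plus exponential gain from $\lambda_1>0$ on the $[T_*,t-1]$ part, and Lemma \ref{plnlem} controls the resulting time integral. A short bootstrap/uniqueness argument identifies this mild solution with the given weak solution $w$.

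For part \ref{apii} — the genuinely new part — the strategy is the standard ``subtract the expected leading term and estimate the remainder by integration by parts.'' Set $v(t)=w(t)-(\mathcal Z_{A_\C}p)(\widehat\LL_k(t),\widehat{\mathcal Y}_\ell(t))$; by Lemma \ref{invar4}, $\mathcal Z_{A_\C}p\in\widehat{\mathscr P}_m(k,\ell,-\mu,G_{\alpha+1,\sigma,\C},\ldots)$, and the refined chain rule \eqref{dpYL2} gives
\beqs
\ddt (\mathcal Z_{A_\C}p)(\widehat\LL_k(t),\widehat{\mathcal Y}_\ell(t))
=\Big(\mathcal M_{-1}(\mathcal Z_{A_\C}p)+\mathcal R(\mathcal Z_{A_\C}p)+\sum_{j=1}^\ell W_j\frac{\partial (\mathcal Z_{A_\C}p)}{\partial\zeta_j}\Big)\Big|_{(\widehat\LL_k(t),\widehat{\mathcal Y}_\ell(t))}.
\eeqs
Using \eqref{ZAM} in the form $A_\C(\mathcal Z_{A_\C}p)+\mathcal M_{-1}(\mathcal Z_{A_\C}p)=p$, the equation for $v$ becomes $v'=-Av+h(t)+g(t)$ where $h(t)=-\big(\mathcal R(\mathcal Z_{A_\C}p)+\sum_j W_j\,\partial_{\zeta_j}(\mathcal Z_{A_\C}p)\big)\circ(\widehat\LL_k(t),\widehat{\mathcal Y}_\ell(t))$. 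By Lemma \ref{Wplem}, $h$ is $G_{\alpha+1,\sigma}$–valued in the class $\widehat{\mathscr P}_0(k,\ell,-\mu-1,\ldots)$ when $m=0$ and in $\widehat{\mathscr P}_0(k,\ell,-1,\ldots)$ when $m\ge1$ — in either case it decays strictly faster (by a full power of $L_m(t)$, resp. reaches the $L_0(t)^{-1}$ floor) than $p$ itself. One then writes $v$ as a Duhamel integral against $e^{-(t-\tau)A}$, estimates in $|\cdot|_{\alpha+1-\varep,\sigma}$ using \eqref{als0}–\eqref{als} and Lemma \ref{plnlem}, and gets $|v(t)|_{\alpha+1-\varep,\sigma}=\bigo(L_m(t)^{-\mu-\delta_*})$ with $\delta_*$ as specified ($\delta_*\le\min(\delta_0,1)$ for $m=0$ because the gain from $h$ is one power of $L_0$, which competes against the $\delta_0$ coming from $g$ and against the $(1-\varep)$ loss in smoothing near $\tau=t$; for $m\ge1$ there is no unit–increment restriction and $\delta_*=\delta_0$). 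The main obstacle, and where care is needed, is bookkeeping the term $\sum_j W_j\,\partial_{\zeta_j}(\mathcal Z_{A_\C}p)$: one must verify that $W_j$ composed with $\widehat{\mathcal Y}_{j-1}(t)$ stays in the right decay class even though $W_j$ is only a \emph{real}-coefficient function while $\mathcal Z_{A_\C}p$ is complexified — this is handled by Lemma \ref{prodreal} case \ref{pb} and Lemma \ref{Wplem} together — and that the ``second condition'' $\beta_{-1}=0$ in \eqref{bposcond}, which makes $\mathcal M_{-1}Z_k=0$ and hence legitimizes the integration by parts without reintroducing an $e^t$ factor, is what prevents the procedure from stalling. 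Once the framework of \cite[Theorem 5.1]{H6} is in place, the estimates themselves are routine; the real content is setting up the decomposition $w=\mathcal Z_{A_\C}p+v$ and identifying $h$ via \eqref{dpYL2} and \eqref{ZAM}.
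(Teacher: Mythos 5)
Your proposal is correct in its main line and would, with details filled in, prove the theorem; it takes a genuinely different organization of the argument from the paper's. The paper works directly with $w$: it applies the finite-dimensional spectral projections $P_\Lambda$, writes the variation-of-constants formula for $P_\Lambda w(T_*+t)$, and \emph{inside} the forcing integral performs an integration by parts against $e^{-(t-\tau)\mathcal A_{\beta,\Lambda}}$ for each monomial $z^\beta\zeta^\gamma\xi_{\beta,\gamma}$ of $p$. The boundary term of this integration by parts is precisely $P_{\Lambda,\C}(\mathcal Z_{A_\C}p)(\Xi(T_*+t))$ (so $\mathcal Z_{A_\C}$ is not a priori subtracted but \emph{discovered} as a boundary term), and the remaining $J_{\beta,\gamma}(t)$ integrals are estimated by \eqref{dF1}/\eqref{dF2} (themselves consequences of \eqref{dpYL2} and Lemma \ref{Wplem}) and Lemma \ref{plnlem}, after which $N\to\infty$. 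You instead subtract $Q(t)=(\mathcal Z_{A_\C}p)(\widehat\LL_k(t),\widehat{\mathcal Y}_\ell(t))$ at the outset, use \eqref{dpYL2} and \eqref{ZAM} to show that $v=w-Q$ solves $v'=-Av+h+g$ with $h=-\bigl(\mathcal R(\mathcal Z_{A_\C}p)+\sum_jW_j\,\partial_{\zeta_j}(\mathcal Z_{A_\C}p)\bigr)\circ\Xi$, and then run Duhamel on $v$. Algebraically the two are equivalent (your derivative of $Q$ is exactly what the paper's IBP transfers onto the integrand), and both rest on the same three technical pillars — the refined chain rule \eqref{dpYL2}, the decay class identification of Lemma \ref{Wplem}, and the integral bound of Lemma \ref{plnlem}. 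What your route buys is conceptual clarity: $v$ has strictly faster-decaying forcing and the norm bookkeeping is more transparent (for instance, the gain $\alpha\to\alpha+1$ for the $h$-part is visible immediately since $\mathcal Z_{A_\C}p$ is already $G_{\alpha+1,\sigma}$-valued, rather than emerging from the $(A_\C+\beta_{-1}\mathbb I)^{-1}$ factor produced by IBP). What the paper's route buys is that the spectral projection and the variation-of-constants formula are invoked for the original $w$ only, which is the natural place to handle the distributional weak solution. Two small caveats if you flesh this out: you should make the $P_\Lambda$-projection step explicit before invoking Duhamel for $v$ (the equation for $v$ is only in $V'$-distributional sense, so the same ``project, solve finite-dimensional ODE, pass $N\to\infty$'' device is still needed, not a shortcut); and you should verify that $Q(t)$ is $G_{\alpha+1,\sigma}$-valued (real, not merely complex), which follows from the hypothesis $p(z,\zeta)\in G_{\alpha,\sigma}$, the conjugation identity \eqref{Acinv}, and Lemma \ref{invar4}, but deserves a sentence. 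Your remark about the second condition $\beta_{-1}=0$ in \eqref{bposcond} is a property of the $Z_k$'s (needed so that \eqref{dYk}--\eqref{dYW} hold) rather than of the $p$ in this theorem; it is part of the standing framework, not a step of this proof.
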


We prepare for the proof of Theorem \ref{Fode} with the following calculations and estimates.

Consider   $k,\ell,\mu,T_*$ as in Theorem \ref{Fode}, power vectors 
$\beta_*=(0,\beta_0,\ldots,\beta_k)\in\mathcal E_\C(m,k,-\mu)$ and $\gamma\in \R^\ell$. 
Let $p:(0,\infty)^{k+2}\times(0,\infty)^\ell\to \R$ be defined by $p(z,\zeta)=z^{\beta_*}\zeta^\gamma$.
Define a function $h:[0,\infty)\to (0,\infty)$ by  
$$h(t)=p(\widehat\LL_k(T_*+t),\widehat{\mathcal Y}_\ell(T_*+t))
= \widehat\LL_k(T_*+t)^{\beta_*} \widehat{\mathcal Y}_\ell(T_*+t)^{\gamma} \text{ for $t\ge 0$. }$$
By  \eqref{LLo}, one has
\beq \label{he}
|h(t)| =\bigo( L_m(T_*+t)^{-\mu+s'})\text{ for all } s'>0.
\eeq 
Note that $\mathcal M_{-1}p=0$. The derivative of $h(t)$ can be computed, thanks to \eqref{dpYL2}, by
 \beq\label{dh}
 h'(t)
 =\left.\left(\mathcal R p+\sum_{j=1}^\ell W_j \frac{\partial p}{\partial \zeta_j} \right)\right|_{(\widehat{\LL}_{s_k}(T_*+t),\widehat{\mathcal Y}_\ell(T_*+t))}.
 \eeq

Let $s$ be an arbitrary number in $(0,1)$.
 
Consider $m=0$. It follows \eqref{pest4} that 
\beq\label{Rh1}
\mathcal R p+\sum_{j=1}^\ell W_j \frac{\partial p}{\partial \zeta_j} \text{ belongs to } \widehat{\mathscr P}_0(k,\ell,-\mu-1,\C).
\eeq
 By  \eqref{dh} and \eqref{Rh1}, we have, same as \eqref{he},
$ |h'(t)|=\bigo( (T_*+t)^{-\mu-s})$.
Combining this estimate with the continuity of the functions $h'(t)$ and $ (T_*+t)^{-\mu-s}$ on $[0,\infty)$, we obtain
\beq\label{dF1}
|h'(t)|\le C (T_*+t)^{-\mu-s} \text{ for all }t\ge 0, \text{ with some constant $C>0$ depending on $s$.}
 \eeq

Consider $m\ge 1$. Thanks to \eqref{pest8}, one has
\beq\label{Rh2}
\mathcal R p+\sum_{j=1}^\ell W_j \frac{\partial p}{\partial \zeta_j} \text{ belongs to } \widehat{\mathscr P}_0(k,\ell,-1,X).
\eeq
Combining \eqref{dh} with \eqref{Rh2} yields
$ |h'(t)|=\bigo\left( (T_*+t)^{-s}\right)$.
We obtain, same as for \eqref{dF1}, that 
 \beq\label{dF2}
|h'(t)|\le C (T_*+t)^{-s} \text{ for all }t\ge 0,\text{ with some constant $C>0$ depending on $s$.}
 \eeq

We are now ready to prove Theorem \ref{Fode}.

 \begin{proof}[Proof of Theorem \ref{Fode}]
Because the case $\ell=0$ is \cite[Theorem 5.1]{H6}, we only consider $\ell\ge 1$ below.
Part \ref{api} is from Lemma 2.4(i) of \cite{CaH1}. 
For part \ref{apii}, we follow the proof of \cite[Theorem 5.1]{H6} closely.
By \eqref{Linc} and \eqref{Lone}, the function $t\in[0,\infty)\mapsto  L_m(T_*+t)$ is increasing with images belonging to $[1,\infty)$.
 Denote $\Xi(t)=(\widehat\LL_k(t),\widehat{\mathcal Y}_\ell(t))$.
 
Let $N\in\N$, denote $\Lambda=\Lambda_N$, let $A_{\Lambda}=A|_{P_{\Lambda} H}$ and $A_{\C,\Lambda}=A_\C|_{P_{\Lambda,\C} H_\C}=(A_{\Lambda})_\C$.
 Then $A_{\Lambda}$, respectively, $A_{\C,\Lambda}$, is an invertible linear mapping from $P_{\Lambda} H$, respectively,  $P_{\Lambda,\C} H_\C$, to itself.
By applying projection $P_{\Lambda}$ to equation \eqref{weq} and using the variation of constants formula, we have, for $t\ge 0$, 
\beq\label{vcf}
\begin{aligned} 
P_{\Lambda} w(T_*+t)&=e^{-tA_{\Lambda}}P_{\Lambda} w(T_*) +\int_0^t e^{-(t-\tau)A_{\Lambda}}P_{\Lambda} p(\Xi(T_*+\tau))\d\tau\\
&\quad +\int_0^t e^{-(t-\tau)A_{\Lambda}}P_{\Lambda} g(T_*+\tau)\d\tau.
 \end{aligned}
 \eeq 

 Assume  
\beqs
p(z,\zeta)=\sum_{(\beta,\gamma)\in S} p_{\beta,\gamma}(z),
\text{ where } p_{\beta,\gamma}(z)=z^{ \beta} \zeta^\gamma\xi_{\beta,\gamma}\text{ with } \xi_{\beta,\gamma}\in G_{\alpha,\sigma,\C},
\eeqs
and $S\subset \C^{k+2}\times \C^\ell$ is the same as in Definition \ref{Hclass}\ref{Hii}.
For $(\beta,\gamma)\in S$, by letting $\beta=(\beta_{-1},\beta_0,\ldots,\beta_k)\in S$, we denote $\widetilde\beta=(0,\beta_0,\ldots,\beta_k)$ and 
  \beqs 
  h_{\beta,\gamma}(t)=\widehat\LL(T_*+t)^{\widetilde\beta}\widehat{\mathcal Y}_\ell(T_*+t)^\gamma.
   \eeqs  
   Then we can write
   \beq\label{ph}
   p_{\beta,\gamma}(\Xi(T_*+t)) =e^{\beta_{-1} (T_*+t)}h_{\beta,\gamma}(t) \xi_{\beta,\gamma}.
\eeq
Define 
$$\mathcal{A}_{\beta,\Lambda}=(A_\C+\beta_{-1}{\mathbb I})|_{P_{\Lambda,\C}H_\C}=A_{\C,\Lambda}+\beta_{-1} {\rm Id}_{P_{\Lambda,\C}H_\C}.$$
Then $\mathcal{A}_{\beta,\Lambda}$ is an invertible linear mapping from $P_{\Lambda,\C} H_\C$ to itself.
Define
\beq\label{Jbeta}
J_{\beta,\gamma}(t)=\int_0^t h_{\beta,\gamma}'(\tau)  \mathcal{A}_{\beta,\Lambda}^{-1} e^{-(t-\tau)\mathcal{A}_{\beta,\Lambda}}P_{\Lambda,\C}  \xi_{\beta,\gamma} \d\tau.
\eeq
 The above $J_{\beta,\gamma}(t)$ will appear when we compute the second term on the right-hand side of \eqref{vcf} by using formula \eqref{ph} and integration by parts.

 Let $\varep\in(0,1)$. We obtain the same  estimate (5.16) in \cite{H6} which states that
\beq \label{ynorm}
  \begin{aligned}
 & \left |P_{\Lambda,\C}\Big(w(T_*+t)-(\mathcal Z_{A_\C}p)(\Xi(T_*+t))\Big)\right |_{\alpha+1-\varep,\sigma}
  \le |e^{-tA_\C}P_{\Lambda,\C} W_*|_{\alpha+1,\sigma}\\
    &\quad  +\sum_{(\beta,\gamma)\in S} | J_{\beta,\gamma}(t)|_{\alpha+1,\sigma}+\left|\int_0^t e^{-(t-\tau)A}P_{\Lambda} g(T_*+\tau)\d\tau\right|_{\alpha+1-\varep,\sigma},
    \end{aligned}
\eeq  
where  $W_*=w(T_*) - (\mathcal Z_{A_\C}p)(\widehat\LL_k(T_*))$. We have $W_*\in G_{\alpha,\sigma,\C}$ and $| W_*|_{\alpha,\sigma}\in[0,\infty)$.

 Let $\delta,\theta\in(0,1)$ and $t\ge 1$.  
 For the rest of this proof,  $C$ denotes a generic positive constant which is independent of $N$ and $t$, and may assume different values from place to place.
 
For the first term on the right-hand side of \eqref{ynorm}, estimate (5.17) in \cite{H6} gives
  \beq\label{term1}
     |e^{-tA_\C}P_{\Lambda,\C} W_*|_{\alpha+1,\sigma}
  \le \frac{e^{-\delta t}}{e(1-\delta)} | W_*|_{\alpha,\sigma}\le C L_m(T_*+t)^{-\mu-\delta_*}.
\eeq

For the second term on the right-hand side of \eqref{ynorm}, we rewrite $J_{\beta,\gamma}(t)$ in \eqref{Jbeta} as
 \begin{align*}
J_{\beta,\gamma}(t)
 &=  \int_0^t e^{-\beta_{-1}(t-\tau)} h_{\beta,\gamma}'(\tau) (A_{\C,\Lambda}+\beta_{-1}{\rm Id}_{P_{\Lambda,\C}H_\C})^{-1} e^{-(t-\tau)A_{\C,\Lambda}}P_{\Lambda,\C}   \xi_\beta\d\tau\\
  &=  \int_0^t e^{-\beta_{-1}(t-\tau)} h_{\beta,\gamma}'(\tau) (A_\C+\beta_{-1}{\mathbb I})^{-1} e^{-(t-\tau)A_\C} P_{\Lambda,\C} \xi_{\beta,\gamma}\d\tau.
\end{align*}
Recall that $\beta\in \mathcal E_\C(m,k,-\mu)$  and $m\ge 0$, hence $\Re(\beta_{-1})=0$.
This fact and  inequality \eqref{AZA} imply
 \begin{align*}
 |J_{\beta,\gamma}(t)|_{\alpha+1,\sigma}
& \le  \int_0^t |h_{\beta,\gamma}'(\tau)|\left|(A_\C+\beta_{-1}{\mathbb I})^{-1} e^{-(t-\tau)A_\C}P_{\Lambda,\C}   \xi_{\beta,\gamma}\right|_{\alpha+1,\sigma} \d\tau\\
&\le \int_0^t |h_{\beta,\gamma}'(\tau)|  \left|e^{-(t-\tau)A_\C}P_{\Lambda,\C}\xi_\beta\right|_{\alpha,\sigma}\d\tau
\le \int_0^t |h_{\beta,\gamma}'(\tau)| \cdot  e^{-(t-\tau)} |\xi_{\beta,\gamma}|_{\alpha,\sigma}\d\tau.
\end{align*}

\textit{Case $m=0$.} Applying estimate \eqref{dF1} to $h=h_{\beta,\gamma}$ and $s=\delta_*\in(0,1)$ and utilizing inequality \eqref{iine2} give
\beq\label{term30}
|J_{\beta,\gamma}(t)|_{\alpha+1,\sigma}\le C\int_0^t e^{-(t-\tau)} (T_*+\tau)^{-\mu-\delta_*}|\xi_{\beta,\gamma}|_{\alpha,\sigma}\d\tau \le C(T_*+t)^{-\mu-\delta_*}.
\eeq

\textit{Case $m\ge 1$.}  Applying   estimate \eqref{dF2}  to $h=h_{\beta,\gamma}$, $s=1/2$ and using inequality \eqref{iine2} give
\beqs
|J_{\beta,\gamma}(t)|_{\alpha+1,\sigma}
\le C\int_0^t e^{-(t-\tau)} (T_*+\tau)^{-1/2}|\xi_{\beta,\gamma}|_{\alpha,\sigma}\d\tau
 \le C(T_*+t)^{-1/2},
\eeqs
which yields
\beq\label{term31}
|J_{\beta,\gamma}(t)|_{\alpha+1,\sigma}
 \le C L_m(T_*+t)^{-\mu-\delta_*}.
\eeq

 Summing up in finitely many $\beta$ and $\gamma$  the inequalities \eqref{term30} and \eqref{term31}, one obtains, for both cases $m=0$ and $m\ge 1$,
\beq\label{term3all}
\sum_{(\beta,\gamma)\in S} |J_{\beta,\gamma}(t)|_{\alpha+1,\sigma}\le C L_m(T_*+t)^{-\mu-\delta_*}.
\eeq

Consider the last integral on the right-hand side of \eqref{ynorm}. We have the same estimate (5.24) in \cite{H6}, that is,
\beq\label{term4all}
\left|\int_0^t e^{-(t-\tau)A}P_{\Lambda} g(T_*+\tau)\d\tau\right|_{\alpha+1-\varep,\sigma}
\le C L_m(T_*+ t)^{-\mu-\delta_0}.
\eeq 

We fix  $\delta=\theta=1/2$ now. It follows the estimates \eqref{ynorm}, \eqref{term1},  \eqref{term3all}  and \eqref{term4all}, and the fact $\delta_*\le \delta_0$ that
\beq\label{preN}
\left |P_{\Lambda,\C}\big( w(T_*+t)-(\mathcal Z_{A_\C}p)(\Xi(T_*+t))\big)\right |_{\alpha+1-\varep,\sigma}\\
\le C L_m(T_*+t)^{-\mu-\delta_*} .
\eeq
Note that $w(T_*+t)\in G_{\alpha+1-\varep,\sigma}$  from part \ref{api}.
Also, the assumption  $p\in \widehat{\mathscr P}_m(k,\ell,-\mu,G_{\alpha,\sigma,\C})$ implies, thanks to Lemma \ref{invar4}\ref{Xa}\ref{ZAii}, 
 $\mathcal Z_{A_\C}p\in \widehat{\mathscr P}_m(k,\ell,-\mu,G_{\alpha+1,\sigma,\C})$, and, consequently, 
$(\mathcal Z_{A_\C}p)(\Xi(T_*+t))\in G_{\alpha+1,\sigma,\C}.$
Then we can pass $N\to\infty$ in \eqref{preN} to obtain, for $t\ge 1$,   
\beq\label{wrem2}
\left |w(T_*+t)-(\mathcal Z_{A_\C}p)(\Xi(T_*+t))\right |_{\alpha+1-\varep,\sigma}\le C L_m(T_*+t)^{-\mu-\delta_*}.
\eeq
Finally, by replacing $T_*+t$ with $t$ in \eqref{wrem2}, we obtain \eqref{wremain}.
The proof is complete.
\end{proof}

\section{Proofs of the main results}\label{proofsec}

\subsection{Proof of Theorem \ref{mainthm}}\label{proof1}

\begin{lemma}\label{Blem}
Let $\alpha\ge 1/2$, $\sigma\ge 0$, $k,m,\ell\in\Z_+$ with $k\ge m$, and $\mu,\mu'\in\R$. 
If $$p\in\widehat{\mathscr P}_{m}(k,\ell,\mu,G_{\alpha+1/2,\sigma,\C},G_{\alpha+1/2,\sigma})\text{ and }
q\in\widehat{\mathscr P}_{m}(k,\ell,\mu',G_{\alpha+1/2,\sigma,\C},G_{\alpha+1/2,\sigma}),$$
then 
 \beqs
 B_\C(p,q)\in \widehat{\mathscr P}_{m}(k,\ell,\mu+\mu',G_{\alpha,\sigma,\C},G_{\alpha,\sigma}).
 \eeqs
 \end{lemma}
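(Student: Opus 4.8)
The plan is to mimic the proof of Lemma~\ref{prodreal}, case~\ref{pb}. First I would write $p$ in the symmetric form \eqref{symsum},
\beqs
p(z,\zeta)=\frac12\sum_{(\beta,\gamma)\in S}\left(z^\beta\zeta^\gamma\xi_{\beta,\gamma}+z^{\overline\beta}\zeta^{\overline\gamma}\,\overline{\xi_{\beta,\gamma}}\right),\qquad S\subset\mathcal E_\C(m,k,\mu),\ \xi_{\beta,\gamma}\in G_{\alpha+1/2,\sigma,\C},
\eeqs
and $q$ likewise with an index set $S'\subset\mathcal E_\C(m,k,\mu')$ and coefficients $\eta_{\beta',\gamma'}\in G_{\alpha+1/2,\sigma,\C}$. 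Since $B_\C$ is $\C$-bilinear and the class $\widehat{\mathscr P}_m(k,\ell,\mu+\mu',G_{\alpha,\sigma,\C},G_{\alpha,\sigma})$ is closed under addition, it suffices to treat a single pair of symmetric summands
\beq\label{Bpqpair}
p(z,\zeta)=z^\beta\zeta^\gamma\xi+z^{\overline\beta}\zeta^{\overline\gamma}\,\overline\xi,\qquad
q(z,\zeta)=z^{\beta'}\zeta^{\gamma'}\eta+z^{\overline{\beta'}}\zeta^{\overline{\gamma'}}\,\overline\eta,
\eeq
with $\beta\in\mathcal E_\C(m,k,\mu)$, $\beta'\in\mathcal E_\C(m,k,\mu')$, $\gamma,\gamma'\in\C^\ell$ and $\xi,\eta\in G_{\alpha+1/2,\sigma,\C}$, and then to sum back the resulting finitely many contributions.

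For \eqref{Bpqpair}, bilinearity of $B_\C$ yields
\beq\label{Bpqexp}
\begin{aligned}
B_\C(p,q)&=z^{\beta+\beta'}\zeta^{\gamma+\gamma'}B_\C(\xi,\eta)+z^{\overline\beta+\overline{\beta'}}\zeta^{\overline\gamma+\overline{\gamma'}}B_\C(\overline\xi,\overline\eta)\\
&\quad+z^{\beta+\overline{\beta'}}\zeta^{\gamma+\overline{\gamma'}}B_\C(\xi,\overline\eta)+z^{\overline\beta+\beta'}\zeta^{\overline\gamma+\gamma'}B_\C(\overline\xi,\eta),
\end{aligned}
\eeq
and I would check three points. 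First, because $\Re$ is additive and $\Re(\overline{\beta_j})=\Re(\beta_j)$, each of the four power vectors $\beta+\beta'$, $\overline\beta+\overline{\beta'}$, $\beta+\overline{\beta'}$, $\overline\beta+\beta'$ belongs to $\mathcal E_\C(m,k,\mu+\mu')$. Second, by inequality \eqref{BCas}, which is valid since $\alpha\ge 1/2$ and $\sigma\ge 0$, the operator $B_\C$ sends $G_{\alpha+1/2,\sigma,\C}\times G_{\alpha+1/2,\sigma,\C}$ into $G_{\alpha,\sigma,\C}$, so all four coefficients in \eqref{Bpqexp} lie in $G_{\alpha,\sigma,\C}$. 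Third, using $\overline{z^\beta}=z^{\overline\beta}$ for $z\in(0,\infty)^{k+2}$ together with the conjugation identity $\overline{B_\C(w_1,w_2)}=B_\C(\overline{w_1},\overline{w_2})$ from \eqref{BCbar}, the second term of \eqref{Bpqexp} is the conjugate of the first and the fourth is the conjugate of the third. Hence \eqref{Bpqexp} is a finite sum of expressions of the shape $z^{\tilde\beta}\zeta^{\tilde\gamma}\tilde\xi+z^{\overline{\tilde\beta}}\zeta^{\overline{\tilde\gamma}}\,\overline{\tilde\xi}$ with $\tilde\beta\in\mathcal E_\C(m,k,\mu+\mu')$ and $\tilde\xi\in G_{\alpha,\sigma,\C}$, which by \eqref{symsum} exhibits $B_\C(p,q)$ as a member of $\widehat{\mathscr P}_m(k,\ell,\mu+\mu',G_{\alpha,\sigma,\C},G_{\alpha,\sigma})$; summing over the symmetric pairs constituting $p$ and $q$ (with the obvious real scalar factors, combining any repeated power vectors) finishes the argument.

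I do not anticipate a genuine obstacle here: the computation is essentially the same bookkeeping as in Lemma~\ref{prodreal}, the only delicate point being the matching of conjugate monomials in \eqref{Bpqexp}, which is dispatched cleanly by \eqref{BCbar}, while the target Gevrey regularity $G_{\alpha,\sigma,\C}$ comes for free from \eqref{BCas} and the index constraint from the inclusion $\mathcal E_\C(m,k,\mu)+\mathcal E_\C(m,k,\mu')\subset\mathcal E_\C(m,k,\mu+\mu')$. As an alternative, one could instead invoke the identifications \eqref{same2} to reduce to the case $\ell=0$, for which the analogous statement for $B_\C$ is already in \cite{H6}; but the direct verification above is short enough to carry out in place.
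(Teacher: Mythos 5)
Your proof is correct and follows the same route the paper indicates: the paper states that the proof "is similar to that of Lemma \ref{prodreal} taking into account the second property in \eqref{BCbar} and inequality \eqref{BCas}," which is precisely the reduction to symmetric monomial pairs, bilinear expansion, pairing of conjugate terms via \eqref{BCbar}, Gevrey regularity of the coefficients via \eqref{BCas}, and additivity of $\Re$ on the power vectors that you carry out. The details you supply are a faithful filling-in of the argument the paper leaves implicit.
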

\begin{proof}
 The proof is similar to that of Lemma \ref{prodreal} taking into account the second property in \eqref{BCbar} and inequality \eqref{BCas}. We omit the details.
\end{proof}

\begin{proof}[Proof of Proposition \ref{qregpower}] 
First we prove \eqref{qreg} by induction.
We present the proof for the case $m_*=0$, while the other case $m_*\ge 1$  can be proved  by simply taking $\chi_n=0$ in the proof below.

For $n=1$, we have  $p_1\in \widehat{\mathscr P}_{m_*}(M_1,\widetilde M_1,-\mu_1,G_{\alpha,\sigma,\C},G_{\alpha,\sigma})$, hence, 
$$q_1=\mathcal Z_{A_\C}p_1 \text{ belongs to }\widehat{\mathscr P}_{m_*}(M_1,\widetilde M_1,-\mu_1,G_{\alpha+1,\sigma,\C},G_{\alpha+1,\sigma})$$ thanks to Lemma \ref{invar4}\ref{ZAii}.
Thus, \eqref{qreg} is true for $n=1$.

Let $n\ge 2$. Suppose it holds for all $1\le k\le n-1$ that
\beq
\label{qreg3} 
q_k\in \widehat{\mathscr P}_{m_*}(M_k,\widetilde M_k,-\mu_k,G_{\alpha+1,\sigma,\C},G_{\alpha+1,\sigma}).
\eeq

For $\mu_m+\mu_j=\mu_n$ with $m,j\le n-1$, we have, by \eqref{qreg3} and the virtue of Lemma \ref{Blem}, $B_\C( q_m,q_j)$ belongs to
\beqs
\widehat{\mathscr P}_{m_*}(M_n,\widetilde M_n,-\mu_m-\mu_j,G_{\alpha+1/2,\sigma,\C},G_{\alpha+1/2,\sigma})
=\widehat{\mathscr P}_{m_*}(M_n,\widetilde M_n,-\mu_n,G_{\alpha+1/2,\sigma,\C},G_{\alpha+1/2,\sigma}).
\eeqs 
This implies
\beq \label{Breg3} 
B_\C( q_m,q_j) \in\widehat{\mathscr P}_{m_*}(M_n,\widetilde M_n,-\mu_n,G_{\alpha,\sigma,\C},G_{\alpha,\sigma}).
\eeq 

Connsidering  $\chi_n$ defined in Definition \ref{construct}, if it is zero then, certainly,
\beq\label{chireg3}
\chi_n\in\widehat{\mathscr P}_{m_*}(M_n,\widetilde M_n,-\mu_n,G_{\alpha,\sigma,\C},G_{\alpha,\sigma}).
\eeq 
Now consider the case $m_*=0$ and $\chi_n$ is given by the first formula in \eqref{chin}. 
Let  $\lambda\le n-1$ with $\mu_\lambda+1=\mu_n$. 
By \eqref{qreg3} for $k=\lambda$ and applying \eqref{pest4} to $p=q_\lambda$, $\mu=-\mu_\lambda$  and $\tilde X=(G_{\alpha,\sigma,\C},G_{\alpha,\sigma})$, 
we obtain the function 
$\chi_n=\mathcal R q_\lambda+\sum_{j=1}^{\widetilde M_\lambda} W_j\frac{\partial q_\lambda}{\partial \zeta_j}$ belongs to $\widehat{\mathscr P}_{m_*}(M_n,\widetilde M_n,-\mu_\lambda-1,G_{\alpha,\sigma,\C},G_{\alpha,\sigma})=\widehat{\mathscr P}_{m_*}(M_n,\widetilde M_n,-\mu_n,G_{\alpha,\sigma,\C},G_{\alpha,\sigma})$.
This fact yields \eqref{chireg3} again.
 
Now that all the terms 
$p_n$, $B_\C(q_m,q_j)$, $\chi_n$ in \eqref{qn} belong to $\widehat{\mathscr P}_{m_*}(M_n,\widetilde M_n,-\mu_n,G_{\alpha,\sigma,\C},G_{\alpha,\sigma})$, we have
$$ p_n-\sum_{\mu_m+\mu_j=\mu_n}B_\C(q_m,q_j)-\chi_n\in \widehat{\mathscr P}_{m_*}(M_n,\widetilde M_n,-\mu_n,G_{\alpha,\sigma,\C},G_{\alpha,\sigma}).$$
By applying $\mathcal Z_{A_\C}$ to this element and using Lemma \ref{invar4}\ref{ZAii}, we obtain \eqref{qreg}.
By the Induction Principle, we have \eqref{qreg} holds true for all $n\ge 1$.

Now that \eqref{qreg} is true for all $n\in\N$, the statement \eqref{Breg} follows \eqref{Breg3}, and  \eqref{chireg} follows \eqref{chireg3}.
\end{proof}

We are now ready to prove Theorem \ref{mainthm}.

\begin{proof}[Proof of Theorem \ref{mainthm}]
Define functions $\Xi_n$, for $n\in\N$, by 
\beqs
\Xi_n(t)=(\widehat{\LL}_{M_n}(t),\widehat{\mathcal Y}_{\widetilde M_n}(t))\text{ for sufficiently large $t>0$.}
\eeqs
For $n\in\N$, let 
\begin{align*}
u_n(t)&=q_n(\Xi_n(t)),\
 U_n(t)=\sum_{j=1}^n u_j(t) \text{ and } v_n=u(t)-U_n(t),\\
f_n(t)&=p_n(\Xi_n(t)),\
F_n(t)=\sum_{j=1}^n f_j(t) \text{ and } g_n(t)=f(t)-F_n(t).
\end{align*}
Denote $\psi(t)=L_{m_*}(t)$. To establish the asymptotic expansion \eqref{uexpand}, it suffices to prove that for any $N\in \N$, there exists $\delta_N>0$ such that 
\beq\label{PTN}
\Big|u(t)-\sum_{n=1}^N u_n(t) \Big|_{\alpha+1-\rho,\sigma}=\bigo(\psi(t)^{-\mu_N-\delta_N}) \text{ for all }\rho\in(0,1).
\eeq

First, we derive some basic information for the solution $u(t)$ and the force $f(t)$. 
It follows expansion \eqref{fseq} that  
\beq \label{Fcond}
|g_n(t)|_{\alpha,\sigma}=\mathcal O(\psi(t)^{-\mu_n-\varep_n}) ,
\eeq
for any $n\in\N$, with some $\varep_n>0$.
For any $n\in\N$ and $\delta>0$, we have, by the virtues of Proposition \ref{qregpower} and \eqref{LLo}, 
\begin{align}
\label{fnrate}
|f_n(t)|_{\alpha+1,\sigma}&=\mathcal O(\psi(t)^{-\mu_n+\delta}),\\
\label{unrate}
|u_n(t)|_{\alpha+1,\sigma}&=\mathcal O(\psi(t)^{-\mu_n+\delta}),\\
\label{ubarate}
|U_n(t)|_{\alpha+1,\sigma}&=\mathcal O(\psi(t)^{-\mu_1+\delta}).
\end{align}

Fix a real number $T_*$ such that  $T_*\ge E_{m_*+1}(0)$ and $u(t)$ is a Leray--Hopf weak solution on $[T_*,\infty)$. Note that $\psi(t+T_*)\ge 1$ for $t\ge 0$.
For $\delta>0$ and sufficiently large $t$, we have, by  \eqref{Fcond} and \eqref{fnrate}, 
\beq\label{fest}
|f(t)|_{\alpha,\sigma}\le |f_1(t)|_{\alpha,\sigma}+|g_1(t)|_{\alpha,\sigma}
=\bigo(\psi(t)^{-\mu_1+\delta} )+\mathcal O(\psi(t)^{-\mu_1-\varep_1}) =\mathcal O(\psi(T_*+t)^{-\mu_1+\delta}),
\eeq
with the last estimate coming from \eqref{Lshift}. 

Given any number $\delta\in(0,\mu_1)$.  Define the function $F(t)=\psi(T_*+t)^{-\mu_1+\delta}$ for $t\ge 0$. 
By \eqref{fest}, $f$ satisfies the condition \eqref{falphaonly}.
As a result of Theorem \ref{Fthm2} applied to  $\varep=1/2$,
 there exists $\hat{T}>0$ and a constant $C>0$ 
such that $u(t)$ is a regular solution |on $[\hat{T},\infty)$, and, by estimate \eqref{us0}, 
 \beq\label{ups0}
 |u(\hat{T}+t)|_{\alpha+1/2,\sigma} \le C\psi(T_*+t)^{-\mu_1+\delta}  \text{ for all }t\ge 0.
 \eeq
Combining \eqref{AalphaB} with \eqref{ups0} gives 
 \beqs
|B(u(\hat{T}+t),u(\hat{T}+t))|_{\alpha,\sigma}\le  c_*^\alpha|u(\hat{T}+t)|_{\alpha+1/2,\sigma}^2 
\le C^2c_*^\alpha \psi(T_*+t)^{-2\mu_1+2\delta} \text{ for all }t\ge 0.
\eeqs
Together with \eqref{Lshift}, this estimate yields
\beq\label{buuest}
|B(u(t),u(t)))|_{\alpha,\sigma}=\bigo(\psi(t)^{-2\mu_1+2\delta}).
\eeq

We now prove \eqref{PTN} by induction in $N$.
We consider cases $m_*=0$ and $m_*\ge 1$ separately.

\medskip
\noindent\textbf{Case I:  $m_*=0$.}
In calculations below, all differential equations hold in $V'$-valued distribution sense on $(T,\infty)$, which is similar to \eqref{varform}, for any sufficiently $T>0$.
One can easily verify them by using \eqref{Bweak}, and the facts $u\in L^2_{\rm loc}([0,\infty),V)$ and $u'\in L^1_{\rm loc}([0,\infty),V')$ in Definition \ref{lhdef}.

\medskip
\textit{The first case $N=1$.} 
Rewrite the NSE \eqref{fctnse} as
\beq\label{uH1eq}
u'+Au=-B(u,u)+f_1+(f-f_1)=f_1+H_1,
\eeq
where
$H_1=-B(u,u)+g_1.$
By \eqref{Fcond} and taking $\delta=\mu_1/4$ in \eqref{buuest}, we obtain
\beq\label{H1est}
|H_1(t)|_{\alpha,\sigma}=\bigo(\psi(t)^{-\mu_1-\delta^*_1}),\text{ where } \delta^*_1=\min\{\varep_1,\mu_1/2\}>0.
\eeq

Note that $q_1=\mathcal Z_{A_\C}p_1$ and $u_1(t)= q_1(\widehat{\LL}_{M_1}(t))$. By equation \eqref{uH1eq} and estimate \eqref{H1est}, we can apply Theorem \ref{Fode}  to $w=u$, $p=p_1$, $k=M_1$, $\mu=\mu_1$ and $g=H_1$. Then there exists $\delta_1>0$ such that
\beqs
|u(t)-u_1(t)|_{\alpha+1-\rho,\sigma}=\bigo(\psi(t)^{-\mu_1-\delta_1}) \text{ for all }\rho\in(0,1).
\eeqs

Thus, \eqref{PTN} is true for $N=1$.

\medskip
\textit{The induction step.} Given any number $N \in \N$, assume there exists $\delta_N>0$ such that
\beq\label{vNrate}
|v_N(t)|_{\alpha+1-\rho,\sigma}=\mathcal O(\psi(t)^{-\mu_N-\delta_N})\text{ for all }\rho \in (0,1). 
\eeq
We aim to apply the key approximation result - Theorem \ref{Fode} - to  $v_N(t)$. For that, we find an appropriate differential equation for $v(t)$ starting with
\beqs
v_N'
=u'-\sum_{k=1}^N u_k'
=-Au -B(u,u)+f - \sum_{k=1}^N u_k'.
\eeqs 
Rewriting
\beqs
Au=\sum_{k=1}^N Au_k+Av_N\text{ and }
f= \sum_{k=1}^N  f_k+f_{N+1}+g_{N+1},
\eeqs
 we have
\beq\label{vN4}
v_N'
= -Av_N-B(u,u)+f_{N+1}- \sum_{k=1}^N \Big(Au_k+u_k' -f_k\Big)+g_{N+1}.
\eeq 

We recalculate the right-hand side of \eqref{vN4}.
First, we split nonlinear term $B(u,u)$  into 
\beq\label{Buu}
B(u,u)=B(U_N+v_N,U_N+v_N)
=B(U_N,U_N)+h_{N+1,1},
\eeq
 where
\beq\label{hN1}
h_{N+1,1}=B(U_N,v_N)+B(v_N,U_N)+B(v_N,v_N).
\eeq
More explicitly, 
\beq\label{BUN}
B(U_N,U_N) =\sum_{ m,j=1}^N B(u_m,u_j).
\eeq

Set $\mathcal S=\{\mu_n:n\in\N\}$.
According to Assumption \ref{B1}, $\mathcal S$ preserves the addition. Hence, the sum 
$\mu_m+\mu_j\in \mathcal S$, which yields $\mu_m+\mu_j=\mu_k$ for some $k\ge 1$.
With this, we split the sum in \eqref{BUN} into two parts:
$ \mu_m+\mu_j=\mu_k$  for 
$k\le N+1$ and for $k\ge N+2$.
It results in
\beq\label{BUN2}
B(U_N,U_N)
=\sum_{k=1}^{N+1}\Big( \sum_{\substack{1\le m,j\le N,\\ \mu_m+\mu_j=\mu_k}}B(u_m,u_j)\Big) +h_{N+1,2},
\eeq
where
\beq\label{hN2}
h_{N+1,2}=\sum_{\substack{1\le m,j\le N,\\\mu_m+\mu_j\ge \mu_{N+2}}} B(u_m,u_j).
\eeq

For $1\le k\le N+1$, define
\beq\label{Jk}
\mathcal J_k(t)=\sum_{\mu_m+ \mu_j=\mu_k} B(u_m(t),u_j(t)).
\eeq
It is clear in \eqref{Jk} that $m,j<k\le N+1$, hence, $m,j\le N$. 
Combining \eqref{Buu} and \eqref{BUN2} yields
\beq\label{Gy2}
B(u(t),u(t))
=\sum_{k=1}^{N+1}\mathcal J_k(t) +h_{N+1,1}(t)+h_{N+1,2}(t).
\eeq  

We now turn to the summation in $k$ in \eqref{vN4}.
For the derivative $u_k'$, we apply formula \eqref{dpYL2} to $p=q_k$ have
\beq \label{ykeq2}    
u_k'=\left(\mathcal M_{-1}q_k+\mathcal R q_k  + \sum_{j=1}^{\widetilde M_k} W_j \frac{\partial q_k}{\partial \zeta_j} \right)\circ \Xi_k.
\eeq  
Summing up the formula \eqref{ykeq2} in $k$ gives
\beq\label{shorty}
\sum_{k=1}^N u_k'=\sum_{k=1}^N \mathcal M_{-1}q_k\circ \Xi_k + \sum_{\lambda=1}^N \left(\mathcal R q_\lambda+ \sum_{j=1}^{\widetilde M_\lambda} W_j\frac{\partial q_\lambda}{\partial \zeta_j} \right)\circ \Xi_\lambda.
\eeq
In  \eqref{shorty} above, we changed the index $k$ from \eqref{ykeq2} to $\lambda$ for its last summation.
It follows \eqref{qreg} and \eqref{pest4}  that 
\beq\label{Rqprop} 
\mathcal Rq_\lambda+ \sum_{j=1}^{\widetilde M_\lambda} \frac{\partial q_\lambda}{\partial \zeta_j} W_j\in \widehat{\mathscr P}_{0}(M_\lambda,\widetilde M_\lambda,-\mu_\lambda-1,G_{\alpha,\sigma,\C},G_{\alpha,\sigma}).
\eeq 
According to Assumption \ref{B1} again with $m_*=0$,  we have $\mu_\lambda+1\in\mathcal S$.  Then there exists a unique number $k\in\N$ such that $\mu_k=\mu_\lambda+1$. 
By definition \eqref{chin}, 
$$\mathcal Rq_\lambda+ \sum_{j=1}^{\widetilde M_\lambda}  W_j\frac{\partial q_\lambda}{\partial \zeta_j}=\chi_k.$$
Considering two possibilities $k\le N+1$ and $k\ge N+2$, we  rewrite, similar to \eqref{Gy2},
\beq\label{sumR}
\sum_{\lambda=1}^N \left(\mathcal R q_\lambda+ \sum_{j=1}^{\widetilde M_\lambda} W_j\frac{\partial q_\lambda}{\partial \zeta_j} \right)\circ \Xi_\lambda=\sum_{k=1}^{N+1} \chi_k\circ \Xi_k +h_{N+1,3}(t),
\eeq
where
\beq\label{hN3}
h_{N+1,3}=\sum_{\substack{1\le \lambda\le N,\\ \mu_\lambda+1\ge \mu_{N+2}}}  \left(\mathcal R q_\lambda+ \sum_{j=1}^{\widetilde M_\lambda} W_j\frac{\partial q_\lambda}{\partial \zeta_j} \right)\circ \Xi_\lambda.
\eeq

Combining \eqref{vN4}, \eqref{Gy2}, \eqref{shorty} and \eqref{sumR} yields 
\beq\label{vN3}
v_N'+Av_N
=- \mathcal J_{N+1}(t) + f_{N+1}(t) - \sum_{k=1}^N X_k(t) - \chi_{N+1}\circ \Xi_{N+1}(t)+H_{N+1}(t),
\eeq 
where
\begin{align} 
\label{Xk}
X_k(t)&=\mathcal J_k(t)+ (Aq_k+\mathcal M_{-1}q_k + \chi_k-p_k)\circ \Xi_k(t),\\
\label{HN1}
H_{N+1}(t)&=-h_{N+1,1}(t)-h_{N+1,2}(t)-h_{N+1,3}(t)+g_{N+1}(t).
\end{align}

For $k\in\N$, define 
\beq\label{Qk}
\mathcal Q_k=\sum_{\mu_m+\mu_j=\mu_k}   B(q_m,q_j)\text{ which equals }\sum_{\mu_m+\mu_j=\mu_k}   B_\C(q_m,q_j). 
\eeq
Indeed, the last equality comes from the fact that 
\beq\label{qreal}
q_m(z,\zeta),q_j(z,\zeta)\in G_{\alpha,\sigma}.
\eeq 
Thanks to \eqref{Breg}, one has
\beq\label{Qreg}
\mathcal Q_k\in \widehat{\mathscr P}_{m_*}(M_{N+1},\widetilde M_{N+1},-\mu_k,G_{\alpha,\sigma,\C},G_{\alpha,\sigma}).
\eeq
It is clear from definition \eqref{Jk} and \eqref{Qk} that  
$$\mathcal Q_k(\Xi_k(t))=\mathcal J_k(t),\text{ that is, } \mathcal J_k=\mathcal Q_k\circ \Xi_k \text{ for all }k\in\N.$$
Considering $X_k$ given by \eqref{Xk}, we use \eqref{qreal} again to rewrite
\beqs
Aq_k+\mathcal M_{-1}q_k=(A_\C+\mathcal M_{-1})q_k.
\eeqs
The other terms can be rewritten, thanks to  \eqref{ZAM}, by
\beqs
\mathcal J_k=\mathcal Q_k\circ \Xi_k=((A_\C+\mathcal M_{-1})\mathcal Z_{A_\C}\mathcal Q_k)\circ \Xi_k,\quad 
\chi_k-p_k=(A_\C+\mathcal M_{-1})\mathcal Z_{A_\C}(\chi_k-p_k).
\eeqs
Therefore, we obtain
\beqs
X_k(t)=\Big [(A_\C+\mathcal M_{-1})\Big( q_k + \mathcal Z_{A_\C}(\mathcal Q_k+\chi_k-p_k)\Big)\Big] \circ \Xi_k(t).
\eeqs 

For $1\le k\le N$, one has from \eqref{qn} that
$q_k = \mathcal Z_{A_\C}(p_k-\mathcal Q_k-\chi_k )$,
which implies $X_k=0$.
Now that the sum $\sum_{k=1}^N X_k(t)$ is zero in  equation \eqref{vN3}, we use the facts $f_{N+1}=p_{N+1}\circ \Xi_{N+1}$ and $\mathcal J_{N+1}=\mathcal Q_{N+1}\circ \Xi_{N+1}$ to obtain 
\beq\label{vN5}
v_N'(t)+Av_N(t) 
=(- \mathcal Q_{N+1}+p_{N+1}- \chi_{N+1})\circ \Xi_{N+1} (t) 
 +H_{N+1}(t).
\eeq

For the first term on the right-hand side of \eqref{vN5}, we have, 
thanks to the property of $\mathcal Q_{N+1}$ from \eqref{Qreg} for $k=N+1$, 
the property of  $p_{N+1}$ from  \eqref{pncond} for $n=N+1$, 
and the property of $\chi_{N+1}$ from \eqref{chireg}, that
\beq\label{preg}
- \mathcal Q_{N+1}+p_{N+1}- \chi_{N+1}\in \widehat{\mathscr P}_{m_*}(M_{N+1},,\widetilde M_{N+1},-\mu_{N+1},G_{\alpha,\sigma,\C},G_{\alpha,\sigma}).
\eeq

We estimate $H_{N+1}(t)$ next
by finding bounds for the $G_{\alpha,\sigma}$-norm of each term on the right-hand side of \eqref{HN1}. 
For the term $h_{N+1,1}(t)$, which given by \eqref{hN1}, we use property \eqref{ubarate}, estimate \eqref{vNrate} with $\rho=1/2$,  and inequality \eqref{AalphaB} to obtain, for any $\delta>0$,
\begin{align*}
|B(U_N(t),v_N(t))|_{\alpha,\sigma},|B(v_N(t),U_N(t))|_{\alpha,\sigma}
&=\bigo(|U_N(t)|_{\alpha+1/2,\sigma}|v_N(t)|_{\alpha+1/2,\sigma})\\
&=\bigo(\psi(t)^{-\mu_1+\delta-\mu_N-\delta_N}),\\
|B(v_N(t),v_N(t))|_{\alpha,\sigma}
&=\bigo(|v_N(t)|_{\alpha+1/2,\sigma}^2)
=\bigo(\psi(t)^{-2\mu_N-2\delta_N}).
\end{align*}
These estimates yield
\beq\label{hh1}
|h_{N+1,1}(t)|_{\alpha,\sigma}=\bigo(\psi(t)^{-\mu_1-\mu_N+\delta-\delta_N})+\bigo(\psi(t)^{-2\mu_N-2\delta_N}).
\eeq
Because $2\mu_N,\mu_1+\mu_N \in\mathcal S$ and $2\mu_N,\mu_1+\mu_N>\mu_N$, we have $2\mu_N,\mu_1+\mu_N\ge \mu_{N+1}$. Then taking $\delta=\delta_N/2$ in \eqref{hh1} gives
\beq\label{h1e}
|h_{N+1,1}(t)|_{\alpha,\sigma}
= \mathcal O(\psi(t)^{-\mu_{N+1}-\delta_N/2}).
\eeq

For the function  $h_{N+1,2}(t)$, which is given in \eqref{hN2},  we utilize inequalities \eqref{AalphaB} and   \eqref{unrate}, noticing that   $\mu_m+\mu_j=\mu_k\ge \mu_{N+2}$, to derive
\beqs 
|B( u_m(t), u_j(t))|_{\alpha,\sigma}
=\bigo(|u_m(t)|_{\alpha+1/2,\sigma}|u_j(t)|_{\alpha+1/2,\sigma})
=\bigo(\psi(t)^{-\mu_m-\mu_j+\delta })
=\bigo(\psi(t)^{-\mu_{N+2}+\delta })
\eeqs
for any $\delta>0$.
Taking $\delta=\delta_{N+1}'\eqdef (\mu_{N+2}-\mu_{N+1})/2$, and summing up in $m$ and $j$ finitely many times,  we obtain
\beq\label{h2e}
|h_{N+1,2}(t)|_{\alpha,\sigma}=\bigo(\psi(t)^{-\mu_{N+1}-\delta_{N+1}'}).
\eeq

Regarding $h_{N+1,3}(t)$, we  have from property \eqref{Rqprop}  for each summand of $h_{N+1,3}$ in \eqref{hN3}  and the fact $\mu_\lambda+1=\mu_k\ge \mu_{N+2}$ that
\beq\label{h3e}
|h_{N+1,3}(t)|_{\alpha,\sigma}=\bigo(\psi(t)^{-\mu_{N+2}+\delta_{N+1}'})=\bigo(\psi(t)^{-\mu_{N+1}-\delta_{N+1}'}).
\eeq

For $g_{N+1,3}(t)$, estimate   \eqref{Fcond} for $n=N+1$ gives
\beq\label{ge}
|g_{N+1}(t)|_{\alpha,\sigma}=\bigo(\psi(t)^{-\mu_{N+1}-\varep_{N+1}}).
\eeq

Combining \eqref{HN1} with the estimates \eqref{h1e}, \eqref{h2e}, \eqref{h3e} and \eqref{ge}  above gives
\beq\label{HN1est}
|H_{N+1}(t)|_{\alpha,\sigma}=\bigo(\psi(t)^{-\mu_{N+1}-\delta^*_{N+1}}),
\eeq 
where $\delta^*_{N+1}=\min\{\varep_{N+1},\delta_N/2,\delta_{N+1}'\}>0$.

Based on equation \eqref{vN5}, property \eqref{preg} and estimate \eqref{HN1est}, we  apply  Theorem \ref{Fode} to 
$w=v_N$, $p= p_{N+1}-\mathcal Q_{N+1} -\chi_{N+1}$, $k=M_{N+1}$, $\mu=\mu_{N+1}$ and $g=H_{N+1}$.
Then there exists $\delta_{N+1}>0$ such that 
\beqs
\left|v_N(t)-(\mathcal Z_{A_\C} (  -\mathcal Q_{N+1} +p_{N+1}-\chi_{N+1}))\circ \Xi_{N+1}(t)\right|_{\alpha+1-\rho,\sigma}=\bigo(\psi(t)^{-\mu_{N+1}-\delta_{N+1}})
\eeqs
 for all $\rho\in(0,1)$. According to the definitions \eqref{qn} and \eqref{Qk}, we have
\beqs
(\mathcal Z_{A_\C} (-\mathcal Q_{N+1}  p_{N+1} -\chi_{N+1}))\circ \Xi_{N+1}=q_{N+1}\circ \Xi_{N+1}=u_{N+1},
\eeqs
hence
\beqs
|v_N(t)-u_{N+1}(t)|_{\alpha+1-\rho,\sigma}=\bigo(\psi(t)^{-\mu_{N+1}-\delta_{N+1}})\text{ for all }\rho\in(0,1).
\eeqs
This, together with the fact $v_N-u_{N+1}=u-\sum_{n=1}^{N+1}u_n$,  implies \eqref{PTN} for $N:=N+1$.

\medskip
\textit{Conclusion for Case I.} By the induction principle, we have \eqref{PTN} holds true for all $N\in\N$.
The proof for the case $m_*=0$ is complete.

\medskip
\noindent\textbf{Case II: $m_*\ge 1$.}
We repeat the  proof in Case I with the following appropriate modifications.
In \eqref{shorty}, we have, thanks to  \eqref{pest8} of Lemma \ref{Wlem}, 
$$\mathcal R q_\lambda+\sum_{j=1}^{\widetilde M_\lambda}  W_j\frac{\partial q_\lambda}{\partial \zeta_j} \in \widehat{\mathscr P}_{0}(M_\lambda,\widetilde M_\lambda,-1, G_{\alpha,\sigma,\C},G_{\alpha,\sigma})$$
Hence, we can estimate  the last summation in $\lambda$ in  \eqref{shorty} by
\beqs
\left|\sum_{\lambda=1}^N \left(\mathcal R q_\lambda+ \sum_{j=1}^{\widetilde M_\lambda} W_j \frac{\partial q_\lambda}{\partial \zeta_j} \right)\circ \Xi_\lambda)(t)\right|_{\alpha,\sigma}
=\bigo(t^{-s}), \text{ for any $s\in(0,1)$,}
\eeqs
which is of $\bigo(\psi(t)^{-\mu_{N+1}-\delta})$ for any $\delta>0$. Then we ignore \eqref{sumR} and can take $\chi_k=0$ in all computations after that.
The end result is \eqref{PTN} holds true for all $N\in\N$ with $q_n$ being defined by  \eqref{qn} with  $\chi_n=0$. 
\end{proof}

\subsection{Proof of Theorem \ref{thm3}}\label{proof2}

The proof of  Theorem \ref{thm3} will use the following conversions between the functions in 
Sections \ref{classtype} and in subsection \ref{Rsec}.

\begin{lemma}\label{convert}
 Let $X$ be a linear space over $\R$, and $X_\C$ be its complexification.
Given integers $k\ge m\ge 0$ and $\ell\ge 0$. 
\begin{enumerate}[label=\tnum]

\item\label{cv1} If $p\in \widehat{\mathcal P}_{m}(k,\ell,X) $, then
\beq\label{spec1}
 p\left (\widehat{\LL}_k(t),\zeta\right)
= q\left (\widehat{\LL}_k(t),\zeta\right)  \text{ for some } q\in \widehat{\mathscr P}_{m}(k,\ell,0,X_\C,X).
\eeq
More specifically, the function $q(z,\zeta)$ in \eqref{spec1}  is of the form
\beq\label{spec2}
q(z,\zeta)=\sum_{(\beta,\gamma)\in S} z^\beta\zeta^\gamma \xi_{\beta,\gamma} \text{ as in Definition \ref{realH} with }\beta=(\beta_{-1},\beta_0,\ldots,\beta_{k}),\Im(\beta_{k})=0. 
\eeq

\item\label{cv2} Suppose $q\in \widehat{\mathscr P}_{m}(k,\ell,0,X_\C,X) $. Then
\beq\label{spec3}
q\left (\widehat{\LL}_k(t),\zeta\right) 
=p\left (\widehat{\LL}_{k+1}(t),\zeta\right) 
\text{ for some } p\in \widehat{\mathcal P}_{m}(k+1,\ell,X).
\eeq
Moreover, if $q(z,\zeta)$ is of the form \eqref{spec2},
then
\beq \label{spec4}
p\in \widehat{\mathcal P}_{m}(k,\ell,X).
\eeq

\item \label{cv3}
If $p\in\widehat{\mathcal P}_m^{+}(k,\ell)$, then 
\beq\label{spec0}
p\left (\widehat{\LL}_k(t),\zeta\right)=q\left(\widehat{\LL}_k(t),\zeta\right) \text{ for some function } 
q\in \widehat{\mathscr P}_m^{+}(k,\ell). 
\eeq
Moreover, $q(z,\zeta)$ has the form \eqref{spec2}.
\end{enumerate}
\end{lemma}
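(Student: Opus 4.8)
The plan is to prove the three parts in order, using throughout only the elementary identities $L_{j+1}(t)=\ln L_j(t)$ (valid for every $j\ge -1$, since $\ln L_{-1}(t)=\ln e^t=t=L_0(t)$), the relation $t^{w}=e^{w\ln t}$, and the Euler formulas $\cos\theta=\tfrac12(e^{i\theta}+e^{-i\theta})$, $\sin\theta=\tfrac1{2i}(e^{i\theta}-e^{-i\theta})$; once these are in place the argument is purely bookkeeping of exponent vectors, and the decay/boundedness estimates play no role.

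For part \ref{cv1}, by linearity it suffices to treat a single generating term $Q$ of the shape \eqref{realpz}, with $\beta\in\mathcal E_\R(m,k,0)$, $\xi\in X$, and the trigonometric factors $\sigma_j(\omega_jz_{\kappa_j})$, $\tilde\sigma_j(\tilde\omega_j\ln\zeta_{\tilde\kappa_j})$. I would evaluate at $z=\widehat{\LL}_k(t)$ and rewrite each factor: if $\kappa_j\ge 1$ then $\omega_jz_{\kappa_j}=\omega_j\ln L_{\kappa_j-1}(t)$, so $\sigma_j(\omega_jz_{\kappa_j})$ is a combination of $z_{\kappa_j-1}^{\pm i\omega_j}$; if $\kappa_j=0$ then $\omega_jz_0=\omega_j t=\omega_j\ln L_{-1}(t)$, so it is a combination of $z_{-1}^{\pm i\omega_j}$; and $\tilde\sigma_j(\tilde\omega_j\ln\zeta_{\tilde\kappa_j})$ is a combination of $\zeta_{\tilde\kappa_j}^{\pm i\tilde\omega_j}$. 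Expanding the product, $Q$ becomes a finite sum of terms $c\,z^{\beta'}\zeta^{\gamma'}\xi$ where only purely imaginary amounts $\pm i\omega_j$ are added to $z$-components of index $\le k-1$ or $-1$, hence $\Re\beta'=\beta$, so $\beta'\in\mathcal E_\C(m,k,0)$ with $\Im\beta'_k=0$, and $\Re\gamma'=\gamma$. Since the Euler formulas turn a real factor into a conjugate-symmetric pair of complex exponentials, collecting terms produces an index set $S$ that preserves conjugation with $\xi_{\overline\beta',\overline\gamma'}=\overline{\xi_{\beta',\gamma'}}$ (here $\xi\in X$ real is used), so the result lies in $\widehat{\mathscr P}_m(k,\ell,0,X_\C,X)$ and has the form \eqref{spec2}.

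For part \ref{cv2}, using the symmetric representation \eqref{symsum} it suffices to show that for each $(\beta,\gamma)$ the function $\Re(z^\beta\zeta^\gamma\xi_{\beta,\gamma})$, evaluated at $z=\widehat{\LL}_k(t)$, is of the required form; here $\beta\in\mathcal E_\C(m,k,0)$ and $\xi_{\beta,\gamma}\in X_\C$. Splitting $\beta=\Re\beta+i\Im\beta$, $\gamma=\Re\gamma+i\Im\gamma$, $\xi_{\beta,\gamma}=a+ib$ with $a,b\in X$, I would write $z^\beta|_{\widehat\LL_k(t)}=\widehat{\LL}_k(t)^{\Re\beta}\prod_{j=-1}^k e^{i(\Im\beta)_j L_{j+1}(t)}$ and $\zeta^\gamma=\zeta^{\Re\gamma}\prod_{j=1}^\ell e^{i(\Im\gamma)_j\ln\zeta_j}$, so that the expression equals $\widehat{\LL}_k(t)^{\Re\beta}\zeta^{\Re\gamma}e^{i\Phi}(a+ib)$ with $\Phi$ a real finite sum of angles $(\Im\beta)_j L_{j+1}(t)$ and $(\Im\gamma)_j\ln\zeta_j$; its real part is $\widehat{\LL}_k(t)^{\Re\beta}\zeta^{\Re\gamma}(a\cos\Phi-b\sin\Phi)$. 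By the multiple–angle formula $\cos\Phi$ and $\sin\Phi$ are finite sums of products of $\cos$ and $\sin$ of single angles, and reintroducing the variable $z_{j+1}$ for the angle $(\Im\beta)_j L_{j+1}(t)$ (the index $j+1$ ranges over $[0,k+1]$ in general, forcing $\widehat\LL_{k+1}$, but only over $[0,k]$ when $\Im\beta_k=0$, which yields \eqref{spec4}) and $\zeta_j$ for $(\Im\gamma)_j\ln\zeta_j$, this is exactly the shape \eqref{realpz}, with real exponent $\Re\beta$ padded by a zero at index $k+1$ — still in $\mathcal E_\R(m,k+1,0)$ — and real coefficient $\pm a$ or $\mp b$. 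Summing over $S$ gives the desired $p$.

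For part \ref{cv3}, I would decompose $p=p_*(\zeta)+q(z,\zeta)$ as in Definition \ref{plusreal}, leave $p_*(\zeta)$ unchanged (it is a real finite sum of $\zeta^\gamma c_\gamma$ with $p_*(\mathbf 1_\ell)=1$, already of the form permitted in Definition \ref{plusclass}), and apply part \ref{cv1} to $q$ to obtain $\tilde q\in\widehat{\mathscr P}_m(k,\ell,0,\C,\R)$ of the form \eqref{spec2} with $q(\widehat\LL_k(t),\zeta)=\tilde q(\widehat\LL_k(t),\zeta)$. It remains to check $\tilde q$ satisfies \eqref{bposcond}: from the construction each exponent $\beta'$ of $\tilde q$ has $\Re\beta'$ equal to the original real exponent, hence $\Re\beta'<\mathbf 0_{k+2}$ in lexicographic order, and $\beta'_{-1}=0$ because the condition "$\omega_j=0$ whenever $\kappa_j=0$" in \eqref{bneg} prevents any imaginary amount from being added to the $z_{-1}$-component; therefore $q:=p_*+\tilde q\in\widehat{\mathscr P}_m^+(k,\ell)$ and has the form \eqref{spec2}, giving \eqref{spec0}. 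The main work throughout is the index/exponent bookkeeping — tracking which component receives an imaginary shift and why the variable range stays within $\widehat\LL_k$ or must grow to $\widehat\LL_{k+1}$ — and the only genuinely non-cosmetic step is part \ref{cv2}, where one must convert complex coefficients $\xi_{\beta,\gamma}$ into real ones via the real/imaginary split and the multiple–angle identities while simultaneously verifying that the level $k$ increases by at most one.
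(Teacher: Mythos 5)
Your proposal is correct and follows essentially the same route as the paper's proof: part \ref{cv1} rewrites each trigonometric factor $\sigma_j(\omega_j z_{\kappa_j})$ at $z=\widehat{\LL}_k(t)$ via $L_{\kappa_j}(t)=\ln L_{\kappa_j-1}(t)$ and the Euler identities, producing purely imaginary shifts on components of index $\le k-1$; part \ref{cv2} uses the symmetric representation \eqref{symsum}, splits $\beta,\gamma,\xi$ into real/imaginary parts, and expands the phase factors $e^{i\omega_j L_{j+1}(t)}$ into sinusoids of $L_{j+1}(t)$ with $j+1$ ranging over $[0,k+1]$ (or $[0,k]$ when $\Im\beta_k=0$); part \ref{cv3} decomposes $p=p_*+q$, applies part \ref{cv1} to $q$, and checks \eqref{bposcond} from the two conditions in \eqref{bneg}. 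These are exactly the steps in the paper, with the paper's explicit $p_1,p_2$ corresponding to your $\cos\Phi,\sin\Phi$ expansions.
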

\begin{proof}
 Part  \ref{cv1}. 
 It suffices to consider
 \beq\label{simp}
p(z,\zeta)=z^\beta \zeta^\gamma  \left(\prod_{j=1}^{j_*} \sigma_j(\omega_j z_{\kappa_j})\right) 
\left(\prod_{j=1}^{\tilde j_*} \tilde\sigma_j(\tilde\omega_j \ln\zeta_{\tilde \kappa_j}) \right)  \xi \in \widehat{\mathcal P}_{m}(k,\ell,X)
\eeq
as in Definition \ref{realPL}. Recall that $\beta\in \mathcal E_\R(m,k,0)\subset \R^{k+2}$ and $\gamma\in \R^\ell$.
 We have the function
    \begin{equation}\label{realgood}
       Q_0(z,\zeta)\eqdef z^\beta \zeta^\gamma\xi\text{ belongs to }\widehat{\mathscr P}_{m}(k,\ell,0,\C,\R).
    \end{equation}
 Observe, for  $\omega\in \R$ and $0\le j\le k$,  that
 \beq\label{sincos1}
  \cos(\omega\iln_j(t))=g(\widehat{\LL}_{k}(t),\zeta ) 
  \text{ and }
  \sin(\omega\iln_j(t))=h(\widehat{\LL}_{k}(t),\zeta ) ,
   \eeq
 where
 \beq\label{sincos2} 
 g(z,\zeta)=\frac12(z_{j-1}^{i\omega}+z_{j-1}^{-i\omega})\text{ and }
h(z,\zeta)=\frac1{2i}(z_{j-1}^{i\omega}-z_{j-1}^{-i\omega}).
\eeq
Similarly, for  $\tilde \omega\in \R$ and $0\le j\le \ell$, one has
\beq\label{scze1}
  \cos(\tilde\omega\ln\zeta_j)=\tilde g(z,\zeta)\text{ and }\sin(\tilde\omega\ln\zeta_j)=\tilde h(z,\zeta),
   \eeq
 where
 \beqs 
 \tilde g(z,\zeta)=\frac12(\zeta_j^{i\tilde\omega}+\zeta_j^{-i\tilde\omega})\text{ and }
\tilde h(z,\zeta)=\frac1{2i}(\zeta_j^{i\tilde\omega}-\zeta_j^{-i\tilde\omega}).
\eeqs
Obviously,
\beq\label{sincos3} 
g,h,\tilde g,\tilde h\in \widehat{\mathscr P}_m(k,\ell,0,\C,\R).
\eeq
To calculate $p(\widehat{\LL}_k(t),\zeta)$ we multiply $Q_0(\widehat{\LL}_k(t),\zeta)$ with the sinusoidal functions in  \eqref{sincos1} and \eqref{scze1}. It results in  
$p(\widehat{\LL}_k(t),\zeta)= q(\widehat{\LL}_k(t),\zeta)$,
where $q(z,\zeta)$ is a product of the function $Q_0(z,\zeta)$ with finitely many functions of types $g$, $h$, $\tilde g$, $\tilde h$ above. By \eqref{realgood},  \eqref{sincos3} and applying Lemma  \ref{prodreal} to the product, 
we have $q\in \widehat{\mathscr P}_{m}(k,\ell,0,X_\C,X)$. Thus, we obtain \eqref{spec1}.

For the second statement, we observe that, in the above process of obtaining $q(z,\zeta)$, the possible appearance  of $z_k^{i\Im (\beta_k)}$ can only come from \eqref{sincos2}. However, each  variable $z_{j-1}$ there has the index $j-1<k$.
 Therefore, the resulted formula of $q(z,\zeta)$ does not contain $z_k^{i\Im (\beta_k)}$ with $\Im(\beta_k)\ne 0$, which can be said equivalently that $\Im(\beta_k)=0$ in \eqref{spec2}.

\medskip
Part \ref{cv2}.  
It suffices to consider, for $z\in (0,\infty)^{k+2}$ and $\zeta\in(0,\infty)^\ell$, 
\beq \label{qqstar}
q(z,\zeta)=q_*(z,\zeta)+\overline {q_*(z,\zeta)},\text{ where } q_*(z,\zeta)=z^\beta\zeta^\gamma\xi
\eeq 
with $\xi\in X_\C$, $\gamma\in \C^\ell$, and
\beq\label{recallbx} 
\beta\in\C^{k+2}\text{ having } \Re\beta\in \mathcal E_\R(m,k,0).
\eeq
Denote $\omega=\Im\beta=(\omega_{-1},\omega_0,\ldots,\omega_k)\in \R^{k+2}$ and $\tilde \omega=\Im\gamma=(\tilde \omega_1,\ldots,\tilde \omega_\ell)\in\R^\ell$.
Then
\begin{equation*}
\notag
\widehat{\LL}_k(t)^\beta \zeta^\gamma
=\widehat{\LL}_k(t)^{\Re\beta} \zeta^{\Re\gamma}
\left( \prod_{j=-1}^k L_j(t)^{i\omega_j}\right)\left(\prod_{j=1}^\ell  \zeta_j^{i\tilde\omega_j}\right),
\end{equation*}
which we rewrite as
\beq \label{mess}
\begin{aligned}
\widehat{\LL}_k(t)^\beta \zeta^\gamma
&=\widehat{\LL}_k(t)^{\Re\beta} \zeta^{\Re\gamma}
\left\{  \prod_{j=-1}^k\left [\cos(\omega_j L_{j+1}(t))+i\sin(\omega_jL_{j+1}(t))\right] \right\}\\
&\quad \times\left\{ \prod_{j=1}^\ell\left [\cos(\tilde\omega_j\ln \zeta_j)+i\sin(\tilde\omega_j\ln \zeta_j)\right] \right\}.
\end{aligned}
\eeq 
Expanding all the products in \eqref{mess}, we obtain
\beqs
\widehat{\LL}_k(t)^\beta \zeta^\gamma
=\widehat{\LL}_k(t)^{\Re\beta} \zeta^{\Re\gamma}\left\{p_1(z,\zeta)\Big|_{z=\widehat{\LL}_{k+1}(t)}+ip_2(z,\zeta)\Big|_{z=\widehat{\LL}_{k+1}(t)}\right\},
\eeqs
for some functions $p_1,p_2 : (0,\infty)^{k+3}\times (0,\infty)^\ell\to \R$, each is a finite sum of the functions of  the form
\beq\label{ppbase}
(z,\zeta)\mapsto  \left(\prod_{j=0}^{k+1} \sigma_j(\omega_{j-1} z_j)\right) 
\left(\prod_{j=1}^\ell \tilde\sigma_j(\tilde\omega_j \ln\zeta_j) \right)\text{ with functions }\sigma_j,\tilde\sigma_j\in\{\cos,\sin\}.
\eeq
 We then have 
\begin{align*}
q_*(\widehat{\LL}_k(t),\zeta)
&=\widehat{\LL}_k(t)^\beta \zeta^\gamma (\Re\xi+i\Im\xi)\\
&=\widehat{\LL}_k(t)^{\Re\beta} \zeta^{\Re\gamma} 
\left\{\left [p_1(\widehat{\LL}_{k+1}(t),\zeta)\Re\xi - p_2(\widehat{\LL}_{k+1}(t),\zeta)\Im\xi\right]\right.\\
&\quad +i\left.\left [p_1(\widehat{\LL}_{k+1}(t),\zeta)\Im\xi + p_2(\widehat{\LL}_{k+1}(t),\zeta)\Re\xi\right]\right\}.
\end{align*} 
Therefore,
\beq\label{realpart}
q_*(\widehat{\LL}_k(t),\zeta)
=2 \Re\left[q_*(\widehat{\LL}_k(t),\zeta)\right]
=p(\widehat{\LL}_{k+1}(t),\zeta),
\eeq 
where, for $z=(z_{-1},z_0,\ldots,z_k,z_{k+1})\in (0,\infty)^{k+3}$,
\beq \label{qzz}
p(z,\zeta)= 2(z_{-1},z_0,\ldots,z_k)^{\Re\beta} \zeta^{\Re\gamma}\left[ p_1(z,\zeta)\Re\xi - p_2(z,\zeta)\Im\xi\right].
\eeq 
Combing formula \eqref{qzz} with \eqref{recallbx} and the fact $p_1,p_2$ being finite sums of functions from \eqref{ppbase}, we obtain
 $p\in \widehat{\mathcal P}_{m}(k+1,\ell,X)$.
Thus, we have \eqref{spec3} from \eqref{realpart}.

For the second statement, it suffices to consider $q(z,\zeta)$ to be as in \eqref{qqstar} again. We follow the above calculations from \eqref{qqstar} to \eqref{qzz}. By the last assumption in \eqref{spec2}, one has $\omega_k=\Im(\beta_k)=0$. 
This deduces that  all the terms $\cos(\omega_kL_{k+1}(t))=1$ and $\sin(\omega_kL_{k+1}(t))=0$ in \eqref{mess}.
Hence, in constructing the functions $p_1,p_2$, the product $\prod_{j=0}^{k+1}$ in \eqref{ppbase} can be replaced with $\prod_{j=0}^{k}$, that is,  \eqref{ppbase}  does not have the variable $z_{k+1}$ in this situation. 
Also, the part $(z_{-1},z_0,\ldots,z_k)^{\Re\beta} \zeta^{\Re\gamma}$ in formula \eqref{qzz} of $p(z,\zeta)$ does not contain the variable  $z_{k+1}$.
Therefore, $p\in \widehat{\mathcal P}_{m}(k,\ell,X) $.

\medskip  
Part \ref{cv3}. Assume $p\in\widehat{\mathcal P}_m^{+}(k,\ell)$ is $p(z,\zeta)=p_*(\zeta)+p_0(z,\zeta)$
as in \eqref{prealde} with $p_0\in \widehat{\mathcal P}_m(k,\ell,\R)$ playing the role of $q$ in \eqref{prealde}. By definition, $p_*(\zeta)$  is the same as in \eqref{pdecomp}.
By part \ref{cv1}, there is $q_0\in \widehat{\mathscr P}_{m}(k,\ell,0,\C,\R)$ such that
\beq\label{rspec}
 p_0 (\widehat{\LL}_k(t),\zeta)
= q_0 (\widehat{\LL}_k(t),\zeta)
\eeq
and $q_0$ satisfies \eqref{spec2}.
Define $q(z,\zeta)=p_*(\zeta)+q_0(z,\zeta)$.
We then have $q\in  \widehat{\mathscr P}_{m}(k,\ell,0,\C,\R)$.
Thanks to \eqref{rspec}, we have $p(\widehat{\LL}_k(t),\zeta)=q(\widehat{\LL}_k(t),\zeta)$, which proves the first part of \eqref{spec0}. 
Because of the particular form of $p_*(\zeta)$, which is independent of $z$, and the fact $q_0(z,\zeta)$ satisfies \eqref{spec2},  the functions $q(z,\zeta)$ also has property \eqref{spec2}.

It remains to prove that $q\in \widehat{\mathscr P}_m^{+}(k,\ell)$. We need to verify the condition \eqref{bposcond} for $q_0$.
We consider the construction of $q_0$ in  part \ref{cv1}, which is the construction of the so-denoted function $q$ in that part.
Without loss of the generality, we assume $p_0(z,\zeta)$ satisfies  \eqref{simp}.

Thanks to the second property in \eqref{bneg} for $p_0$,  we have  in formula \eqref{simp} for $p_0$ when $\kappa_j=0$ that 
$$\sigma_j(\omega_jz_0)=\cos(0 z_0)\text{ or }\sigma_j(\omega_jz_0)=\sin(0 z_0)
\text{  in \eqref{simp} for $p_0$. }$$
This implies that when computing $p_0 (\widehat{\LL}_k(t),\zeta)$, we have $\omega=0$ for $j=0$ in \eqref{sincos1} and \eqref{sincos2}.
As a consequence,   the imaginary power of $z_{-1}$ in $q_0(z,\zeta)$, which comes from the power $i\omega$ or $-i\omega$ when $j=0$ in \eqref{sincos1}, is in fact zero. This means $\Im(\beta_{-1})=0$ for the resulted formula of $q_0$ as a sum in \eqref{spec2}.
Since $q_0\in \widehat{\mathscr P}_{m}(k,\ell,0,\C,\R)$, we already have $\Re(\beta_{-1})=0$, thus, $\beta_{-1}=0$, and $q_0$ meets the second condition in \eqref{bposcond}. 

Observe that the functions $g$ and $h$ only have imaginary powers for the variable $z$, and the functions $\tilde g$, $\tilde h$ are independent of $z$. Hence,  the vector $\Re\beta$ for $q_0$ in \eqref{spec2} is, in fact, the vector $\beta$ for $p_0$ in \eqref{simp}. Therefore, $q_0$ satisfies the first condition in \eqref{bposcond} thanks to the first property in \eqref{bneg} for $p_0$.  Combining all of these facts, we have $q_0$ satisfies \eqref{bposcond} and, hence,
$q\in \widehat{\mathscr P}_m^{+}(k,\ell)$.
\end{proof}

For the sake of convenience, we say the function $q$ in  \eqref{spec2} satisfies IP($k$). 
More precisely, if $p\in \widehat{\mathscr P}(k,\ell,X)$ for some linear space $X$ over $\C$, then we say $p$ satisfies IP($k$) if it has the form in \eqref{pzedef} with $\Im(\beta_k)=0$ for all power vector $\beta$ in \eqref{pzedef}. Note that $z_k$ must be the last varible in $z$ for $p(z,\zeta)$.
Let $p\in \widehat{\mathscr P}(k,\ell,X)$ satisfy IP($k$) amd $k'\ge k$, $\ell'\ge \ell$.
Then, with the embedding \eqref{PPemd}, $p$ also satisfies IP($k'$) as a function in $\widehat{\mathscr P}(k',\ell',X)$.

\begin{proof}[Proof of Theorem \ref{thm3}]
Because $Z_k\in \widehat{\mathcal P}_{m_*}^+(s_k,k-1) $, we have, thanks to Lemma \ref{convert},\ref{cv3},
\beq\label{ZtoZ}
Z_k (\widehat{\LL}_{s_k}(t),\zeta) 
= \mathscr Z_k (\widehat{\LL}_{s_k}(t),\zeta)  \text{ for some } \mathscr Z_k\in \widehat{\mathscr P}_{m_*}^+(s_k,k-1) 
\text{ that satisfies IP($s_k$).}
\eeq
Thus, $(\mathcal K,(s_k)_{k\in\mathcal K},(\mathscr Z_k)_{k\in\mathcal K})\in \mathscr U(m)$ as defined in Definition \ref{Zsys}.
Let $\mathscr Y_k$ and $\widehat{\mathscr Y}_k$ be defined by  \eqref{Y1}--\eqref{hatY} corresponding to $Z_k:=\mathscr Z_k$.
Then, thanks to \eqref{ZtoZ}, one can verify recursively that
\beq\label{YY}
\mathscr Y_k(t)=\mathcal Y_k(t) \text{ and }\widehat{\mathscr Y}_k(t)=\widehat{\mathcal Y}_k(t) \text{ for all }k\in\mathcal K.
\eeq

For each $n\in\N$, thanks to \eqref{pnreal}, \eqref{spec1} and \eqref{YY} we have 
\beqs
\widehat p_n(\widehat \LL_{M_n}(t),\widehat{\mathcal Y}_{\widetilde M_n}(t))=\widetilde p_n(\widehat \LL_{M_n}(t),\widehat{\mathcal Y}_{\widetilde M_n}(t))
=\widetilde p_n(\widehat \LL_{M_n}(t),\widehat{\mathscr Y}_{\widetilde M_n}(t)) 
\eeqs 
 for some  function
$\widetilde  p_n\in \widehat{\mathscr P}_{m_*}(M_n,\widetilde M_n,0,G_{\alpha,\sigma,\C},G_{\alpha,\sigma})$ that satisfies IP($M_n)$.
Then the asymptotic expansion \eqref{freal} in the sense of Definition \ref{realex} with the functions $(Z_k)_{k\in\mathcal K}$ is actually the asymptotic expansion 
\beqs
f(t)\sim \sum_{n=1}^\infty \widetilde p_n(\widehat{\LL}_{M_n}(t),\widehat{\mathscr Y}_{\widetilde M_n}(t)) L_{m_*}(t)^{-\mu_n} 
\eeqs
in the sense of Definition \ref{Hexpand} with the functions  $(\mathscr Z_k)_{k\in\mathcal K}$.

Let $u(t)$ be any Leray--Hopf weak solution of \eqref{fctnse}.
Applying Theorem \ref{mainthm}, we obtain the asymptotic expansion, in the sense of Definition \ref{Hexpand},
\beq\label{ureal3}
u(t)\sim \sum_{k=1}^\infty \widetilde q_n(\widehat{\LL}_{M_n}(t),\widehat{\mathscr Y}_{\widetilde M_n}(t)) L_{m_*}(t)^{-\mu_n} \text{ in $G_{\alpha+1-\rho,\sigma}$ for any $\rho\in(0,1)$},
\eeq
where $\widetilde q_n\in  \widehat{\mathscr P}_{m_*}(M_n,\widetilde M_n,0,G_{\alpha+1,\sigma,\C},G_{\alpha+1,\sigma})$  for all $n\in\N$.
Thanks to property \eqref{spec3} and \eqref{YY}, we have 
\beq \label{qq1} \widetilde q_n(\widehat{\LL}_{M_n}(t),\widehat{\mathscr Y}_{\widetilde M_n}(t)) 
=\widehat q_n (\widehat{\LL}_{M_n+1}(t),\widehat{\mathscr Y}_{\widetilde M_n}(t))
=\widehat q_n (\widehat{\LL}_{M_n+1}(t),\widehat{\mathcal Y}_{\widetilde M_n}(t))
\eeq 
 for some $\widehat q_n\in \widehat{\mathcal P}_{m_*}(M_n+1,\widetilde M_n,G_{\alpha+1,\sigma})$.
We claim that
\beq  \label{qnIP}
 \widetilde q_n \text{ satisfies IP($M_n$) for all $n\in\N$.}
 \eeq 
 We accept this claim momentarily. By the virtue of \eqref{spec4}, we have $\widehat q_n\in \mathcal P_{m_*}(M_n,\widetilde M_n,G_{\alpha+1,\sigma})$.
Therefore, \eqref{qq1} becomes 
$$ \widetilde q_k(\widehat{\LL}_{M_n}(t),\widehat{\mathscr Y}_{\widetilde M_n}(t)) =\widehat q_n (\widehat{\LL}_{M_n}(t),\widehat{\mathcal Y}_{\widetilde M_n}(t))\text{ with }\widehat q_n\in \widehat{\mathcal P}_{m_*}(M_n,\widetilde M_n,0,G_{\alpha+1,\sigma}).$$
Substituting this equation into the asymptotic expansion \eqref{ureal3}, we obtain the asymptotic expansion \eqref{ureal} in the sense of Definition \ref{realex}.

 We prove the claim \eqref{qnIP} now.  Let $W_k$, for $k\in\mathcal K$, be defined by \eqref{Wk} for $(Z_k)_{k\in\mathcal K}$ being replaced with $(\mathscr Z_k)_{k\in\mathcal K}$.
 Recall that the functions $\widetilde  q_n$ are constructed in Definition \ref{construct} corresponding to these functions $W_k$ and $\widetilde  p_n$.

 Since $\mathscr Z_k$ satisfies IP($s_k)$, so do $\mathcal R\mathscr Z_k$ and $\partial \mathscr Z_k/\partial \zeta_j$ for $1\le j\le k-1$. In particular, $W_1$ satisfies IP($s_1$). Assume $k\ge 1$, and, for all $1\le j\le k-1$, $W_j$ satisfies IP($s_j$).
For $1\le j\le k-1$, because $s_j\le s_k$, we also have each such $W_j$ satisfies IP($s_k$). This yields the product $W_j \partial \mathscr Z_k/\partial \zeta_j$ satisfies IP($s_k$), and, hence,
 $W_k=\mathcal  R\mathscr Z_k+\sum_{j=1}^{k-1}W_j \partial \mathscr Z_k/\partial \zeta_j$ satisfies IP($s_k$).
 By the Induction Principle, one has
 \beq\label{WIP}
 W_k \text{ satisfies IP($s_k$) for all $k\in\mathcal K$.}
 \eeq
 
 Since $\widetilde  p_1$ that satisfies IP($M_1)$, the function  $\widetilde  q_1=\mathcal Z_{A_\C}p_1$ also satisfies IP($M_1$). Assume $n\ge 1$ and $\widetilde  p_k$  satisfies IP($M_k$) for all $1\le k\le n-1$.
For  $1\le k\le n-1$, one has $M_n\ge M_k$, hence, 
\beq \label{qkIP}
\widetilde  q_k\text{   satisfies IP($M_n$). }
\eeq  
Consequently,
\beq\label{BIP}
B(\widetilde  q_m,\widetilde  q_j) \text{ satisfies IP($M_n$) for all $1\le m,j\le n-1$.}
\eeq
 
If $\chi_n=0$ then it satisfies  IP($M_n$). Consider $m_*=0$ and $\chi_n$ is given by the first formula in \eqref{chin} with $\widetilde  q_\lambda$ in the place of $q_\lambda$.
Let $\lambda$ be as in \eqref{chin} and $1\le j\le \widetilde M_\lambda$. 
By \eqref{qkIP} with $k=\lambda$, we have $\mathcal R\widetilde  q_\lambda$ and $\partial \widetilde  q_\lambda/\partial \zeta_j$  
satisfy  IP($M_n$). 
Also, $s_j\le s_{\widetilde M_\lambda}\le s_{\widetilde M_n}\le M_n$, then it follows \eqref{WIP} that
$W_j$ satisfies IP($M_n$). 
Therefore, 
$\chi_n=
\mathcal R \widetilde q_\lambda+\sum_{j=1}^{\widetilde M_\lambda} W_j\frac{\partial \widetilde q_\lambda}{\partial \zeta_j} $ 
satisfies IP($M_n$).

Combining the fact $\widetilde  p_n$, $\chi_n$ satisfing IP($M_n)$ with property \eqref{BIP}, we have
$$
Q_n\eqdef \widetilde p_n - \sum_{\substack{1\le m,j\le n-1,\\ \mu_m+\mu_j=\mu_n}}B_\C(\widetilde q_m,\widetilde q_j) - \chi_n \text{   satisfies IP($M_n$). }
$$
Therefore, $\widetilde q_n=\mathcal Z_{A_\C}Q_n$ satisfies IP($M_n$). 
By the Induction Principle, we obtain \eqref{qnIP}.
\end{proof}

\medskip
\noindent\textbf{Statements and Declarations.} 
The author received no funds or  grants during the preparation of this manuscript. 
This paper contains no data and there are no conflicts of interests.

\bibliography{paperbaseall}{}
\bibliographystyle{abbrv}

\end{document}